\documentclass[12pt]{article}
\usepackage{geometry}                % See geometry.pdf to learn the layout options. There are lots.
\geometry{letterpaper}                   % ... or a4paper or a5paper or ... 
\usepackage[parfill]{parskip}    % Activate to begin paragraphs with an empty line rather than an indent
\usepackage{graphicx}
\usepackage{amssymb}
\usepackage{epstopdf}
\usepackage{yfonts}
\usepackage{bbold}
\usepackage{comment}
\usepackage{amsmath,amscd}
\usepackage{amsthm}
\usepackage{mathrsfs}

\title{$\bb{C}$-Graded Vertex Algebras and Conformal Flow}
%\date{7/27/13}%                                           % Activate to display a given date or no date
\author{Rob Laber and Geoffrey Mason\thanks{Supported by NSF}\\
Department of Mathematics, UC Santa Cruz}
\date{}
\newcommand{\bb}[1]{\mathbb{#1}}
\newcommand{\mf}[1]{\mathfrak{#1}}
\newcommand{\wt}[1]{\overline{|#1|}}

\newtheorem{thm}{Theorem}[section]
\newtheorem{prop}[thm]{Proposition}
\newtheorem{lem}[thm]{Lemma}
\newtheorem{cor}[thm]{Corollary}

\theoremstyle{definition}
\newtheorem*{definition*}{Definition}

\theoremstyle{theorem}
\newtheorem*{thm*}{Theorem}

\theoremstyle{definition}
\newtheorem*{remark*}{Remark}

\bigskip
\begin{document}

\maketitle

\begin{abstract}
\noindent
We consider \emph{$\mathbb{C}$-graded vertex algebras}, which are vertex algebras $V$ with a $\mathbb{C}$-grading such that $V$ is an admissible
$V$-module generated by  `lowest weight vectors'.\ We show that such vertex algebras have a `good' representation theory in the sense that there is a Zhu algebra $A(V)$ and a bijection between simple admissible $V$-modules and simple $A(V)$-modules.\ We also consider \emph{pseudo vertex operator algebras}, which are $\mathbb{C}$-graded vertex algebras with a conformal vector such that the homogeneous subspaces of $V$ are generalized 
eigenspaces for $L(0)$; essentially, these are VOAs that lack any semisimplicity or integrality assumptions  on $L(0)$.\ As a motivating example, we show that deformation of the conformal structure (conformal flow) of a strongly regular VOA (eg a lattice theory, or WZW model) is a path
in a space whose points are PVOAs.\\
MSC(2012): 17B69.
\end{abstract}

\section{Introduction}

The motivation for the results in this paper arise from several related sources.\ The first concerns
\emph{conformal flow}, or \emph{conformal deformation}.\ If $V=(V, Y, \mathbf{1}, \omega)$ is a VOA then, as is well-known,
one can often \emph{deform} $\omega$ to obtain another conformal vector $\omega'$ so that
$V'=(V, Y, \mathbf{1}, \omega')$ is again a VOA.\ Indeed, $\omega$ and $\omega'$ may be considered
as the endpoints
of a continuous path in $\mathbb{C}$ in which intermediate points correspond to
vectors $\omega''\in V$ which are conformal in the sense that the modes of $Y(\omega'', z)$
generate an action of the Virasoro algebra on $V$ (the central charge varies with $\omega''$),
but which in general do not confer the structure of a VOA on $(V, Y, \mathbf{1}, \omega'')$.\ It is thus natural to ask exactly what sort of a structure does $(V, Y, \mathbf{1}, \omega'')$ support in general?\
We will show that for nice enough VOAs, i.e., strongly regular VOAs (technical definitions will be given below) each intermediate $(V, Y, \mathbf{1}, \omega'')$ is what we call a \emph{pseudo vertex operator algebra} (PVOA).\ In this way, conformal flow becomes
a path in a space of PVOAs, all of which come from the same mother vertex algebra.

\medskip
A second motivation comes from the study of $C_2$-cofinite, or logarithmic vertex operator algebras.\ These are VOAs $V$ in the usual sense, however $L(0)$ 
does not necessarily act semisimply on $V$-modules.\ Interesting examples such as the triplet algebras   
(e.g., \cite{AM}) arise as  subspaces of a conformal deformation of a lattice theory, i.e.,  subalgebras of a PVOA.\ Given the importance of these objects, it is natural to relax the axioms so that $L(0)$ is not required to act semisimply on any module, even $V$ itself.\ This leads to our notion of $\mathbb{C}$-graded vertex algebra, which is formulated so that it puts no requirements on the conformal vector 
(including existence!) yet has a good representation theory exemplified by its Zhu algebra.\ Having said this, we emphasize that the relationship between our $\mathbb{C}$-graded vertex algebras and other kinds of theories remains unclear to us.

\medskip
Theories resulting from conformal deformation of lattice VOAs were studied systematically in
\cite{DM}, under the name of \emph{shifted} VOAs.\ They were
more-or-less shown to be PVOAs, but by an argument that does not generalize.\  Conformal deformation was also used in \cite{L} as a way to classify conformal vectors.
The desire to put these results into a more general and theoretical context 
provided additional motivation.\ 

\medskip
The paper is organized as follows.\ In Section 2 we introduce
$\mathbb{C}$-graded vertex algebras and their modules.\ In Section 3 we discuss the Zhu algebra 
$A(V)$ of a $\mathbb{C}$-graded vertex algebra $V$ and establish a bijection between simple 
admissible $V$-modules and simple modules over $A(V)$.\ This follows the general lines of argument
in \cite{DLM2}, \cite{Z}, but with notable differences resulting from our weaker axiomatic set-up.
In Section 4 we introduce PVOAs and prove  that  conformal deformations
of strongly regular VOAs are PVOAs.\ We also show that the Zhu algebra of any shifted lattice theory is semisimple.

\section{$\bb{C}$-Graded Vertex Algebras}

\subsection{Notation and Definitions}

\begin{definition*} A \emph{vertex algebra} is a quadruple $V=(V,Y,\bb{1}, T)$ consisting of a vector space $V$, a linear map
\begin{eqnarray*}Y:V&\to& End(V)[[z,z^{-1}]]\\
a&\mapsto&Y(a,z) = \sum_{n\in\bb{Z}}a(n)\;z^{-n-1},
\end{eqnarray*} a distinguished vector $\bb{1}\in V$ called the \emph{vacuum vector}, and an operator $T\in End(V)$ with the property that 
\[ [T,Y(a,z)] = \frac{\partial}{\partial z} Y(a,z) = Y(Ta, z).\]  There are various additional requirements on this data, and we refer  to \cite{LL} or \cite{K} for more details.\ We often simply refer to a vertex algebra as $V$. \end{definition*}

\bigskip
\begin{definition*} A vertex algebra $V$ is said to be $\bb{C}$-\emph{graded} if $V$ satisfies the following two properties:

\quad i) $V$ is a direct sum 
\[V=\bigoplus_{\mu\in\bb{C}}V_{\mu},\]
such that for any homogeneous element $a\in V_{\lambda}$, one has
\begin{equation}\label{cgraded}
a(n)V_{\mu}\subseteq V_{\mu+\lambda -n-1}.\end{equation}

\quad (ii) $V$ is generated by a set of \emph{lowest weight vectors}, where a lowest weight vector is a homogeneous vector $v\in V_{\mu}$ that satisfies the following:  For any $a\in V_{\lambda}$, if $a(n)v\neq 0$, then either $n=\lambda -1$ or $n<Re(\lambda) -1$.

This completes the definition.
\end{definition*}
\bigskip

An element $a\in V_{\mu}$ is said to have \emph{weight} $\mu$, and we denote this by $|a| = \mu$.  We define the operator $L\in End(V)$ as the linear extension of the map\begin{eqnarray*}
V_{\mu}&\to&V_{\mu}\\
a&\mapsto& \mu a = |a|a.
\end{eqnarray*}

As a straightforward application of the creativity axioms (see \cite{K} or \cite{FLM}) and formula \eqref{cgraded}, we have:
\bigskip
\begin{prop} Let $V=\bigoplus_{\mu\in\bb{C}}V_{\mu}$ be a $\bb{C}$-graded vertex algebra.  Then

\quad (i) $\bb{1}\in V_0$.

\quad (ii) If $a\in V_{\mu}$ then $Ta\in V_{\mu +1}$.\end{prop}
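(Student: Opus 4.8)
The plan is to prove both parts by applying the standard vertex algebra axioms together with the grading compatibility condition \eqref{cgraded}. Let me sketch each part.

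For part (i), I would use the vacuum and creativity axioms. The key facts I need are that $Y(\mathbf{1},z) = \mathrm{id}_V$ and that $T\mathbf{1} = 0$. Since $\mathbf{1}$ lies in $V = \bigoplus_\mu V_\mu$, I can write $\mathbf{1} = \sum_\mu \mathbf{1}_\mu$ with $\mathbf{1}_\mu \in V_\mu$. First I would observe that $\mathbf{1}(n) = \delta_{n,-1}\,\mathrm{id}_V$ from $Y(\mathbf{1},z)=\mathrm{id}_V$, so in particular $\mathbf{1}(-1) = \mathrm{id}_V$ acts as the identity. The natural approach is to compare weights: applying the grading axiom to each homogeneous component $\mathbf{1}_\mu$ and using $\mathbf{1}(-1)b = b$ for any homogeneous $b$, the component $\mathbf{1}_\mu(-1)$ shifts weight by $\mu$ (taking $n=-1$, $\lambda=\mu$ in \eqref{cgraded}), so it maps $V_\nu$ into $V_{\nu+\mu}$. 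For the sum of these to act as the identity on each $V_\nu$, the weight-preserving part must be $\mathbf{1}_0(-1)$ and all other $\mathbf{1}_\mu(-1)$ with $\mu\neq 0$ must vanish, forcing $\mathbf{1}=\mathbf{1}_0 \in V_0$.

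For part (ii), I would use the intertwining relation $[T,Y(a,z)] = Y(Ta,z)$ from the vertex algebra definition, specialized via the vacuum. Applying the fundamental relation $Ta = a(-2)\mathbf{1}$ (equivalently, $T = $ the $(-2)$-mode against the vacuum, which follows from the creativity and translation axioms), and now using part (i) that $\mathbf{1}\in V_0$, I can read off the weight directly from \eqref{cgraded}: with $a\in V_\lambda=V_\mu$, the mode $a(-2)$ applied to $\mathbf{1}\in V_0$ lands in $V_{0+\mu-(-2)-1} = V_{\mu+1}$. Thus $Ta = a(-2)\mathbf{1} \in V_{\mu+1}$, which is exactly the claim.

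The main obstacle I anticipate is part (i): establishing $\mathbf{1}\in V_0$ rigorously requires knowing precisely which auxiliary axioms of a vertex algebra are in force, since the excerpt defers them to \cite{LL}, \cite{K}. The delicate point is justifying that $\mathbf{1}(-1)=\mathrm{id}$ and that $\mathbf{1}$ is homogeneous; the cleanest route is the weight-comparison argument above, but one must be careful that the grading is by all of $\mathbb{C}$ (not just integers or reals), so the projection/component bookkeeping cannot rely on any discreteness of weights. Once $\mathbf{1}\in V_0$ is secured, part (ii) is a direct and routine computation from \eqref{cgraded} with $n=-2$, as indicated. I would present (i) first and then invoke it in (ii).
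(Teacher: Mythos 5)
Your proof is correct and takes essentially the same route the paper intends: the paper gives no written argument at all, stating only that the proposition is ``a straightforward application of the creativity axioms and formula \eqref{cgraded}'', which is precisely your computation, including the identity $Ta = a(-2)\bb{1}$ that the paper itself invokes later (proof of Lemma 3.2). The one step you leave implicit in (i) --- passing from $\bb{1}_\mu(-1)=0$ as an operator to $\bb{1}_\mu=0$ as a vector --- closes instantly via the creativity identity $\bb{1}_\mu(-1)\bb{1}=\bb{1}_\mu$, i.e.\ evaluating the vanishing operator on the vacuum, so no discreteness of the $\bb{C}$-grading is ever needed.
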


\subsection{Modules}

Here we define the notion of module over a $\bb{C}$-graded vertex algebra $V$.  Since $V$ is a vertex algebra, we already have the notion of \emph{weak} $V$-\emph{module} (see \cite{LL}).
\bigskip
\begin{definition*} An \emph{admissible} $V$-\emph{module} is a weak $V$-module $M$ with a grading of the form
\[M= M(0) \oplus\bigoplus_{\substack{\mu\in\bb{C}\\Re(\mu)>0}} M(\mu)\]
such that $M(0)\neq 0$ and for any homogeneous $a\in V_{\lambda}$, one has
\begin{equation}\label{admissible}
a(n)M(\mu)\subseteq M(\mu+\lambda -n-1).\end{equation}
This completes the definition.
\end{definition*}

We note that any $\bb{C}$-graded vertex algebra $V$ is an admissible module over itself.  Since $V$ is generated by its lowest weight vectors, every element $v$ in $V$ is a sum of elements of the form
\[a^r(n_r)a^{r-1}(n_{r-1}).. .a^1(n_1)w\]
where $a^i$ is homogeneous element and $w$ is a lowest weight vector.  We let $V(0)$ be the space of lowest weight vectors, and we say that \[deg\left(a^r(n_r)a^{r-1}(n_{r-1}).. .a^1(n_1)w\right) = \sum_{i=1}^r\left(|a^i| - n_i -1\right)\]
One can use the weak commutativity (\cite{LL}) and the fact that $w$ is a lowest weight vector to show that either \[Re\left(\sum_{i=1}^r\left(|a^i| - n_i -1\right)\right) > 0\]
or
\[\left(\sum_{i=1}^r\left(|a^i| - n_i -1\right)\right) =0. \]
Thus, we define $V(\lambda)$ to be the space of all $v\in V$ with $deg(v) = \lambda$, and this gives $V$ the structure of an admissible $V$-module.

\section{The Zhu Algebra $A(V)$}

Throughout this section we let $V=(V,Y,\bb{1},T)$ be a $\bb{C}$-graded vertex algebra.  Here we fix some notation.  For $a\in V_{\mu}$, we let $\wt{a}$ denote the ceiling of the real part of $\mu$, ie,
\[ \wt{a} := min\{\;n\in\bb{Z}\;|\;n\geq Re(\mu)\;\}.\]
 Of course, $La = \wt{a}a$ if and only if $\mu\in\bb{Z}$, or equivalently, if and only if $|a| = \wt{a}$.  Let $V^r$ be the set of all elements $a\in V$ with $r= |a| -\wt{a}$. Note that $V = \bigoplus_{r\in\bb{C}}V^r$.

\subsection{Construction of $A(V)$}

\begin{definition*} Let $a\in V^r$.  Define the circle and star products on $V$ as the linear extensions of the following:
\[a\circ b := Res_z\dfrac{(1+z)^{\wt{a} +\delta_{r,0} -1}}{z^{1+\delta_{r,0}}}Y(a,z)b \]
and
\[a\star b := \delta_{r,0}Res_z\dfrac{(1+z)^{|a|}}{z}Y(a,z)b. \]

We define $O(V)$ to be the linear span of all element of the form $a\circ b$ for $a,b\in V$.
\end{definition*}

\bigskip
\begin{lem} If $r\neq 0$, then $V^r\subseteq O(V)$.
\end{lem}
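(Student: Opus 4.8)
The plan is to prove the sharper and cleaner statement that every homogeneous $a\in V^r$ with $r\neq 0$ satisfies $a = a\circ\bb{1}$. Since $O(V)$ is by definition the linear span of all circle products $a\circ b$, this immediately places $a$ in $O(V)$, and then $V^r\subseteq O(V)$ follows by linearity.

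First I would compute $a\circ\bb{1}$ directly from the definition. Because $r\neq 0$ we have $\delta_{r,0}=0$, so
\[ a\circ\bb{1} = Res_z\frac{(1+z)^{\wt{a}-1}}{z}Y(a,z)\bb{1}. \]
By the creation (creativity) axiom, $Y(a,z)\bb{1} = e^{zT}a = \sum_{n\geq 0}\frac{z^n}{n!}T^n a$, a power series in $z$ whose constant term is $a$. Taking the residue amounts to extracting the coefficient of $z^{0}$ in $(1+z)^{\wt{a}-1}Y(a,z)\bb{1}$. Since $\wt{a}\in\bb{Z}$, the factor $(1+z)^{\wt{a}-1}=\sum_{k\geq 0}\binom{\wt{a}-1}{k}z^k$ is a power series whose constant term is $1$; multiplying it by the constant-term-$a$ series $Y(a,z)\bb{1}$ and reading off the $z^{0}$ coefficient leaves only the $k=n=0$ contribution, namely $a$. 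Hence $a\circ\bb{1}=a$, which finishes the argument.

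There is essentially no deep obstacle here, but two points deserve care, and they are precisely where the hypotheses enter. The computation relies on the exponent $\wt{a}$ being an integer, so that $(1+z)^{\wt{a}-1}$ has a genuine constant term $1$ regardless of the non-integrality of $|a|$; this is guaranteed by the definition $\wt{a}=\min\{n\in\bb{Z}\mid n\geq Re(|a|)\}$. It also relies crucially on $r\neq 0$: when $r=0$ one has $\delta_{r,0}=1$, the denominator becomes $z^{2}$ and the numerator exponent becomes $\wt{a}$, so the same residue computation instead yields $a\circ\bb{1}=(\wt{a}+T)a$ rather than $a$. Thus the statement genuinely excludes $r=0$, and the Kronecker delta built into the circle product is exactly the device that makes circle-multiplication by the vacuum act as the identity on each $V^r$ with $r\neq 0$.
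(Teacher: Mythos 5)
Your proof is correct and follows exactly the paper's route: the paper likewise establishes $a = a\circ\bb{1}\in O(V)$ via the residue formula, merely omitting the expansion of $Y(a,z)\bb{1}=e^{zT}a$ that you spell out. Your added remark that $r=0$ would instead give $a\circ\bb{1}=(\wt{a}+T)a$ is accurate and consistent with the paper's subsequent lemma on $(T+L)a\in O(V)$.
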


\begin{proof}   Let $a\in V^r$.  Then
\[a = Res_z\dfrac{(1+z)^{\wt{a} -1}}{z}Y(a,z)\bb{1} = a\circ \bb{1} \in O(V).\]\end{proof}
\bigskip
\begin{lem} For any homogeneous $a\in V$, one has
\begin{equation}
(T+L)a \equiv 0\quad\quad mod\;O(V).\label{modov}
\end{equation}

In particular, $Ta\equiv -La\quad (mod\;O(V))$.\end{lem}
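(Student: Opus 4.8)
The plan is to produce $(T+L)a$ as an explicit circle product, the natural candidate being $a\circ\bb{1}$. The single external input I need is the creation axiom (one of the ``additional requirements'' on a vertex algebra), namely $Y(a,z)\bb{1}=e^{zT}a=\sum_{k\ge 0}\frac{z^{k}}{k!}T^{k}a$ for homogeneous $a$. Since this evaluation behaves differently according to the value of $\delta_{r,0}$, I would split into the cases $r\ne 0$ and $r=0$, where $a\in V^r$.

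The case $r\ne 0$ is immediate from the preceding Lemma. There $a\in V^r\subseteq O(V)$, so $La=|a|a\in O(V)$ since it is a scalar multiple of $a$. For the $T$-term, the Proposition gives $Ta\in V_{\mu+1}$ with $\mu=|a|$, and because $Re(\mu+1)=Re(\mu)+1$ we have $\wt{Ta}=\wt{a}+1$; hence $|Ta|-\wt{Ta}=(\mu+1)-(\wt{a}+1)=r\ne 0$, so $Ta\in V^r\subseteq O(V)$ by the same Lemma. Thus $(T+L)a\in O(V)$.

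For the main case $r=0$ we have $\delta_{r,0}=1$ and $N:=|a|=\wt{a}\in\bb{Z}$, so $La=Na$. Here I would compute
\[
a\circ\bb{1}=Res_z\frac{(1+z)^{N}}{z^{2}}Y(a,z)\bb{1}=Res_z\frac{(1+z)^{N}}{z^{2}}\sum_{k\ge 0}\frac{z^{k}}{k!}T^{k}a,
\]
so that the residue is the coefficient of $z$ in $(1+z)^{N}\sum_{k\ge 0}\frac{z^{k}}{k!}T^{k}a$. Only $k=0$ (giving $\binom{N}{1}a=Na$) and $k=1$ (giving $Ta$) contribute, whence $a\circ\bb{1}=Na+Ta=(T+L)a$. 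Since $a\circ\bb{1}\in O(V)$ by definition, this is the assertion, and the displayed ``in particular'' is the trivial rearrangement $Ta\equiv -La$.

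I do not expect a genuine obstacle, as both computations are short; the two points needing care are the weight bookkeeping $\wt{Ta}=\wt{a}+1$ that keeps the $r\ne 0$ case inside a single $V^r$, and the correct identification of the contributing powers of $z$ in the residue (a miscount would introduce spurious $T^{k}a$ terms). Everything hinges on the creation axiom $Y(a,z)\bb{1}=e^{zT}a$, so I would first confirm that this identity is available under the present, weaker axioms before running the residue calculation.
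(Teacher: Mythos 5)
Your proof is correct and is essentially the paper's own argument: the paper likewise splits into the case $a\in V^r$, $r\neq 0$, handled by the preceding lemma $V^r\subseteq O(V)$, and the case $r=0$, where it writes $(T+L)a = Ta + |a|a = Res_z\frac{(1+z)^{|a|}}{z^2}Y(a,z)\bb{1} = a\circ\bb{1}\in O(V)$ using the creation axiom (in the form $Ta = a(-2)\bb{1}$), which is exactly your residue computation with $Y(a,z)\bb{1}=e^{zT}a$. Your explicit verification that $\wt{Ta}=\wt{a}+1$, so that both $Ta$ and $La$ lie in the same $V^r$, is a detail the paper leaves implicit but is the right bookkeeping.
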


\begin{proof} If $a\in V^r$ for some nonzero $r$, then the result follows because both sides of \eqref{modov} are in $V^r\subseteq O(V)$.  If $a\in V^0$, then this follows immediately from
\[(T+L)a  = Ta + |a|a = Res_z\dfrac{(1+z)^{|a|}}{z^2}Y(a,z)\bb{1}  \in O(V),\]
where we use the fact that $Ta = a(-2)\bb{1}$.\end{proof}

\bigskip
\begin{prop} For any $a\in V^r$ and any $m\geq n\geq 0$, we have 
\begin{equation}Res_z \dfrac{(1+z)^{\wt{a} +\delta_{r,0} -1+n}}{z^{1+\delta_{r,0} +m}} Y(a,z)b \;\in O(V).\label{resainVr}\end{equation}
\end{prop}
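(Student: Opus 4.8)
The plan is to prove the statement by first disposing of the index $n$ and then running an induction on $m$ that is quantified over \emph{all} homogeneous $a$ at once. Throughout, write $\delta=\delta_{r,0}$ and, for homogeneous $a$ and integers $m\geq n\geq 0$, abbreviate
\[ F_a(n,m) := Res_z\,\frac{(1+z)^{\wt{a}+\delta-1+n}}{z^{1+\delta+m}}\,Y(a,z)b, \]
so that the claim is exactly $F_a(n,m)\in O(V)$, while by the definition of the circle product $F_a(0,0)=a\circ b\in O(V)$.

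First I would reduce to the case $n=0$. Applying the elementary identity $(1+z)^{e}=(1+z)^{e-1}+z\,(1+z)^{e-1}$ to the numerator yields the recursion $F_a(n,m)=F_a(n-1,m)+F_a(n-1,m-1)$, and iterating gives $F_a(n,m)=\sum_{k=0}^{n}\binom{n}{k}F_a(0,m-k)$. Since $m\geq n$, each index occurring here satisfies $m-k\geq m-n\geq 0$, so it suffices to prove that $F_a(0,m)\in O(V)$ for all $m\geq 0$ and all homogeneous $a$.

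The heart of the argument is this last claim, which I would prove by induction on $m$ with inductive statement $Q(m)$: ``$F_a(0,m)\in O(V)$ for every homogeneous $a$ and every $b$''. The base case $Q(0)$ is the identity $F_a(0,0)=a\circ b$. For the inductive step I would use $Y(Ta,z)=\partial_z Y(a,z)$ together with the vanishing of the residue of a total $z$-derivative: expanding
\[ 0=Res_z\,\partial_z\!\left[\frac{(1+z)^{\wt{a}+\delta}}{z^{1+\delta+m}}\,Y(a,z)b\right] \]
and collecting terms (using the same binomial identity once more on one of them) produces a linear relation of the shape $(\wt{a}-1-m)F_a(0,m)-(1+\delta+m)F_a(0,m+1)+F_{Ta}(0,m)=0$. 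Here I would invoke that $Ta$ is again homogeneous, lies in the \emph{same} $V^r$ (so $\delta$ is unchanged) with $\wt{Ta}=\wt{a}+1$, which is precisely what identifies the derivative term as $F_{Ta}(0,m)$. Solving for $F_a(0,m+1)$ and noting the coefficient $1+\delta+m$ is nonzero, I obtain $F_a(0,m+1)\in O(V)$ from $F_a(0,m)\in O(V)$ and $F_{Ta}(0,m)\in O(V)$.

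The step I expect to be the real obstacle is the apparent circularity here: the relation expresses $F_a(0,m+1)$ not only through $F_a(0,m)$ but also through $F_{Ta}(0,m)$, which sits at the \emph{same} level $m$ rather than a strictly lower one, so a naive induction on $m$ for a single fixed $a$ does not close. The resolution is exactly to phrase $Q(m)$ with $a$ universally quantified, so that $F_{Ta}(0,m)\in O(V)$ is furnished by $Q(m)$ applied to the vector $Ta$; then $Q(m)\Rightarrow Q(m+1)$ genuinely. I would also record the minor technical checks that legitimize the computation: since $\wt{a}+\delta\in\bb{Z}$, the bracketed expression lies in $V((z))$, so the residue-of-a-derivative identity is valid, and both the binomial recursion and the integration by parts are purely formal manipulations with no convergence concerns.
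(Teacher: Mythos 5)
Your proof is correct and is essentially the paper's own argument: the paper simply cites Zhu's Lemma 2.1.2 with $L(-1)$ replaced by $T$, and your binomial reduction in $n$ followed by induction on $m$ (quantified over all homogeneous $a$, using $Y(Ta,z)=\partial_z Y(a,z)$ and the vanishing residue of a total derivative) is exactly that proof, correctly adapted to the $\delta_{r,0}$-shifted exponents --- including the key observation that $Ta$ stays in the same $V^r$ with $\wt{Ta}=\wt{a}+1$, so $\delta$ is unchanged and the derivative term is identified as $F_{Ta}(0,m)$.
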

\begin{proof} The proof is the same as \cite{Z} Lemma 2.1.2 if one replaces $L(-1)$ by $T$.\end{proof}

\bigskip
\begin{lem} Let $a,b\in V$ be homogeneous elements.  Then 
\begin{equation}
Y(a,z)b\equiv (1+z)^{-|a|-|b|}Y(b,\frac{-z}{1+z})a\quad mod\;O(V).\label{commlemma}
\end{equation}
\end{lem}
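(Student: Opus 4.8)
The plan is to derive the congruence directly from the vertex-algebra skew-symmetry relation, together with the lemma giving $Ta\equiv -La \pmod{O(V)}$, by reducing everything to a single auxiliary fact about how $e^{zT}$ acts on homogeneous vectors modulo $O(V)$. First I would invoke skew-symmetry in the form $Y(a,z)b = e^{zT}Y(b,-z)a$, which holds in any vertex algebra with $T$ in the role of $L(-1)$. Expanding $Y(b,-z)a = \sum_n b(n)a\,(-z)^{-n-1}$ and recalling from the grading axiom \eqref{cgraded} that each coefficient $b(n)a$ is homogeneous of weight $|a|+|b|-n-1$, the problem reduces to understanding $e^{zT}v \pmod{O(V)}$ for a single homogeneous $v$ of weight $\mu$.

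The auxiliary claim I would prove is
\[
e^{zT}v \equiv (1+z)^{-\mu}\,v \pmod{O(V)}, \qquad v\in V_\mu,
\]
interpreted coefficient-by-coefficient in $z$, with $(1+z)^{-\mu}$ expanded as a formal binomial series. Comparing the coefficient of $z^j$ reduces this to $T^j v \equiv (-1)^j\,\mu(\mu+1)\cdots(\mu+j-1)\,v \pmod{O(V)}$, which I would establish by induction on $j$. The base case $j=1$ is exactly $Tv\equiv -Lv = -\mu v$. The essential point — and the place where a naive argument fails — is that $T$ \emph{raises} weight by one (Proposition, part (ii)), so $T^j v$ is genuinely homogeneous of weight $\mu+j$; applying the lemma to it gives $T^{j+1}v \equiv -(\mu+j)\,T^j v$, and substituting the inductive value (legitimate since $O(V)$ is a subspace closed under scalar multiplication) yields the next rising-factorial coefficient. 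This is precisely why one obtains $(1+z)^{-\mu}$ rather than the $e^{-\mu z}$ that a careless replacement of $T$ by $-L$ inside the exponential would suggest.

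Finally I would assemble the pieces. Applying the auxiliary claim to $v=b(n)a$, of weight $|a|+|b|-n-1$, gives
\[
Y(a,z)b \;\equiv\; \sum_n (1+z)^{-(|a|+|b|-n-1)}\,b(n)a\,(-z)^{-n-1} \pmod{O(V)},
\]
and factoring out $(1+z)^{-|a|-|b|}$ while using $(1+z)^{n+1}(-z)^{-n-1} = \big(\tfrac{-z}{1+z}\big)^{-n-1}$ collects the residual sum back into $Y\big(b,\tfrac{-z}{1+z}\big)a$, producing the stated identity \eqref{commlemma}.

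The main obstacle is bookkeeping rather than conceptual depth: one must treat every expression as a formal Laurent series in $z$, bounded below in degree because $b(n)a=0$ for $n\gg 0$, and verify that the term-by-term passage modulo $O(V)$ followed by resummation is valid — which it is, since each power of $z$ receives contributions from only finitely many indices $n$. One must also fix, once and for all, that $(1+z)^{-\mu}$ and the substitution $z\mapsto\tfrac{-z}{1+z}$ are to be read as formal binomial expansions, so that the two sides are compared coefficient-wise in a single consistent completion.
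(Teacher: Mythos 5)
Your proof is correct and follows essentially the same route as the paper: skew-symmetry $Y(a,z)b=e^{zT}Y(b,-z)a$ followed by repeated application of $Ta\equiv -La \pmod{O(V)}$ to the weight-homogeneous vectors $T^{j}b(n)a$ (using that $T$ raises weight by one), producing the rising factorial that resums to the binomial series $(1+z)^{-\mu}$. The only difference is presentational: you isolate $e^{zT}v\equiv(1+z)^{-|v|}v \pmod{O(V)}$ as a standalone auxiliary claim proved by induction on the power of $T$, whereas the paper carries out the identical computation inline.
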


\begin{proof} This result was proved in \cite{Z} during the proof of Lemma 2.1.3, but we provide additional details here.  Using the skew symmetry, we have
\begin{eqnarray*}
Y(a,z)b &=& e^{zT}Y(b,-z)a\\
&=& \sum_{i\in\bb{Z}}e^{zT}b(i)a\;z^{-i-1}\\
&=&\sum_{i\in\bb{Z}}\sum_{j\geq 0} \frac{z^jT^j}{j!}b(i)a\;z^{-i-1}.
\end{eqnarray*}

Recall that $La +Ta\in O(V)$ for any $a\in V$, so we have the congruence $(mod\;O(V))$
\begin{eqnarray*}
 \frac{z^jT^j}{j!}b(i)a &=& \frac{z^j}{j!} (-L)T^{j-1}b(i)a\\
 &\equiv &(-1)\frac{z^j}{j!}(|a|+|b|-i-1+j-1)T^{j-1}b(i)a\\
 &\equiv &(-1)\frac{z^j}{j!}(|a|+|b|-i-1+j-1)(-L)T^{j-2}b(i)a\\
&\equiv & (-1)^j \frac{z^j}{j!}(|a|+|b|-i-1+j-1)\dots (|a|+|b|-i-1)b(i)a.
\end{eqnarray*}

Returning to the previous calculation, we have $(mod\;O(V))$,
\begin{eqnarray*}
Y(a,z)b &\equiv &\sum_{i\in\bb{Z}}\sum_{j\geq 0}  (-1)^j \frac{z^j}{j!}(|a|+|b|-i-1+j-1)\dots (|a|+|b|-i-1)b(i)a\;z^{-i-1}\\
& = & \sum_{i\in\bb{Z}}(-1)^{i+1} b(i)a\;(1+z)^{-|a|-|b|+i+1}z^{-i-1}\\
&=& (1+z)^{-|a|-|b|}Y(b,\frac{-z}{1+z})a.
\end{eqnarray*}\end{proof}

\begin{lem} If $a$ and $b$ are homogeneous elements in $V^0$, then we have the identities
\begin{equation}
a\star b \equiv Res_z\dfrac{(1+z)^{\wt{b}-1}}{z}Y(b,z)a\quad mod\;O(V)\label{comm1}
\end{equation}
and
\begin{equation}
a\star b -b\star a \equiv Res_z\;(1+z)^{\wt{a}-1}Y(a,z)b\quad mod\;O(V).\label{comm2}
\end{equation}
\end{lem}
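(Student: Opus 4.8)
The plan is to derive \eqref{comm1} from the commutativity congruence \eqref{commlemma} together with a change of variables in the formal residue, and then to obtain \eqref{comm2} as a nearly immediate consequence of \eqref{comm1}. Throughout I use that for $a,b\in V^0$ one has $\delta_{r,0}=1$ and $|a|=\wt{a}$, $|b|=\wt{b}$, so that the star product simplifies to $a\star b=Res_z\frac{(1+z)^{\wt{a}}}{z}Y(a,z)b$. Because $O(V)$ is a linear subspace and all the operations below (multiplying by a fixed rational function of $z$, extracting a residue) are linear and applied coefficientwise, every congruence modulo $O(V)$ is preserved under these manipulations.

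For \eqref{comm1}, I would first apply \eqref{commlemma} inside the residue defining $a\star b$. Multiplying the congruence $Y(a,z)b\equiv(1+z)^{-|a|-|b|}Y(b,\frac{-z}{1+z})a$ by $\frac{(1+z)^{|a|}}{z}$ and taking $Res_z$ gives
\[a\star b\equiv Res_z\frac{(1+z)^{-|b|}}{z}Y\!\left(b,\tfrac{-z}{1+z}\right)a\quad mod\;O(V),\]
since the factors $(1+z)^{|a|}$ and $(1+z)^{-|a|}$ cancel. The next step is the substitution $w=\frac{-z}{1+z}$, which is an involution with inverse $z=\frac{-w}{1+w}$; here $1+z=\frac{1}{1+w}$, $\frac{1}{z}=\frac{1+w}{-w}$, and the formal Jacobian is $\frac{dz}{dw}=\frac{-1}{(1+w)^2}$. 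A short computation shows $\frac{(1+z)^{-|b|}}{z}\,dz=\frac{(1+w)^{|b|-1}}{w}\,dw$, so the residue transforms into $Res_w\frac{(1+w)^{|b|-1}}{w}Y(b,w)a$. Renaming $w$ back to $z$ and using $|b|=\wt{b}$ yields \eqref{comm1}.

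For \eqref{comm2}, I would apply \eqref{comm1} with the roles of $a$ and $b$ interchanged (legitimate since both lie in $V^0$) to get $b\star a\equiv Res_z\frac{(1+z)^{\wt{a}-1}}{z}Y(a,z)b\pmod{O(V)}$. Subtracting this from $a\star b=Res_z\frac{(1+z)^{\wt{a}}}{z}Y(a,z)b$ and combining the two residues gives
\[a\star b-b\star a\equiv Res_z\frac{(1+z)^{\wt{a}-1}\big[(1+z)-1\big]}{z}Y(a,z)b\quad mod\;O(V),\]
and the bracket $(1+z)-1=z$ cancels the denominator, leaving $Res_z\,(1+z)^{\wt{a}-1}Y(a,z)b$, which is \eqref{comm2}.

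The step I expect to require the most care is the change of variables in the residue: one must correctly bookkeep the Jacobian $\frac{dz}{dw}=-1/(1+w)^2$ and the sign coming from $\frac{1}{z}=\frac{1+w}{-w}$, and verify that these combine so that the factor $(1+z)^{-|b|}$ becomes exactly $(1+w)^{|b|-1}$. Everything else is formal bookkeeping; the only conceptual input beyond \eqref{commlemma} is the observation that membership in $V^0$ forces $|a|=\wt{a}$, which is what allows the exponents to line up cleanly in both identities.
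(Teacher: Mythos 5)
Your proposal is correct and follows essentially the same route as the paper: the paper's proof simply cites Zhu's Lemma 2.1.3 together with \eqref{commlemma}, and Zhu's argument is precisely your computation — apply \eqref{commlemma} inside the residue defining $a\star b$ and perform the substitution $w=\frac{-z}{1+z}$ with Jacobian $\frac{dz}{dw}=\frac{-1}{(1+w)^2}$, then obtain \eqref{comm2} by swapping $a$ and $b$ in \eqref{comm1} and subtracting. Your Jacobian and exponent bookkeeping check out (including the telescoping $(1+z)^{\wt{a}}-(1+z)^{\wt{a}-1}=z(1+z)^{\wt{a}-1}$), and your use of $|a|=\wt{a}$, $|b|=\wt{b}$ for elements of $V^0$ is exactly the point that makes the exponents align.
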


\begin{proof} See \cite{Z} Lemma 2.1.3, and use \eqref{commlemma}\end{proof}

\bigskip
\begin{prop} $O(V)$ is a left ideal of $V$ with respect to the star product $``\star "$.\end{prop}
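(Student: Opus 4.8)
The plan is to verify directly that $a \star x \in O(V)$ for every homogeneous $a \in V$ and every $x \in O(V)$. Since $\star$ is bilinear and $O(V)$ is by definition the linear span of the circle products, it suffices to show that $a \star (b \circ c) \in O(V)$ for arbitrary homogeneous $a,b,c \in V$. The case $a \in V^r$ with $r \neq 0$ is immediate, since the factor $\delta_{r,0}$ in the definition of $\star$ then forces $a \star (b \circ c) = 0 \in O(V)$. Hence I may assume $a \in V^0$, that is, $|a| = \wt{a} \in \bb{Z}$; in particular the exponent of the factor $(1+z)^{|a|}$ appearing in $a \star (\cdot)$ is an integer.

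Writing $b \in V^s$ and expanding both products, the quantity to be controlled is the iterated residue
$$a \star (b \circ c) = Res_{z}\,Res_{w}\; \frac{(1+z)^{|a|}}{z}\cdot\frac{(1+w)^{\wt{b}+\delta_{s,0}-1}}{w^{1+\delta_{s,0}}}\, Y(a,z)Y(b,w)c.$$
The engine of the argument is the associativity (iterate) formula of the vertex algebra $V$, which, after the change of variable $z = w + z_0$, rewrites $Y(a,z)Y(b,w)c$ in terms of the single vertex operators
$$Y(Y(a,z_0)b,\,w)c = \sum_{i} Y(a(i)b,\,w)c\; z_0^{-i-1}.$$
Substituting this into the residue and re-expanding the prefactor $(1+z)^{|a|}/z = (1+w+z_0)^{|a|}/(w+z_0)$ as a series in $z_0$ (in the region $|w| > |z_0|$ appropriate to the iterate) turns $a \star (b \circ c)$ into a sum of terms of the shape
$$Res_w\; \frac{(1+w)^{p}}{w^{q}}\, Y(u,w)c, \qquad u = a(i)b,$$
with integers $p,q$ determined by $|a|$, $\wt{b}$, $\delta_{s,0}$ and the index $i$.

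Each such single operator term is then absorbed into $O(V)$ by the inclusion \eqref{resainVr}: for every monomial produced one checks that the exponent of $(1+w)$ and the order of the pole at $w=0$ meet the hypotheses $m \geq n \geq 0$ of \eqref{resainVr}, read relative to the ceiling $\wt{u}$ of the weight $|u| = |a|+|b|-i-1$. The main obstacle is exactly this bookkeeping: one must track how $(1+z)^{|a|}$ and $z^{-1}$ transform under $z \mapsto w + z_0$ and its binomial re-expansion (including the region in which the expansion is taken), and verify that the weight shift carried by $a(i)b$ keeps the resulting exponents inside the admissible range of \eqref{resainVr}. The integrality $|a| = \wt{a}$ secured at the outset is what guarantees that $(1+z)^{|a|}$ expands with integer exponents and that the $\delta$-conventions line up; granting this, the computation follows the pattern of \cite{Z}, with $L(-1)$ replaced by $T$ and weights read through the ceiling $\wt{\cdot}$.
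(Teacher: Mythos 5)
There is a genuine gap at the central step: you rewrite the product $Y(a,z)Y(b,w)c$ as the iterate $Y(Y(a,z_0)b,w)c$ via ``the change of variable $z = w+z_0$,'' but this is not a valid formal-series identity. The product is the expansion of the relevant matrix coefficients in the domain $|z|>|w|$, while the iterate is the expansion in $|w|>|z_0|$; they agree only weakly, i.e.\ after multiplication by $(z-w)^k$ for sufficiently large $k$, and the residue $Res_z\,\frac{(1+z)^{|a|}}{z}(\cdots)$ you take provides no such factor. What the Jacobi identity actually yields is that the iterate-side residue equals the \emph{difference} of the two orderings:
\[
Res_wRes_{z-w}\,Y(Y(a,z-w)b,w)c\;F(z,w) \;=\; Res_zRes_w\,Y(a,z)Y(b,w)c\;F \;-\; Res_wRes_z\,Y(b,w)Y(a,z)c\;F,
\]
with $F = \frac{(1+z)^{|a|}}{z}\cdot\frac{(1+w)^{\wt{b}+\delta_{s,0}-1}}{w^{1+\delta_{s,0}}}$. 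Your substitute-and-re-expand computation (the re-expanded prefactor carries only nonnegative powers of $z_0$, so only modes $a(i+j)b$ with $i,j\geq 0$ survive the residue) therefore evaluates $a\star(b\circ c) - b\circ(a\star c)$, not $a\star(b\circ c)$ itself: the second ordering $Y(b,w)Y(a,z)c$ has been silently discarded.

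The omission is reparable, and this is exactly how the paper proceeds: the discarded term is precisely $b\circ(a\star c)$, which lies in $O(V)$ by the very definition of $O(V)$, so from $a\star(b\circ c)-b\circ(a\star c)\in O(V)$ one still concludes $a\star(b\circ c)\in O(V)$. But as written your proposal asserts a false identity and never identifies the correction term, so the argument does not stand on its own; the subtraction is the key idea, not a convenience. Two smaller points. First, the exponent bookkeeping you defer does close up: since $|a|\in\bb{Z}$, the element $a(i+j)b$ lies in the same $V^s$ as $b$, whence (for $s\neq 0$) $\wt{b}+|a|-1-i = \wt{a(i+j)b}+j$, and each resulting term matches \eqref{resainVr} with $m=n=j+1\geq 0$. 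Second, the paper does not attempt your uniform treatment of the case $b\in V^0$: there it quotes Zhu's Theorem 2.1.1 (whose proof runs on the same subtraction of $b\circ(a\star c)$), reserving the displayed computation for the genuinely new case $a\in V^0$, $b\in V^r$ with $r\neq 0$.
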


\begin{proof} We must show that $a\star (b\circ c) \in O(V)$ for any $a,b,c\in V$.  Note that if $a\in V^r$ and $r\neq 0$, then $a\star (b\circ c) = 0\in O(V)$.  Therefore, we may assume that $a\in V^0$.  If $b\in V^0$, then the result is true by Theorem 2.1.1 of \cite{Z}.  The only remaining case is when $a\in V^0$ and $b\in V^r$ for some $r\neq 0$.  We calculate
\begin{eqnarray*}
a\star(b\circ c) - b\circ (a\star c) &=& Res_zRes_w\;Y(a,z)Y(b,w)c\dfrac{(1+z)^{|a|}}{z}\dfrac{(1+w)^{\wt{b} -1}}{w}\\
&&\hspace{.2 in} - Res_wRes_z\;Y(b,w)Y(a,z)c\dfrac{(1+z)^{|a|}}{z}\dfrac{(1+w)^{\wt{b} -1}}{w}\\
&=& Rez_wRes_{z-w}\;Y(Y(a,z-w)b, w)c\dfrac{(1+z)^{|a|}}{z}\dfrac{(1+w)^{\wt{b} -1}}{w}\\
&=&Res_wRes_{z-w}\sum_{i,j=0}^{\infty}{|a|\choose i}(-1)^j(z-w)^{i+j}\dfrac{(1+w)^{\wt{b}+|a|-1-i}}{w^{2+j}}\\
&&\hspace{.2 in}\cdot \;Y(Y(a,z-w)b,w)c\\
&=&\sum_{i,j=0}^{\infty}{|a|\choose i}(-1)^jRes_w\dfrac{(1+w)^{\wt{b}+|a|-1-i}}{w^{2+j}}Y(a(i+j)b,w)c\\
&=&\sum_{i,j=0}^{\infty}{|a|\choose i}(-1)^jRes_w\dfrac{(1+w)^{\wt{a(i+j)b} +j}}{w^{2+j}}Y(a(i+j)b,w)c
\end{eqnarray*}

which is a sum of terms in $O(V)$ by \eqref{resainVr}.  Since $b\circ(a\star c) $ is clearly in $O(V)$, and $a\star (b\circ c) - b\circ(a\star c) \in O(V)$ by the previous calculation, we see that $a\star (b\circ c) \in O(V)$, which proves that $O(V)$ is a left ideal of $V$ with respect to the star product.\end{proof}
\bigskip
\begin{prop} $O(V)$ is a right ideal of $V$.\end{prop}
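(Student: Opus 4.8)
The goal is to show $O(V)\star V\subseteq O(V)$, i.e.\ that $(a\circ b)\star c\in O(V)$ for all homogeneous $a,b,c$; by bilinearity this suffices. The plan is to first whittle the statement down to a single essential case and then run a double-residue computation parallel to the one in the preceding (left-ideal) Proposition. The first observation is that $O(V)$ is graded by the decomposition $V=\bigoplus_r V^r$: for fixed homogeneous $a,b$ all modes $a(n)b$ lie in one summand $V^s$, since replacing $n$ by $n\pm 1$ shifts the weight by an integer while the quantity $|{\cdot}|-\wt{\cdot}$ is invariant under integer shifts of weight. Consequently $a\circ b\in V^s$ for a single $s$. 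If $s\neq 0$ then $(a\circ b)\star c=0\in O(V)$, because the star product annihilates first factors outside $V^0$, so we may assume $a\circ b\in V^0$. Likewise, writing $c\in V^t$, if $t\neq 0$ then $c\in O(V)$ by the Lemma that $V^r\subseteq O(V)$ for $r\neq 0$, whence $(a\circ b)\star c\in O(V)$ by the left-ideal property already proven. This leaves the essential case $u:=a\circ b\in V^0$ and $c\in V^0$.

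For this case I would not treat $u$ as a weight-homogeneous vector (it is not), but instead decompose it into its finitely many weight-homogeneous pieces $u=\sum_\mu u_\mu$, each $u_\mu\in V_\mu\cap V^0$ with $\wt{u_\mu}=\mu\in\bb{Z}$. Applying \eqref{comm1} to each pair $(u_\mu,c)$ and summing, the surviving external factor depends only on $c$, so the sum collapses neatly:
\[
u\star c\;\equiv\;Res_z\,\dfrac{(1+z)^{\wt{c}-1}}{z}\,Y(c,z)\,(a\circ b)\quad\pmod{O(V)}.
\]
The point of invoking \eqref{comm1} rather than \eqref{comm2} is precisely that the factor $(1+z)^{\wt{c}-1}$ is independent of $\mu$, so no reassembly of the grading is needed. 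It therefore remains only to prove that the right-hand side lies in $O(V)$.

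The final step substitutes $a\circ b=Res_w\frac{(1+w)^{\wt{a}+\delta_{r,0}-1}}{w^{1+\delta_{r,0}}}Y(a,w)b$ into the displayed residue, producing a double residue in $z$ and $w$ of the product $Y(c,z)Y(a,w)b$. I would then use associativity to rewrite $Y(c,z)Y(a,w)$ as iterates $Y(c(k)a,w)$ --- expanding $\frac{1}{z}$ and $(1+z)^{\wt{c}-1}$ in powers of $(z-w)$ about $z=w$ --- and collect the resulting $w$-residues into a sum of terms of exactly the shape appearing on the left-hand side of \eqref{resainVr} (with first argument $c(k)a$), each of which then lies in $O(V)$. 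This mirrors the reduction carried out in the left-ideal Proposition and in \cite{Z}.

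I expect the last step to be the only real obstacle. The difficulty is structural rather than conceptual: because $a\circ b$ is not weight-homogeneous, one cannot simply quote gradedness of $O(V)$ by weight (as one could in the classical $\bb{Z}_{\geq 0}$-graded, $L(0)$-semisimple setting), and one must instead carry the expansion of $Y(c,z)$ about $z=w$ carefully enough that every exponent produced falls into the admissible range $m\geq n\geq 0$ of \eqref{resainVr}. Once the bookkeeping places each term in that range, the conclusion $(a\circ b)\star c\in O(V)$ follows, completing the proof that $O(V)$ is a right ideal.
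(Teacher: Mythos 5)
Your reduction and overall route coincide with the paper's: reduce to the case $a\circ b,\,c\in V^0$ (your observation that $a\circ b$ lies in a single $V^s$ is correct), pass through \eqref{comm1}, substitute the defining residue for $a\circc b$ into the result, and land in \eqref{resainVr}. Wait — to be precise: substitute $a\circ b=Res_w\frac{(1+w)^{\wt{a}+\delta_{r,0}-1}}{w^{1+\delta_{r,0}}}Y(a,w)b$. Your two variations from the paper are both valid: you dispose of $c\in V^t$, $t\neq 0$, via $c=c\circ\bb{1}\in O(V)$ together with the already-proved left-ideal property (the paper instead observes $V^0\star V^t\subseteq V^t\subseteq O(V)$), and you make the weight-homogeneous decomposition of $a\circ b$ explicit before applying \eqref{comm1} (the paper does this tacitly). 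Keeping $\delta_{r,0}$ general even subsumes the case $a,b\in V^0$ that the paper outsources to \cite{Z}.

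The gap is in the final step, and it is not the one you flagged. You cannot simply ``use associativity to rewrite $Y(c,z)Y(a,w)$ as iterates'': weak associativity identifies product and iterate only after multiplication by a sufficiently large \emph{nonnegative} power of the first variable, and your kernel carries the genuinely negative power $z^{-1}$. What the Jacobi identity actually gives under the double residue is a two-term identity,
\[
Res_zRes_w\,K(z,w)\,Y(c,z)Y(a,w)b \;=\; Res_wRes_{z-w}\,K(z,w)\,Y(Y(c,z-w)a,w)b \;+\; Res_wRes_z\,K(z,w)\,Y(a,w)Y(c,z)b,
\]
where $K(z,w)=\dfrac{(1+z)^{\wt{c}-1}(1+w)^{\wt{a}+\delta_{r,0}-1}}{z\,w^{1+\delta_{r,0}}}$, and your sketch accounts only for the first (iterate) term. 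The second, reversed-order term does lie in $O(V)$, but that requires its own short argument, which your proposal omits: the inner residue produces $d=Res_z\frac{(1+z)^{\wt{c}-1}}{z}Y(c,z)b$, and the outer kernel is exactly the circle-product kernel for $a$, so this term equals $a\circ d\in O(V)$; this is precisely the paper's ``second term'' step. By contrast, the bookkeeping you predicted to be the real obstacle is mechanical: expanding $(1+z)^{\wt{c}-1}$ and $z^{-1}$ about $z=w$ in the iterate term yields
\[
\sum_{i,j\geq 0}{\wt{c}-1\choose i}(-1)^j\,Res_w\,\frac{(1+w)^{\wt{c(i+j)a}+\delta_{r,0}-1+j}}{w^{2+\delta_{r,0}+j}}\,Y(c(i+j)a,w)b,
\]
because $\wt{c(i+j)a}=\wt{a}+\wt{c}-i-j-1$ (the weight shifts by an integer, as $|c|=\wt{c}\in\bb{Z}$) and $c(i+j)a$ lies in the same $V^r$ as $a$; each summand is \eqref{resainVr} with $n=j$ and $m=j+1\geq n\geq 0$. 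Once you restore the reversed-order term and dispose of it as above, your argument closes and is essentially the paper's proof.
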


\begin{proof} We must show that any element of the form $(a\circ b)\star c$ is in $O(V)$.  It suffices to prove this for homogeneous $a,b,c$.  By the definition of the star product we know that if $a\circ b\in V^r$ for some nonzero $r$, then $(a\circ b)\star c =0\in O(V)$.  Moreover, if $a$ and $b$ are both in $V^0$, then the result follows from \cite{Z}.  Thus, we assume that $a\in V^r$ for some nonzero $r$ and $a\circ b \in V^0$.  It is easy to see that $V^0\star V^r\subseteq V^r$ for any $r$.  Therefore, if $c\in V^r$ for some nonzero $r$, then $(a\circ b)\star c \in V^r\subseteq O(V)$.  Thus, we are reduced to the case where $a\circ b\in V^0$ and $c\in V^0$.  By \eqref{comm1} it suffices to show that 
\[x=Res_z\dfrac{(1+z)^{|c|-1}}{z}Y(c,z)(a\circ b) \in O(V).\]
We calculate
\begin{eqnarray*}
x &=& Res_zRes_w\dfrac{(1+z)^{|c|-1}(1+w)^{\wt{a}-1}}{zw}Y(c,z)Y(a,w)b\\
&=& Res_wRes_{z-w}Y(Y(c,z-w)a,w)b\;\dfrac{(1+z)^{|c|-1}(1+w)^{\wt{a}-1}}{zw}\\
&&\hspace{.2 in} +Res_wRes_zY(a,w)Y(c,z)b\;\dfrac{(1+z)^{|c|-1}(1+w)^{\wt{a}-1}}{zw}\\
\end{eqnarray*}
Since $a\not\in V^0$, we see that the second term in the above sum is in $O(V)$.  Therefore we have ($mod\;O(V)$):
\begin{eqnarray*}
x &\equiv& Res_wRes_{z-w}Y(Y(c,z-w)a,w)b\;\dfrac{(1+z)^{|c|-1}(1+w)^{\wt{a}-1}}{zw}\\
&=&\sum_{i=0}^{\infty}\sum_{j=0}^{\infty}{|c|-1\choose i}Res_wRes_{z-w}Y(Y(c,z-w)a,w)b\;\dfrac{(z-w)^{i+j}(1+w)^{\wt{a}+|c|-2-i}}{w^{2+j}}\\
&=&\sum_{i=0}^{\infty}\sum_{j=0}^{\infty}{|c|-1\choose i}Res_wY(c(i+j)a,w)b\;\dfrac{(1+w)^{\wt{a}+|c|-2-i}}{w^{2+j}}\\
&=&\sum_{i=0}^{\infty}\sum_{j=0}^{\infty}{|c|-1\choose i}Res_wY(c(i+j)a,w)b\;\dfrac{(1+w)^{\wt{c(i+j)a} -1+j}}{w^{2+j}}\\
\end{eqnarray*}
Again, since $c\in V^0$ and $a\not\in V^0$, we see that $c(i+j)a\not\in V^0$, and we use \eqref{resainVr} to see that above is a sum of terms in $O(V)$.  We conclude that $(a\circ b)\star c \in O(V)$ for any $a,b,c\in V$, hence $O(V)$ is a right ideal of $V$.\end{proof}

\begin{thm} Define $A(V):= V/O(V)$.  Then $A(V)$ is an associative algebra with respect to the $``\star "$ product.\end{thm}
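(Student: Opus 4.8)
The plan is to obtain this as a consequence of the two preceding propositions together with Zhu's original associativity theorem. Since $O(V)$ has just been shown to be both a left and a right ideal of $V$ with respect to $\star$, it is a two-sided ideal, and so the star product descends to a well-defined bilinear operation on the quotient $A(V)=V/O(V)$. It then remains only to verify associativity and to exhibit a multiplicative identity.

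For associativity I would first reduce to the integral-weight part $V^0=\bigoplus_{n\in\bb{Z}}V_n$. The point is that by the first Lemma of this section $V^r\subseteq O(V)$ for every $r\neq 0$, so all such elements vanish in $A(V)$; hence $A(V)$ is spanned by the images of homogeneous vectors of $V^0$. Moreover the factor $\delta_{r,0}$ in the definition of $\star$ forces $a\star b=0$ whenever the left factor lies in some $V^r$ with $r\neq 0$, and a short residue computation shows that $V^0\star V^0\subseteq V^0$. Consequently it suffices to prove
\[ (a\star b)\star c \equiv a\star (b\star c)\quad (mod\;O(V)) \]
for homogeneous $a,b,c\in V^0$.

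Once restricted to $V^0$, the circle and star products coincide exactly with Zhu's original expressions (our $\wt{a}$ being the ordinary integral weight there), and the congruences recorded in the earlier lemmas of this section---in particular \eqref{commlemma}, \eqref{comm1}, and \eqref{comm2}---reproduce the identities used by Zhu, with $T$ playing the role of $L(-1)$. Associativity on $V^0$ therefore follows from Theorem 2.1.1 of \cite{Z} essentially verbatim.

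Finally I would take the identity element to be the image of the vacuum $\bb{1}\in V_0\subseteq V^0$: the property $Y(\bb{1},z)=\mathrm{id}$ gives $\bb{1}\star b=b$ at once, while the creativity relation $Y(b,z)\bb{1}=e^{zT}b$ together with a residue computation yields $b\star\bb{1}=b$ for $b\in V^0$, both sides vanishing modulo $O(V)$ when $b\in V^r$ with $r\neq 0$. I expect the only genuine obstacle to be bookkeeping: making the reduction to $V^0$ airtight so that the appeal to \cite{Z} is rigorous, since the new content---the behaviour of the mixed products with one factor outside $V^0$---has already been absorbed into the two ideal propositions.
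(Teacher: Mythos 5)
Your proof is correct and takes essentially the same route as the paper: both arguments reduce associativity to homogeneous $a,b,c\in V^0$ (all other triples giving zero on both sides modulo $O(V)$, via the $\delta_{r,0}$ factor in $\star$, the lemma $V^r\subseteq O(V)$ for $r\neq 0$, and the two ideal propositions) and then invoke Theorem 2.1.1 of \cite{Z}. Your explicit well-definedness check and verification that the image of $\bb{1}$ is a unit go slightly beyond what the paper records, but they are consistent with its one-line proof.
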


\begin{proof}   We may assume that $a,b,c\in V^0$, since otherwise  $a\star (b\star c) = (a\star b)\star c=0$ in $A(V)$.  However, this result was proved in \cite{Z}.\end{proof}

\subsection{The functor $\Omega$}

The goal of this section is to construct a functor $\Omega$ from the category of admissible $V$-modules to the category of $A(V)$-modules.  Throughout, we let $M$ be an admissible $V$-module.
\bigskip
\begin{definition*}We define the \emph{vacuum space} $\Omega (M)$ of $M$ to be the set of all lowest weight vectors of $M$.\end{definition*}

Note that if $M$ is a simple admissible $V$-module, then one has $\Omega(M) = M(0)$.

\bigskip
\begin{remark*} Recall from \eqref{admissible} that for any admissible $V$-module $M$, and any homogeneous element $a\in V$, one has
\[a(n)M(\mu)\subseteq M(|a| +\mu -n-1).\]
From this, it is evident that not every element $a\in V$ has a mode that acts on the graded subspaces of $M$.   Indeed, if \[|a|+\mu -n-1 = \mu ,\] we see that $|a|=n+1$.  In particular, $|a|\in\bb{Z}$, which is only true for those elements $a\in V^0$.\end{remark*}
\bigskip
\begin{definition*} Let $a\in V$ be a homogeneous element.  We define the \emph{zero mode} $o(a)$ of $a$ as \[o(a) = a(\wt{a} -1).\]
We extend this definition to all of $V$ by linearity.\end{definition*}

\quad Note that if $a\in V^r$ for some nonzero $r$, then $\wt{a}-1> Re(|a|)-1$, and so $o(a) = a(\wt{a}-1)$ annihilates any lowest weight vector.  On the other hand, if $a\in V^0$, then $\wt{a} = |a|$, so this definition of $o(a)$ reduces to the definition given in \cite{Z}.
\bigskip
\begin{thm} Let $M$ be a simple admissible $V$-module with lowest weight space $M(0)$.  Then there is an action of the associative algebra $A(V)$ on $M(0)$, with $a\in A(V)$ acting via its zero mode $o(a)$.\end{thm}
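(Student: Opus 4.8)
The plan is to show that the linear map $a\mapsto o(a)$ descends to a unital algebra homomorphism $\bar{o}\colon A(V)=V/O(V)\to \mathrm{End}(M(0))$, so that the assignment $\overline{a}\cdot u:=o(a)u$ makes $M(0)$ a left $A(V)$-module. Because $M$ is simple, the remark above gives $M(0)=\Omega(M)$, so every vector of $M(0)$ is a lowest weight vector; this is exactly the input needed to invoke the lowest weight condition. For homogeneous $c$ that condition yields, for all $u\in M(0)$, the annihilation $c(n)u=0$ whenever $n\geq\wt{c}$ if $c\in V^0$, and already whenever $n\geq\wt{c}-1$ if $c\in V^r$ with $r\neq 0$. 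In particular $o(c)=c(\wt{c}-1)$ annihilates $M(0)$ for every $c\in V^r$ with $r\neq 0$, in agreement with $V^r\subseteq O(V)$; thus only the $V^0$ component of a vector acts nontrivially, and for the main computation we may take $a,b\in V^0$, where $\wt{a}=|a|$.

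First I would record the two routine points. By the admissibility relation \eqref{admissible}, for $a\in V^0$ one has $o(a)=a(|a|-1)\colon M(\mu)\to M(\mu)$, so $o(a)$ restricts to an endomorphism of $M(0)$. Since $\wt{\bb{1}}=0$ we get $o(\bb{1})=\bb{1}(-1)=\mathrm{id}_M$, which will supply the unit axiom once multiplicativity is known.

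The substance of the argument is the pair of identities, valid for homogeneous $a,b$ and all $u\in M(0)$,
\[ o(a\circ b)\,u=0,\qquad o(a\star b)\,u=o(a)\,o(b)\,u. \]
The first makes the map vanish on $O(V)$, hence factor through $A(V)$; the second, together with the fact that multiplication in $A(V)$ is induced by $\star$, makes $\bar{o}$ multiplicative. Both are proved as in \cite{Z} and \cite{DLM2}: one writes the composite $o(a)o(b)u$ as an iterated residue of $Y(a,z)Y(b,w)u$ weighted by suitable powers of $(1+z)$, $(1+w)$, $z^{-1}$ and $w^{-1}$, converts $Y(a,z)Y(b,w)$ into $Y(Y(a,z-w)b,w)$ by the associativity (iterate) formula on $M$ exactly as in the proofs that $O(V)$ is a two-sided ideal, and then extracts the zero mode. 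Expanding in the modes $a(i)b$, the terms whose mode index exceeds the bound above kill $u$, truncating the expansion to precisely $o(a\star b)u$; running the same reorganization against the circle-product weights yields $0$.

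The step I expect to be the main obstacle is controlling the terms in which an intermediate vector falls outside $V^0$. When $a,b\in V^0$ every product $a(i)b$ again lies in $V^0$ and the computation is literally Zhu's, but a general spanning element $a\circ b$ of $O(V)$ forces the mixed case $a\in V^r$, $b\in V^{-r}$ with $r\neq 0$ yet $a\circ b\in V^0$; there $\wt{a}\neq|a|$ and the exponent $\delta_{r,0}$ in the circle product shifts, so the clean pattern of powers is disturbed. I would dispose of this exactly as in the ideal computations of Section~3.1: the bookkeeping behind \eqref{resainVr} shows that every term produced in the reorganization is either manifestly a circle product or has the form $o\big(c(i+j)a\big)$ with $c(i+j)a$ lying in some $V^s$, $s\neq 0$, and in either case it annihilates $u\in M(0)$ by the refined lowest weight property recorded above. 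Collecting the surviving contributions gives the two displayed identities, whence $\bar{o}\colon A(V)\to\mathrm{End}(M(0))$ is a unital algebra homomorphism and $M(0)$ becomes an $A(V)$-module with $a\in A(V)$ acting through its zero mode $o(a)$.
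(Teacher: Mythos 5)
Your overall skeleton matches the paper's proof: reduce to showing $O(V)$ annihilates $M(0)$ and $o(a\star b)=o(a)o(b)$ there, note that zero modes of $V^r$, $r\neq 0$, kill lowest weight vectors, quote Zhu for $a,b\in V^0$, and isolate the mixed case $a\in V^r$ ($r\neq 0$), $a\circ b\in V^0$ as the genuinely new step. But your treatment of that crucial mixed case has a real gap. Your proposed disposal --- ``every term produced in the reorganization is either manifestly a circle product or has the form $o\big(c(i+j)a\big)$ with $c(i+j)a\in V^s$, $s\neq 0$, and in either case it annihilates $u$ by the lowest weight property'' --- fails on both branches. The circle-product branch is circular: annihilation of $M(0)$ by zero modes of elements of $O(V)\cap V^0$ is exactly the statement under proof, and the lowest weight property gives nothing about such modes; relatedly, \eqref{resainVr} certifies membership in $O(V)$, not annihilation of $M(0)$, and conflating the two is what makes the right-ideal bookkeeping look transferable when it is not. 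The other branch is empty in the relevant situation: in the mixed case $a\circ b\in V^0$ forces $|a|+|b|\in\bb{Z}$, so every iterate $a(i)b$ has integral weight and lies in $V^0$; no terms of the form $o(c(i+j)a)$ with $c(i+j)a\in V^s$, $s\neq 0$, ever arise.

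What the paper actually does in the mixed case is simpler and avoids any residual terms. Both $a$ and $b$ lie outside $V^0$, so $a(n)w=0$ for $n\geq \wt{a}-1$ and $b(m)w=0$ for $m\geq \wt{b}-1$ for $w\in M(0)$ (this is your ``refined lowest weight property,'' correctly stated). Applying $Res_{z_1}Res_{z_2}\,z_1^{\wt{a}-1}z_2^{\wt{b}-1}$ to the module Jacobi identity, both product-side terms vanish \emph{termwise} by these annihilations, and the iterate side is then computed to equal precisely $o(a\circ b)w$, using the weight bookkeeping $\wt{a(n)b}=\wt{a}+\wt{b}-n-2$ to recognize the modes $(a(i-1)b)(\wt{a}+\wt{b}-2-i)$ as the correct zero modes. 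So the conclusion $o(a\circ b)w=0$ drops out in one stroke, with nothing left over to dispose of. Your first description of the method (convert products to iterates on $M$, extract the zero mode, kill terms by the annihilation bounds) is in fact this argument; your later ``main obstacle'' paragraph replaces it with a mechanism that is both circular and inapplicable, and that paragraph is where a referee would stop you. The remaining pieces of your proposal --- $M(0)=\Omega(M)$ by simplicity, $o(a)$ preserving $M(0)$ for $a\in V^0$, $o(\bb{1})=\mathrm{id}$, and the easy cases of $o(a\star b)=o(a)o(b)$ (if $a\notin V^0$ then $a\star b=0$ and $o(a)$ kills $M(0)$; if $a\in V^0$, $b\notin V^0$ then $a\star b$ has all components in $\bigoplus_{s\neq 0}V^s$, whose zero modes annihilate $M(0)$, as does $o(b)$) --- are sound and agree with the paper.
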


\begin{proof} We must show that $O(V)$ annihilates $M(0)$, and that $o(a\star b) = o(a)o(b)$ for $a,b\in A(V)$.  First, we show that $O(V)$ annihilates $M(0)$.  By the previous discussion, we see that $V^r$ annihilates $M(0)$ whenever $r\neq 0$.  Therefore, we must show that $V^0\cap O(V)$ annihilates $M(0)$.  For this it suffices to show that 
\[o(a\circ b) M(0) = 0\]
for $a\circ b\in V^0$.  If $a,b\in V^0$, this result was proved in \cite{Z}.  Thus, we assume that $a\in V^r$ for some nonzero $r$, and $a\circ b\in V^0$.  Note that in this case, $b\not\in V^0$, and we have 
\[\wt{a(n)b} = \wt{a}+\wt{b} -n-2\]
for any $n\in\bb{Z}$.  Now let $w\in M(0)$.  Following \cite{DLM2}, we use a property of the $\delta$ function to rewrite the Jacobi Identity as 
\begin{eqnarray*}
z_1^{-1}\delta\left(\dfrac{z_0+z_2}{z_1}\right)Y(a, z_1)Y(b,z_2)w -z_0^{-1}\delta\left(\dfrac{z_2-z_1}{-z_0}\right)Y(b, z_2)Y(a,z_1)w \\ 
= z_1^{-1}\delta\left(\dfrac{z_2+z_0}{z_1}\right)Y(Y(a, z_0)b, z_2)w
\end{eqnarray*}
Since $a,b\not\in V^0$, we know that $a(\wt{a}-1)w = b(\wt{b}-1)w=0$.  Thus, $Rez_{z_1}Res_{z_2}z_1^{\wt{a}-1}z_2^{\wt{b}-1}$ of the left hand side is equal to $0$.  Then we have
\begin{eqnarray*}
0&=&Rez_{z_1}Res_{z_2}z_1^{\wt{a}-1}z_2^{\wt{b}-1}z_1^{-1}\delta\left(\dfrac{z_2+z_0}{z_1}\right)Y(Y(a, z_0)b, z_2)w\\
&=&Res_{z_2}z_2^{\wt{b}-1}(z_2+z_0)^{\wt{a}-1}Y(Y(a, z_0)b, z_2)w\\
&=& Res_{z_2}\sum_{i=0}^{\infty}{\wt{a}-1\choose i} z_0^iz_2^{\wt{a}+\wt{b} -2-i}Y(Y(a, z_0)b, z_2)w\\
&=&Res_{z_0}z_0^{-1}Res_{z_2}\sum_{i=0}^{\infty}{\wt{a}-1\choose i} z_0^iz_2^{\wt{a}+\wt{b} -2-i}Y(Y(a, z_0)b, z_2)w\\
&=&\sum_{i=0}^{\infty}{\wt{a}-1\choose i} (a(i-1)b)(\wt{a}+\wt{b}-2-i)w\\
&=& o(a\circ b)w.
\end{eqnarray*}

Altogether, this shows that $O(V)$ annihilates $M(0)$.  To see that $o(a\star b) = o(a)o(b)$ on $M(0)$, observe that if $a\not\in V^0$, then $o(a)$ annihilates $M(0)$, and $a\star b = 0$, so the result holds.  If $a\in V^0$ and $b\not\in V^0$, then $a\star b\in O(V)$, and therefore $o(a\star b)  = 0=o(b)=o(a)o(b)$ on $M(0)$.  The only remaining case is when $a,b\in V^0$, and this case was proved in \cite{Z}.\end{proof}

The following corollary is the main result of this section.
\bigskip
\begin{cor} Let $M$ be any admissible $V$-module.  Then there is an action of $A(V)$ on $\Omega(M)$.  In particular, $\Omega$ defines a functor from the category of admissible $V$-modules to the category of $A(V)$-modules.\end{cor}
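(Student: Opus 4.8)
The plan is to bootstrap from the preceding Theorem, which established the action on $M(0)$ for \emph{simple} $M$. The crucial observation is that simplicity entered there only in order to identify $\Omega(M)$ with the single graded piece $M(0)$; the actual computations---that $o(a\circ b)w=0$ and that $o(a\star b)w=o(a)o(b)w$---used only that $w$ is a lowest weight vector (this is what guarantees $a(\wt{a}-1)w=0$ whenever $a\notin V^0$), and not that $w$ lies in $M(0)$ nor that $M$ is simple. Consequently those identities go through unchanged for an arbitrary homogeneous $w\in\Omega(M)$. The one genuinely new point, which simply does not arise when $\Omega(M)=M(0)$, is that each operator $o(a)$ must carry $\Omega(M)$ into itself; I expect this to be the main obstacle, and I would dispose of it first.

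To show $o(a)\Omega(M)\subseteq\Omega(M)$, note first that if $a\in V^r$ with $r\neq 0$ then $o(a)$ annihilates every lowest weight vector, so there is nothing to prove. Hence assume $a\in V^0$, so $\wt{a}=|a|\in\bb{Z}$ and $o(a)=a(|a|-1)$ preserves each graded piece $M(\mu)$. Fix homogeneous $v\in\Omega(M)$; I must check that $o(a)v$ still satisfies the lowest weight annihilation condition. Given homogeneous $b\in V_\lambda$ and an integer $n$ that is \emph{forbidden} for a lowest weight vector, i.e. $n\neq\lambda-1$ and $n\geq Re(\lambda)-1$, I would expand via the Borcherds commutator formula
\[ b(n)\,a(|a|-1)\,v = a(|a|-1)\,b(n)\,v + \sum_{i\geq 0}\binom{n}{i}\bigl(b(i)a\bigr)\bigl(n+|a|-1-i\bigr)v. \]
The first term vanishes since $n$ is forbidden and $v$ is a lowest weight vector. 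In the (finite) sum each $b(i)a$ lies in $V_{\lambda+|a|-i-1}$ and is applied in mode $n+|a|-1-i$; a short bookkeeping with real parts shows that this mode equals $|b(i)a|-1$ exactly when $n=\lambda-1$, and falls strictly below $Re(|b(i)a|)-1$ exactly when $n<Re(\lambda)-1$. Both escape routes are closed because $n$ is forbidden, so $v$ being a lowest weight vector forces every summand to vanish. Hence $b(n)o(a)v=0$ for all such $b,n$, proving $o(a)v\in\Omega(M)$.

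With this preservation in hand, $a\mapsto o(a)|_{\Omega(M)}$ is a well-defined linear map $V\to End(\Omega(M))$. Extending the Theorem's computations to all of $\Omega(M)$ as above shows that $O(V)$ acts as zero and that $o(a\star b)=o(a)o(b)$ on $\Omega(M)$, so this map descends to an algebra homomorphism $A(V)\to End(\Omega(M))$, which is the asserted action.

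For functoriality, a morphism $f\colon M\to N$ of admissible $V$-modules is a grade-preserving linear map commuting with every mode $a(n)$. It therefore sends lowest weight vectors to lowest weight vectors, since $f(v)$ is homogeneous and $b(n)f(v)=f(b(n)v)=0$ whenever $b(n)v=0$; thus $f(\Omega(M))\subseteq\Omega(N)$. Moreover $f$ intertwines zero modes, $o(a)f(v)=f(o(a)v)$, so $f|_{\Omega(M)}$ is an $A(V)$-module homomorphism. Hence $\Omega$ carries admissible $V$-modules and their morphisms to $A(V)$-modules and their morphisms, giving the desired functor.
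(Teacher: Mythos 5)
Your proof is correct and follows essentially the same route as the paper, whose one-sentence proof simply observes that $\Omega(M)$ is a sum of lowest weight spaces, each an $A(V)$-module because the preceding Theorem's computations use only the lowest weight property of $w$ (not simplicity of $M$) --- exactly your bootstrap. Your commutator argument showing $o(a)\Omega(M)\subseteq\Omega(M)$ spells out in detail the closure step that the paper leaves implicit in the phrase ``hence a sum of modules for $A(V)$,'' and your degree bookkeeping there is accurate.
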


\begin{proof} The action of $A(V)$ on $\Omega(M)$ follows from the fact that $\Omega(M)$ is a sum of lowest weight spaces of $M$, hence a sum of modules for $A(V)$.  The remaining functorial properties of $\Omega$ are clear.\end{proof}

\begin{prop}\label{prop3.12} Let $M$ be a simple admissible $V$-module.  Then $\Omega(M) = M(0)$ is a simple $A(V)$-module.\end{prop}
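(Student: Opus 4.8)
The plan is to show that every nonzero $A(V)$-submodule $W$ of $\Omega(M)=M(0)$ is all of $M(0)$. First I would form the $V$-submodule $N\subseteq M$ generated by $W$. Since $W$ consists of homogeneous vectors lying in $M(0)$ and the modes respect the grading \eqref{admissible}, $N$ inherits a compatible grading $N=\bigoplus_{\mu}(N\cap M(\mu))$ with $N\cap M(0)\supseteq W\neq 0$; as $N$ is a nonzero (admissible) submodule, simplicity of $M$ forces $N=M$, and in particular $N\cap M(0)=M(0)$. The proposition then reduces entirely to proving the reverse inclusion $N\cap M(0)\subseteq W$.

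To establish this, I would use that $N$ is spanned by monomials $a^1(n_1)\cdots a^s(n_s)w$ with $w\in W$ and the $a^i\in V$ homogeneous, each such monomial being homogeneous of degree $\sum_i(|a^i|-n_i-1)$. Hence $N\cap M(0)$ is spanned by those monomials of total degree $0$, and it suffices to show each of these lies in $W$. I would induct on the length $s$. The case $s=0$ is trivial, and if the vector on which the outermost mode acts already sits in $M(0)$, then (as noted in the Remark above) that mode must be a zero mode $o(a)$ of an element of $V^0$, so the monomial is $o(a)w'\in A(V)W=W$, using the multiplicativity $o(a\star b)=o(a)o(b)$ and the fact that $O(V)$ annihilates $M(0)$, both proved above.

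The substantive case is a monomial $a(n)v$ in which $v$ lies in some $M(\mu)$ with $Re(\mu)>0$, so that the outer mode $a(n)$ is genuinely degree-lowering and $v\notin W$. Here I would commute $a(n)$ inward past the modes comprising $v$ via the commutator form of the Jacobi identity, producing a sum of terms in which $a$ has been contracted against the inner generators through modes of the form $a(k)b^i$. I would then push the reassembled degree-lowering modes all the way to $w\in M(0)$, where the lowest-weight condition forces them to annihilate $w$ unless they act as zero modes. Tracking the total degree throughout, and discarding surplus contributions by \eqref{resainVr} together with the Lemma giving $V^r\subseteq O(V)$ for $r\neq 0$ and the congruence \eqref{commlemma}, the net degree-$0$ output should reduce to a linear combination of zero modes $o(u)$ applied to $w$, hence an element of $A(V)W=W$, closing the induction.

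The main obstacle is exactly this commutation-and-reduction step, and its organization is where the $\mathbb{C}$-grading bites. The module grading is indexed by complex numbers with no least positive degree and with non-integral steps, so the weight induction familiar from the $\mathbb{Z}_{\geq 0}$-graded case of \cite{Z}, \cite{DLM2} must instead be run over the finite set of degrees actually occurring in a fixed monomial, ordered by real part. One must also keep careful track of the splitting $V=V^0\oplus\bigoplus_{r\neq 0}V^r$, since only the $V^0$-components yield honest zero modes while the $V^r$ $(r\neq 0)$ contributions are absorbed into $O(V)$ and so annihilate $M(0)$. The delicate point is to verify that the surviving degree-preserving operator is literally of the form $o(u)$ for some $u\in V$, rather than merely an operator stabilizing $M(0)$, so that the $A(V)$-invariance of $W$ can be invoked.
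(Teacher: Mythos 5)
Your overall strategy is correct, and it is essentially the argument behind the result that the paper itself invokes: the paper's entire proof of this proposition is the citation ``See \cite{DLM2} Proposition 5.4,'' so you are in effect reconstructing that proof. Your first reduction (generate $N\subseteq M$ from a nonzero $A(V)$-submodule $W\subseteq M(0)$, use simplicity to get $N=M$, and reduce everything to $N\cap M(0)\subseteq W$) is sound. The genuine gap is that the proposal stops exactly where the work lies: the commutation-and-reduction step is described rather than carried out, its conclusion (``the net degree-$0$ output \emph{should} reduce to a linear combination of zero modes'') is asserted, and your own final paragraph concedes that this step and the identification of the surviving operator as a literal $o(u)$ remain open. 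Moreover, the tools you propose for the reduction are miscalibrated: \eqref{resainVr}, \eqref{commlemma}, and the lemma $V^r\subseteq O(V)$ for $r\neq 0$ are congruences modulo $O(V)$ \emph{inside} $V$, used to build the algebra $A(V)$; they do not by themselves discard terms in a computation on $M$. What actually kills terms is the admissible grading \eqref{admissible}: any mode $a(n)$ whose degree $\delta=|a|-n-1$ has $Re(\delta)<0$, or has $Re(\delta)=0$ with $\delta\neq 0$, annihilates $M(0)$ outright, because it maps $M(0)$ into the graded piece $M(\delta)$, which is zero by the definition of an admissible module. The only Section 3 input needed is that $o(a)o(b)=o(a\star b)$ on $M(0)$.

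To close the step, you have two clean options, both compatible with the $\bb{C}$-grading. (a) Induct on the length $s$ of a degree-zero monomial $a^1(n_1)\cdots a^s(n_s)w$, $w\in W$, with $\delta_1=|a^1|-n_1-1$. If $Re(\delta_1)>0$ the inner vector lies in $M(-\delta_1)$ with $Re(-\delta_1)<0$, so the monomial vanishes. If $\delta_1=0$, then $|a^1|=n_1+1\in\bb{Z}$ forces $a^1\in V^0$ (this is the paper's Remark, and it answers your ``delicate point'': the surviving degree-preserving mode is automatically $a^1(\wt{a^1}-1)=o(a^1)$), the inner monomial lies in $W$ by induction, and $o(a^1)W\subseteq W$ by $A(V)$-stability. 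Otherwise $\delta_1\neq 0$ with $Re(\delta_1)\leq 0$: commute $a^1(n_1)$ rightward using \eqref{LieCommRlns}; every correction term $(a^1(i)a^j)(n_1+n_j-i)$ shortens the monomial to length $s-1$ while preserving total degree $0$, and the term in which $a^1(n_1)$ reaches $w$ lands in $M(\delta_1)=0$. No $O(V)$-reduction is ever needed. (b) More efficiently, use the triangular machinery the paper already builds for $\Lambda$: $N=\mathscr{U}(V_{Lie})W$, and since $(V_{Lie})^{\leq 0}$ is a Lie subalgebra (the paper proves this even though $(V_{Lie})^-$ alone is not), PBW gives $N=\mathscr{U}\left((V_{Lie})^+\right)\mathscr{U}\left((V_{Lie})^{\leq 0}\right)W=\mathscr{U}\left((V_{Lie})^+\right)W$, because $(V_{Lie})^-$ annihilates $M(0)$ and $(V_{Lie})^0$ acts on $W$ through $A(V)_{Lie}$. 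Every element of $(V_{Lie})^+$ strictly raises the real part of the degree, so the degree-zero part of $\mathscr{U}\left((V_{Lie})^+\right)W$ is exactly $W$, whence $M(0)=N\cap M(0)=W$.
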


\begin{proof} See \cite{DLM2} Proposition 5.4\end{proof}

\subsection{The Lie Algebra $V_{Lie}$}

In \cite{DLM2} we encounter a Lie algebra which plays a role in their construction of admissible $V$-modules from $A_g(V)$-modules.  Here we use this technique to construct a Lie algebra $V_{Lie}$ which plays an analogous role.  The construction of $V_{Lie}$ is similar to the construction found in \cite{DLM2}, but with some notable differences.

\quad Let $t$ be an indeterminate, and set $\mathscr{L}(V) = \bb{C}[t,t^{-1}]\otimes V$.  There is a natural vertex algebra structure on $\bb{C}[t,t^{-1}]$, and so $\mathscr{L}(V)$ is a vertex algebra, as it is a tensor product of vertex algebras.  We have the translation covariance operator
\[D = \frac{d}{dt}\otimes 1 + 1\otimes T,\]
and it is well known that the space
\[\mathscr{L}(V)/D\mathscr{L}(V)\] carries the structure of a Lie algebra, with Lie bracket given by 
\[[a+D\mathscr{L}(V), b+D\mathscr{L}(V)] = a(0)b+D\mathscr{L}(V).\]
Therefore, we define 
\[V_{Lie} = \mathscr{L}(V)/D\mathscr{L}(V).\]

We let $a_n$ denote the image of $t^n\otimes a$ in $V_{Lie}$.  Then note that 
\begin{equation} \left[a_n,b_m\right] = \sum_{i=0}^{\infty}{n\choose i} (a(i)b)_{m+n-i}.\label{LieCommRlns}
\end{equation}

\quad We give a complex grading to $V_{Lie}$ by declaring, for homogeneous $a\in V$, 
\[deg(a_n) = |a|-n-1.\]
We then have the Lie subalgebras
\[ (V_{Lie})^+ = \left< \;a_n\;|\; Re(deg(a_n))>0\;\right>\]
and
\[(V_{Lie})^0 = \left<\;a_n\;|\;deg(a_n) = 0\;\right>.\]
The space
\[\left(V_{Lie}\right)^- = \left< a_n\;|\; Re(deg(a_n)) <0\quad \text{or}\quad Re(deg(a_n))=0\;\text{and} \;Im(deg(a_n))\neq 0 \;\right>\]
is not a Lie subalgebra of $V_{Lie}$.  Indeed, if $|a|=n+i\lambda$ for some $n\in\bb{Z}$ and nonzero $\lambda \in \bb{R}$, and $|b| = n-i\lambda$, then $a_{\wt{a}-1} =a_{n-1}$ and $b_{\wt{b}-1}=b_{n-1} $ are in $(V_{Lie})^-$, but 
\begin{equation*}
[a_{n-1},b_{n-1}] = \sum_{i=0}^{\infty} { n-1 \choose i} (a(i)b)_{2n -2-i} \in (V_{Lie})^0.
\end{equation*}

Therefore, we have that 
\[(V_{Lie})^{\leq 0}=(V_{Lie})^0 \oplus (V_{Lie})^-\]
is a Lie subalgebra of $V_{Lie}$.

\quad Of course, it is easy to see that $(V_{Lie})^0$ is spanned by elements of the form $a_{|a|-1}$ for homogenous $a\in V^0$.  This gives a surjection 
\[ V^0\twoheadrightarrow(V_{Lie})^0\]

\begin{prop} The kernel of the map 
\begin{eqnarray*}
V^0&\to&(V_{Lie})^0\\
a&\mapsto&a_{|a|-1}
\end{eqnarray*}
is $(T+L)V^0$. \end{prop}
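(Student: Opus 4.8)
The plan is to prove both inclusions through a single grading argument on $\mathscr{L}(V)$, after first recording the easy direction as a sanity check. Observe that $V^0=\bigoplus_{n\in\bb{Z}}V_n$, since $a\in V^0$ means $|a|=\wt{a}$, equivalently $|a|\in\bb{Z}$; note also that both $L$ and $T$ preserve $V^0$ (as $T$ sends integer weights to integer weights). For the inclusion $(T+L)V^0\subseteq\ker$, take homogeneous $a\in V_n$. Then $(T+L)a=Ta+na$ with $Ta\in V_{n+1}$, so under the map its image is $(Ta)_n+n\,a_{n-1}$. But applying $D$ to $t^n\otimes a$ gives $D(t^n\otimes a)=n\,t^{n-1}\otimes a+t^n\otimes Ta$, which is $0$ in $V_{Lie}$; hence $n\,a_{n-1}+(Ta)_n=0$, and the image of $(T+L)a$ vanishes.

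For the reverse inclusion I would grade $\mathscr{L}(V)$ by declaring $\deg(t^n\otimes a)=|a|-n-1$ for homogeneous $a$, so that this grading is compatible with $\deg(a_n)$ on $V_{Lie}$. A direct computation shows $D$ is homogeneous of degree $+1$: indeed both summands of $D(t^n\otimes a)=n\,t^{n-1}\otimes a+t^n\otimes Ta$ have degree $|a|-n$, one higher than $\deg(t^n\otimes a)$. Consequently $D\mathscr{L}(V)$ is a graded subspace and its degree-$0$ part is exactly $D\big(\mathscr{L}(V)_{-1}\big)$, whence $(V_{Lie})^0=\mathscr{L}(V)_0/D\mathscr{L}(V)_{-1}$.

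Next I would identify the domain with $\mathscr{L}(V)_0$. The degree-$0$ piece is $\mathscr{L}(V)_0=\bigoplus_{m\in\bb{Z}}t^{m-1}\otimes V_m$, and the linear map $V^0\to\mathscr{L}(V)_0$ sending $a\in V_m$ to $t^{m-1}\otimes a$ is an isomorphism. Under it the map of the proposition becomes the canonical quotient $\mathscr{L}(V)_0\twoheadrightarrow\mathscr{L}(V)_0/D\mathscr{L}(V)_{-1}$, so the kernel is the preimage of $D\mathscr{L}(V)_{-1}$. Finally, since $\mathscr{L}(V)_{-1}=\bigoplus_{n}t^n\otimes V_n$, for $b\in V_n$ we have $D(t^n\otimes b)=n\,t^{n-1}\otimes b+t^n\otimes Tb$, which corresponds under the isomorphism to $nb+Tb=(L+T)b$. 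Hence the kernel is precisely $(T+L)V^0$, as claimed.

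The routine parts are the two elementary index computations; the one point demanding care—the main obstacle—is verifying that $D$ is homogeneous for the chosen grading and hence that $(V_{Lie})^0$ really is the clean quotient $\mathscr{L}(V)_0/D\mathscr{L}(V)_{-1}$, since it is this exactness that lets a single identification deliver both inclusions at once and rules out any extra kernel coming from sums of elements of distinct weights.
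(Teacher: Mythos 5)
Your proof is correct and takes essentially the same approach as the paper: the paper's argument likewise rests on the fact that the degree-zero part of $D\mathscr{L}(V)$ is exactly $D$ applied to the degree $-1$ part (i.e., that $D$ is homogeneous of degree $+1$ for the grading $\deg(t^n\otimes a)=|a|-n-1$), and then solves $a^i = Tb^{i-1}+Lb^i$ component-wise, which is precisely your graded-quotient identification written out in coordinates. Your write-up merely makes explicit the homogeneity check that the paper asserts without comment.
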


\begin{proof} It is easy to see that $(T+L)V^0$ is contained in the kernel.  Now suppose $a\in V^0$ is in the kernel.  The zero element in the quotient space $(V_{Lie})^0$ is equal to the space
\[D\left< a_n\;|\;deg(a_n) = -1\;\right>,\]
and therefore the image of $a$ must be a finite sum of the form
\[ \sum_{j} D(b^j_{j}) = \sum_{j}j(b^j_{j-1}) + (Tb^j)_{j}.\]
for some homogeneous $b^j\in V_j$.  If we write $a$ as a sum of homogeneous terms
\[a=\sum_{i}a^i\]
with $a^i\in V_i$, then we simply solve for each $a^i$ to find that 
\[a^i = Tb^{i-1} +Lb^i\]
and therefore 
\[a = (T+L)\sum_ib^i.\]\end{proof}
\quad This proves the claim, and further shows that we have an isomorphism of Lie algebras
\[V^0/(T+L)V^0 \cong (V_{Lie})^0,\]
where the Lie structure on $V^0/(T+L)V^0$ is inherited via this linear isomorphism.

The next lemma was proved in \cite{DLM2}.
\bigskip
\begin{lem}\label{lielemma} The map 
\begin{eqnarray*}
(V_{Lie})^0&\to& A(V)_{Lie}\\
a_{|a|-1}&\mapsto& a+O(V)
\end{eqnarray*}
is a Lie algebra epimorphism.\end{lem}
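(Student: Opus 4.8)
The plan is to verify, using the identification $(V_{Lie})^0\cong V^0/(T+L)V^0$ established immediately above, that the map is induced by the linear assignment $V^0\to A(V)$, $a\mapsto a+O(V)$, and then to check in turn that this is well-defined, surjective, and bracket-preserving. For well-definedness I would show that the subspace $(T+L)V^0$ lies in the kernel: this is immediate from \eqref{modov}, which gives $(T+L)a\equiv 0\pmod{O(V)}$ for homogeneous $a$, so $(T+L)V^0\subseteq O(V)$ and the map descends to $(V_{Lie})^0$. For surjectivity I would invoke the lemma that $V^r\subseteq O(V)$ whenever $r\neq 0$: since $V=\bigoplus_r V^r$, this forces $A(V)=V/O(V)=(V^0+O(V))/O(V)$, so every coset has a representative in $V^0$ and the map is onto.

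The crux is matching the two brackets. Fix homogeneous $a,b\in V^0$, so that $\wt{a}=|a|$ and $\wt{b}=|b|$ are integers. The commutator formula \eqref{LieCommRlns} gives
\begin{equation*}
[a_{|a|-1},b_{|b|-1}]=\sum_{i=0}^{\infty}\binom{|a|-1}{i}\,(a(i)b)_{|a|+|b|-2-i},
\end{equation*}
a finite sum by the lower-truncation axiom. Each $a(i)b$ lies in $V_{|a|+|b|-i-1}\subseteq V^0$, and a degree count using $\deg(c_n)=|c|-n-1$ shows $\deg\big((a(i)b)_{|a|+|b|-2-i}\big)=0$ and $|a|+|b|-2-i=\wt{a(i)b}-1$, so each term is a genuine generator of $(V_{Lie})^0$. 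Hence the image of the left-hand side is $\sum_{i}\binom{|a|-1}{i}(a(i)b)+O(V)$. On the target side the bracket is $a\star b-b\star a+O(V)$, and \eqref{comm2} together with the residue extraction $Res_z\,(1+z)^{|a|-1}Y(a,z)b=\sum_{i\geq 0}\binom{|a|-1}{i}a(i)b$ (valid since $\wt{a}=|a|$) shows
\begin{equation*}
a\star b-b\star a\equiv\sum_{i=0}^{\infty}\binom{|a|-1}{i}a(i)b\pmod{O(V)}.
\end{equation*}
The two expressions coincide, so the map is a Lie homomorphism, and combined with surjectivity it is the claimed epimorphism.

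I expect the only real subtlety to lie in the bracket step: one must track weights carefully to confirm that every $a(i)b$ again lands in $V^0$ (so its mode is an honest generator of $(V_{Lie})^0$ rather than an element killed by the grading) and that the residue computation lines up index-for-index with \eqref{comm2}. Once the bookkeeping is in place the identification is forced by \eqref{LieCommRlns} and \eqref{comm2}, and no input beyond the associativity of $\star$ is required.
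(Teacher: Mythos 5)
Your proposal is correct and follows essentially the same route as the paper: the paper's proof likewise obtains well-definedness and surjectivity from the inclusions $(T+L)V^0\subseteq O(V^0)\subseteq O(V)\cap V^0$ (packaged as the chain of epimorphisms $(V_{Lie})^0\cong V^0/(T+L)V^0\twoheadrightarrow A(V^0)\twoheadrightarrow A(V)$) and then simply says to ``use \eqref{comm2} and \eqref{LieCommRlns}'' for the bracket. Your explicit residue computation and the check that each $a(i)b$ lands in $V^0$ with $|a|+|b|-2-i=\wt{a(i)b}-1$ just fill in the details the paper leaves implicit.
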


\begin{proof} First observe that we have the following inclusions:
\[(T+L)V^0 \subseteq O(V^0)\subseteq O(V)\cap V^0.\]
Then we have the corresponding epimorphisms
\[(V_{Lie})^0 \cong V^0/\left((T+L)V^0\right) \twoheadrightarrow A(V^0) \cong V^0 /O(V^0) \twoheadrightarrow A(V)\cong V^0/(O(V)\cap V^0)\]
which induce the desired linear epimorphism.  To see that $a_{|a|-1}\mapsto a+O(V)$ is a Lie algebra morphism, use \eqref{comm2} and \eqref{LieCommRlns}.\end{proof}

\subsection{The $\Lambda$ functor}

In this section we provide a natural construction of an admissible $V$-module from an $A(V)$-module, thereby giving a functor $\Lambda$ from the category of $A(V)$-modules to the category of admissible $V$-modules.  Throughout this section, we let $U$ be a module for the associative algebra $A(V)$.

\quad Since $U$ is a module for $A(V)$, and hence for $A(V)_{Lie}$, we can lift $U$ to a module for $(V_{Lie})^0$ via
\[\begin{array}{ccccc}
(V_{Lie})^0& \twoheadrightarrow& A(V)_{Lie} &\to & End(U)\\
a_{|a|-1}&\mapsto& a+O(V) &\mapsto& o(a).
\end{array}\]
Thus, $a_{|a|-1}\in (V_{Lie})^0$ acts as $o(a)$ on $U$.  We want to extend this action to \[(V_{Lie})^{\leq 0}=(V_{Lie})^0\oplus (V_{Lie})^-,\] and we do so by letting $(V_{Lie})^-$ annihilate $U$.  It is not immediately clear that this definition does in fact yield an action of $(V_{Lie})^{\leq 0}$, however, the next result shows that this is indeed the case.

\bigskip
\begin{prop} The linear extension of the map
\begin{eqnarray*}
\phi: (V_{Lie})^{\leq 0}&\to & A(V)_{Lie}\\
 a_{|a|-1} &\mapsto& a+O(V)\quad\quad\; (a_{|a|-1}\in (V_{Lie})^0)\\
 a_n &\mapsto & O(V)\quad\quad\quad\quad (a_n\in (V_{Lie})^-)
 \end{eqnarray*}
is a morphism of Lie algebras.	 \end{prop}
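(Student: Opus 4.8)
The plan is to exploit the fact that $V_{Lie}$ is graded by $\bb{C}$ and that the bracket \eqref{LieCommRlns} adds degrees, so that $\phi$ can be checked to respect brackets one homogeneous degree at a time. Since $\phi$ is linear and both brackets are bilinear, it suffices to verify $\phi([x,y]) = [\phi(x),\phi(y)]$ on homogeneous spanning elements $x,y$ of $(V_{Lie})^{\leq 0} = (V_{Lie})^0\oplus(V_{Lie})^-$, and I would organize the verification according to whether each of $x,y$ lies in $(V_{Lie})^0$ or in $(V_{Lie})^-$. When $x,y\in(V_{Lie})^0$ there is nothing new: $(V_{Lie})^0$ is a Lie subalgebra, so $[x,y]\in(V_{Lie})^0$, and $\phi$ restricted to $(V_{Lie})^0$ is exactly the Lie algebra epimorphism of Lemma~\ref{lielemma}.

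The mixed case, where exactly one of $x,y$ lies in $(V_{Lie})^-$, is handled entirely by the grading. Say $y=b_q\in(V_{Lie})^-$; then $\phi(y)=O(V)$, so the right-hand side $[\phi(x),\phi(y)]$ vanishes. On the other hand $[x,y]$ is homogeneous of degree $\deg(x)+\deg(y)=\deg(y)$, which lies in the same region of $\bb{C}$ defining $(V_{Lie})^-$, so $[x,y]\in(V_{Lie})^-$ and $\phi([x,y])=O(V)$ as well. The same bookkeeping disposes of most of the remaining case $x=a_p,\ y=b_q\in(V_{Lie})^-$: since $Re(\deg x)\leq 0$ and $Re(\deg y)\leq 0$, the homogeneous element $[x,y]$ lies in $(V_{Lie})^-$ (and both sides vanish) unless $\deg(x)+\deg(y)=0$ exactly.

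This isolates the one genuinely new situation, $\deg(x)+\deg(y)=0$ with $x,y\in(V_{Lie})^-$. As in the example preceding the proposition, this forces $\deg(x)=r$ and $\deg(y)=-r$ for some nonzero purely imaginary $r$, so that $x=a_{\wt{a}-1}$, $y=b_{\wt{b}-1}$ with $a\in V^{r}$, $b\in V^{-r}$ and $|a|+|b|=\wt{a}+\wt{b}\in\bb{Z}$. Using \eqref{LieCommRlns} one finds $[x,y]=\sum_{i\geq 0}\binom{\wt{a}-1}{i}(a(i)b)_{\wt{a}+\wt{b}-2-i}$, which lies entirely in $(V_{Lie})^0$: each $a(i)b$ is integrally graded, hence in $V^0$, with $\wt{a(i)b}-1=\wt{a}+\wt{b}-2-i$. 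Consequently
\[\phi([x,y]) \;=\; \sum_{i\geq 0}\binom{\wt{a}-1}{i}a(i)b + O(V) \;=\; Res_z\,(1+z)^{\wt{a}-1}Y(a,z)b + O(V),\]
and since $\phi(x)=\phi(y)=O(V)$, the whole proposition reduces to the single assertion that this residue lies in $O(V)$.

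Proving $Res_z (1+z)^{\wt{a}-1}Y(a,z)b\in O(V)$ is the crux, and the obstacle is that it is not directly one of the residues shown to lie in $O(V)$ by \eqref{resainVr} (each of those carries at least one power of $z$ in the denominator). My plan is to extract it from the two circle products $a\circ b$ and $b\circ a$, both of which lie in $O(V)$ precisely because $r\neq 0$. Writing $b\circ a=Res_w \tfrac{(1+w)^{\wt{b}-1}}{w}Y(b,w)a$, I would rewrite the series $Y(b,w)a$ by the commutativity congruence \eqref{commlemma} and then substitute $z=-w/(1+w)$, so that $1+w=(1+z)^{-1}$ and the residue transforms accordingly. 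Thanks to $|a|+|b|=\wt{a}+\wt{b}$ the resulting exponent $\wt{b}-1-|a|-|b|$ collapses to $-\wt{a}-1$, and after splitting off one factor of $(1+z)$ one is left with a congruence of the shape $b\circ a\equiv a\circ b + Res_z (1+z)^{\wt{a}-1}Y(a,z)b \pmod{O(V)}$. Since both circle products already lie in $O(V)$, the residue is forced into $O(V)$, which is exactly what remained to be shown. I expect this change-of-variable identity to be the only delicate computation; everything else is grading bookkeeping.
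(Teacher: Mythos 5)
Your proposal is correct and is essentially the paper's own proof: after reducing via Lemma \ref{lielemma} and the grading to brackets $[a_{\wt{a}-1},b_{\wt{b}-1}]$ with $a\in V^r$, $b\in V^{-r}$ ($r$ purely imaginary, nonzero) landing in $(V_{Lie})^0$, you identify $\phi([x,y])$ with $Res_z\,(1+z)^{\wt{a}-1}Y(a,z)b$ and establish exactly the paper's claim \eqref{LieClaim} by the same skew-symmetry congruence \eqref{commlemma} and residue change of variables, differing only in which of $a\circ b$, $b\circ a$ you transform (a sign-level relabeling). The grading bookkeeping you make explicit for the mixed and nonzero-total-degree cases is what the paper compresses into its assertion that it suffices to treat $(V_{Lie})^0\cap[(V_{Lie})^-,(V_{Lie})^-]$, so the two arguments coincide.
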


\begin{proof} We have already seen that $\phi$ restricted to $(V_{Lie})^0$ is a morphism of Lie algebras.  Therefore, it suffices to show that the image of the space\[ (V_{Lie})^0\cap [(V_{Lie})^-, (V_{Lie})^-] \] under $\phi$ lies in $O(V)$.
  
Assume that $a_n, b_m\in (V_{Lie})^-$, and $[a_n, b_m]\in (V_{Lie})^0$.  Then it follows that $\wt{a} = \wt{b}=Re(|a|) = Re(|b|)$, and $|a| +|b|\in\bb{Z}$.  We also have $n=m=\wt{a}-1$.  We calculate 
\begin{eqnarray*}
\phi\left([a_{n}, b_m] \right)&=& \phi\left(\left[a_{\wt{a}-1}, b_{\wt{b}-1}\right]\right)\\
&=& \phi\left(\sum_{i\geq 0}{\wt{a}-1\choose i} (a(i)b)_{\wt{a} +\wt{b} -2-i}\right)\\
&=& \phi\left(\sum_{i\geq 0}{\wt{a}-1\choose i} (a(i)b)_{\wt{a(i)b}-1}\right)\\
&=&\sum_{i\geq 0}{\wt{a}-1\choose i} (a(i)b)\\
&=& Res_z\left(Y(a,z)b\;(1+z)^{\wt{a}-1}\right).
\end{eqnarray*}
The lemma then follows from the claim that \begin{equation}
Res_z\left(Y(a,z)b\;(1+z)^{\wt{a}-1}\right)\equiv b\circ a-a\circ b\equiv 0\quad \quad mod\;O(V).\label{LieClaim}\end{equation}
  To see this first congruence, we use the previous lemma and the fact that $|a|+|b|=\wt{a}+\wt{b}$ to calculate $(mod\;O(V))$
\begin{eqnarray*}
a\circ b &=& Res_z\left( Y(a,z)b\;\frac{(1+z)^{\wt{a}-1}}{z}\right) \\
&\equiv & Res_z\left( Y(b, \frac{-z}{1+z})a\;\frac{(1+z)^{\wt{a}-1}}{z}(1+z)^{-\wt{a}-\wt{b}}\right)\\
&=& Res_z\left( Y(b, \frac{-z}{1+z})a\;\frac{(1+z)^{-\wt{b}+1}}{-z}\frac{\partial}{\partial z}\left(\frac{-z}{1+z}\right)\right)\\
&=&Res_w\left(Y(b,w)a\;\frac{(1+w)^{\wt{b}}}{w}\right),
\end{eqnarray*}
where in the last equality we use the formula
\begin{equation*}
Res_w\;g(w) = Res_z\;g(f(z))\frac{\partial}{\partial z}f(z).
\end{equation*}
Then we have $(mod\;O(V))$
\begin{eqnarray*}
a\circ b-b\circ a &\equiv & Res_z\left(Y(a,z)b\;\frac{(1+z)^{\wt{a}-1}}{z}\right) - Res_z\left(Y(a,z)b\;\frac{(1+z)^{\wt{a}}}{z}\right) \\
&=& Res_z\left(Y(a,z)b\;(1+z)^{\wt{a}-1}\right)\left(\frac{1}{z} -\frac{1+z}{z}\right)\\
&=& - Res_z\left(Y(a,z)b\;(1+z)^{\wt{a}-1}\right)
\end{eqnarray*}
which proves \eqref{LieClaim}, and hence the result.\end{proof}

\quad To summarize, we have an action of the Lie subalgebra $(V_{Lie})^{\leq 0} $ on $U$, where $a_{|a|-1}$ acts as $o(a)$, and $ (V_{Lie})^- $ annihilates.  We now consider the induced module 
\[M(U) = Ind_{\mathscr{U}((V_{Lie})^{\leq 0})}^{\mathscr{U}(V_{Lie})}\;U \cong \;\mathscr{U}(V_{Lie})\otimes_{\mathscr{U}((V_{Lie})^{\leq 0})}U\cong S((V_{Lie})^+)\otimes_{\bb{C}}U,\]
where $S(V)$ denotes the symmetric algebra on $V$ and $\mathscr{U}(\cdot)$ denotes the universal enveloping algebra.

\quad Note that $M(U)$ inherits a $\bb{C}$-grading from $S((V_{Lie})^+)$ if we assert that the subspace $U$ has degree 0.  More precisely, we can write 
\[M(U)=M(U)(0)\oplus\bigoplus_{\substack{\mu\in\bb{C}\\Re(\mu)>0}}M(U)(\mu)\]
where $M(U)(0) = U$.

\quad We now define an action of $V$ on $M(U)$ via\begin{equation}\label{action}
Y_{M(U)}(v,z) = \sum_{n\in\bb{Z}}v_n\;z^{-n-1}.\end{equation}
Our goal is to show that a certain quotient of $M(U)$ is an admissible $V$-module.  Note that $(M(U), Y_{M(U)})$ satisfies the requirements to be an admissible $V$-module, except for the Jacobi identity.

\quad It is clear from the commutation relations \eqref{LieCommRlns} that the action \eqref{action} satisfies the weak commutativity relations.  Therefore, to establish the Jacobi identity, it is sufficient to establish the weak associativity.  To do this, we quotient out by the desired relations.

\quad In particular, let $W$ be the subspace of $M(U)$ spanned by the coefficients of $z_0^iz_2^j$ in the expressions
\begin{equation}
(z_0+z_2)^{\wt{a}-1+\delta_{r,0}}Y(a, z_0+z_2)Y(b, z_2)u - (z_2+z_0)^{\wt{a}-1+\delta_{r,0}}Y(Y(a, z_0)b, z_2)u\label{defW}
\end{equation}
 for homogeneous $a\in V^r$, $b\in V$, and $u\in U$.  Now define 
\[\bar{M}(U) = M(U)/\mathscr{U}(V_{Lie})W.\]
Note that $\mathscr{U}(V_{Lie})W$ is a graded $V_{Lie}$-submodule of $M(U)$, and so $\bar{M}(U)$ inherits the $\bb{C}$-grading from $M(U)$.  It is not yet clear that $\bar{M}(U)(0) \neq 0$, but this fact will follow from Theorem \ref{thm1}.
\bigskip
\begin{prop} Let $M$ be a module for $V_{Lie}$ with the property that there is a subspace $U$ of $M$ such that $U$ generates $M$ as a module for $V_{Lie}$ and, for any $a\in V^r$ and any $u\in U$, there is a positive integer $k$ such that 
\begin{equation}
(z_0+z_2)^kY(a, z_0+z_2)Y(b, z_2)u = (z_2+z_0)^kY(a,z_0)Y(b, z_2)u
\end{equation}
for any $b\in V$.  Then $M$ is a weak $V$-module.\end{prop}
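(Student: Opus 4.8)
The plan is to verify the weak-$V$-module axioms for the action $Y_M(v,z)=\sum_n v_n\,z^{-n-1}$, the only substantive one being the Jacobi identity. Lower truncation is immediate from the grading: $v_n$ shifts degrees by $|v|-n-1$, and since the real parts of the degrees occurring in $M$ are bounded below, $v_n w=0$ for $n\gg 0$ and every $w\in M$. The vacuum normalization $Y_M(\bb{1},z)=\mathrm{id}_M$ reduces to $\bb{1}_{-1}=\mathrm{id}_M$: for $n\neq -1$ one has $t^n\otimes\bb{1}=\tfrac{1}{n+1}D(t^{n+1}\otimes\bb{1})\in D\mathscr{L}(V)$ (using $T\bb{1}=0$), so $\bb{1}_n=0$ in $V_{Lie}$, while $\bb{1}_{-1}$ is central and acts on the generating space $U$ via $o(\bb{1})=\bb{1}(-1)=\mathrm{id}$, hence as the identity on all of $M$.

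With truncation and the vacuum property in place, I would appeal to the standard recognition theorem (\cite{LL}, after Li): an action satisfying these two conditions is a weak $V$-module as soon as it satisfies both weak commutativity and weak associativity for all $a,b\in V$ on all of $M$. Weak commutativity is automatic and global, since the bracket \eqref{LieCommRlns} is a finite sum, forcing $(z_1-z_2)^N[Y_M(a,z_1),Y_M(b,z_2)]=0$ for $N\gg 0$. The hypothesis of the proposition is precisely weak associativity, and it already covers every homogeneous $a$ (as $V=\bigoplus_{r}V^r$) and every $b$; its only restriction is to $w=u\in U$. Thus the entire content of the proof is to promote weak associativity from $U$ to all of $M$.

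Since $U$ generates $M$ over $V_{Lie}$, every vector is a sum of expressions $c^s(m_s)\cdots c^1(m_1)u$, and I would establish weak associativity by induction on the number of modes applied to $u$. The inductive step is to show that if the relation $(z_0+z_2)^{k}Y_M(a,z_0+z_2)Y_M(b,z_2)w=(z_2+z_0)^{k}Y_M(Y(a,z_0)b,z_2)w$ holds on $w$, then a like relation holds on $c(m)w$. One moves $c(m)$ to the outside of both sides using the commutator formula \eqref{LieCommRlns}; each commutator replaces $a$ or $b$ by the finitely many vectors $c(i)a$ or $c(i)b$ and leaves $c(m)$ acting on $w$, so all correction terms involve strictly shorter words and are controlled by the inductive hypothesis, once one multiplies through by a sufficiently high power of $(z_0+z_2)$. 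This is the argument of \cite{DLM2} (see also \cite{LL}).

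I expect this propagation step to be the main obstacle, though the difficulty is bookkeeping rather than conceptual. One must choose the exponent $k$ large enough to work simultaneously for all the finitely many terms produced at a given inductive stage, and handle the expansions of $(z_0+z_2)^{k}$ versus $(z_2+z_0)^{k}$ with enough care that the two sides agree as elements of $M[[z_0^{\pm},z_2^{\pm}]]$ and not merely after a further specialization. Once weak associativity is established throughout $M$, the recognition theorem delivers the Jacobi identity, and $M$ is a weak $V$-module.
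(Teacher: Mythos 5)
Your proposal is correct and takes essentially the same route as the paper, whose entire proof is a citation to \cite{DLM2}, Proposition 6.1: you reconstruct exactly that argument---truncation and the vacuum property ($\mathbb{1}_n=0$ for $n\neq -1$ via $D$, with the central element $\mathbb{1}_{-1}$ acting as $o(\mathbb{1})=\mathrm{id}$ on $U$), weak commutativity coming for free from the finiteness of the bracket \eqref{LieCommRlns}, propagation of weak associativity from the generating subspace $U$ to all of $M$ by induction on word length via the commutator formula, and Li's recognition theorem to recover the Jacobi identity (you also silently and correctly repair the typo in the statement, whose right-hand side should read $Y(Y(a,z_0)b,z_2)u$). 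The one step you defer---the bookkeeping with uniform exponents $k$ and the differing expansions of $(z_0+z_2)^k$ versus $(z_2+z_0)^k$---is precisely the content of the cited argument in \cite{DLM2}, so nothing is missing relative to the paper's own proof.
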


\begin{proof} See \cite{DLM2} Proposition 6.1, and take $r=s=0$.\end{proof}

\begin{thm}\label{thm1} The space $\bar{M}(U)$ is an admissible $V$-module.\end{thm}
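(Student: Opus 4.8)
The plan is to verify that $\bar M(U)$ satisfies each of the defining properties of an admissible $V$-module in turn: the grading with nonzero lowest weight space, the grading-compatibility condition \eqref{admissible}, and the full Jacobi identity for the action \eqref{action}. The construction has been set up so that weak commutativity is automatic from \eqref{LieCommRlns}, and we have quotiented $M(U)$ precisely by the relations \eqref{defW} needed to force weak associativity; so the heart of the matter is to check that the module $\bar M(U)$ genuinely meets the hypotheses of the preceding Proposition (the analogue of \cite{DLM2} Proposition 6.1), from which the weak-module structure, and hence the Jacobi identity, follows. The one remaining subtlety is that the grading on $\bar M(U)$ must still have $\bar M(U)(0)\neq 0$, which is deferred but needs to be secured.

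First I would observe that, by construction, $U$ generates $M(U)$ as a $V_{Lie}$-module, and hence $U$ generates the quotient $\bar M(U)$ as well. I would then verify the associativity hypothesis of the Proposition: for any homogeneous $a\in V^r$, any $b\in V$, and any $u\in U$, I must produce a positive integer $k$ with
\[
(z_0+z_2)^k Y(a,z_0+z_2)Y(b,z_2)u = (z_2+z_0)^k Y(a,z_0)Y(b,z_2)u.
\]
This is exactly where passing to $\bar M(U)=M(U)/\mathscr{U}(V_{Lie})W$ does its work: the elements of $W$ are the coefficients of \eqref{defW}, so modulo $\mathscr{U}(V_{Lie})W$ the two sides of \eqref{defW} agree, and combining this with weak commutativity (which rewrites $Y(Y(a,z_0)b,z_2)u$ appropriately) yields the required weak associativity with $k=\wt a-1+\delta_{r,0}$. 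Because we have quotiented by the full $V_{Lie}$-submodule generated by $W$, not merely by $W$ itself, the relation propagates to all of $\bar M(U)$, not just to the action on $U$. Invoking the Proposition then gives that $\bar M(U)$ is a weak $V$-module.

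Next I would handle the grading. Since $\mathscr{U}(V_{Lie})W$ is a graded submodule, $\bar M(U)$ inherits the decomposition $\bar M(U)=\bar M(U)(0)\oplus\bigoplus_{Re(\mu)>0}\bar M(U)(\mu)$ from $M(U)$, and the grading-shift condition \eqref{admissible}, $a(n)\bar M(U)(\mu)\subseteq \bar M(U)(\mu+|a|-n-1)$, is immediate from the definition $deg(a_n)=|a|-n-1$ of the $V_{Lie}$-grading together with $a(n)=a_n$ acting by \eqref{action}. The remaining point is that $\bar M(U)(0)\neq 0$, i.e.\ that the copy of $U=M(U)(0)$ survives in the quotient; this is the genuinely nontrivial assertion, and I would establish it by the standard device (as in \cite{DLM2}) of checking that the degree-$0$ piece of $\mathscr{U}(V_{Lie})W$ is $0$, equivalently that no relation in $W$ has a nonzero component landing in $M(U)(0)=U$.

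The step I expect to be the main obstacle is precisely this last one: controlling $\mathscr{U}(V_{Lie})W\cap M(U)(0)$ and showing the canonical map $U\to\bar M(U)(0)$ is injective. Unlike the integer-graded setting of \cite{DLM2}, here the grading is $\bb C$-valued, and the degree-$0$ subspace can receive contributions from brackets $[a_{\wt a-1},b_{\wt b-1}]$ of elements with conjugate complex weights $|a|=n+i\lambda$, $|b|=n-i\lambda$ — exactly the phenomenon noted earlier that obstructs $(V_{Lie})^-$ from being a subalgebra. I would therefore need to check carefully that the defining relations of $A(V)$ (the ideal $O(V)$, and in particular the congruence \eqref{LieClaim} governing such brackets) are exactly what guarantee that these potentially dangerous degree-$0$ terms are already killed, so that the $A(V)$-module structure on $U$ is consistent with the quotient and $U$ embeds as $\bar M(U)(0)$. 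Once this consistency is verified, all three module axioms are in place and $\bar M(U)$ is an admissible $V$-module.
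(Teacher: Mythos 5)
Your proposal is correct and follows essentially the same route as the paper: the weak-module structure comes from the preceding Proposition (the analogue of \cite{DLM2}, Proposition 6.1) applied with generating subspace $U$, the grading is inherited from $M(U)$, and the one genuinely nontrivial point, $\bar{M}(U)(0)\neq 0$, is deferred — the paper settles it in the proof of Theorem \ref{thm2} by showing $\mathscr{U}(V_{Lie})W\subseteq J$ via matrix-coefficient lemmas whose key input is that $O(V)$ annihilates $U$ (i.e.\ $o(a\circ b)u=0$), which is exactly the mechanism you flag. Your reformulation of that step (the degree-zero piece of $\mathscr{U}(V_{Lie})W$ vanishes) is equivalent, since a graded submodule meets $U$ trivially if and only if it lies in the maximal such submodule $J$.
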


\begin{proof}  Clearly $\bar{M}(U)$ satisfies the conditions placed on $M$ in the previous proposition, so we find that $\bar{M}(U)$ is a weak $V$-module.  Moreover, by previous remarks, we know that $\bar{M}(U)$ satisfies the grading requirements, except for possibly the requirement that $\bar{M}(U)(0) \neq 0$, but this is demonstrated in the proof of Theorem \ref{thm2}.\end{proof}

\quad We now describe the construction of the $\Lambda$ functor.  Note that $M(U)$ has a maximal graded $V_{Lie}$-submodule $J$ with the property that $J\cap U=0$.  The main result of this section is the following:
\bigskip

\begin{thm} \label{thm2}The space $\Lambda(U)=M(U)/J$ is an admissible $V$-module with the property that the lowest weight space of $\Lambda(U)$ is $U$.\end{thm}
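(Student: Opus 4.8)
The plan is to realize $\Lambda(U)$ as a quotient of the weak module $\bar{M}(U)$ produced in Theorem \ref{thm1}, and to pin down its bottom. First note that the grading claim is essentially formal. Since $J$ is a graded $V_{Lie}$-submodule and the grading of $M(U)$ satisfies $M(U)(0)=U$ with all other homogeneous pieces of strictly positive real degree, the quotient inherits a grading
\[\Lambda(U)=\Lambda(U)(0)\oplus\bigoplus_{Re(\mu)>0}\Lambda(U)(\mu),\qquad \Lambda(U)(0)=U/(J\cap U).\]
By the definition of $J$ we have $J\cap U=0$, so $\Lambda(U)(0)=U\neq 0$ and the lowest weight space is exactly $U$, as claimed. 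The remaining, and substantive, point is that $\Lambda(U)$ satisfies the Jacobi identity, i.e.\ that it is a genuine weak, hence admissible, $V$-module.

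To get this I would prove the single inclusion $\mathscr{U}(V_{Lie})W\subseteq J$. For a graded submodule $K$ one has $K\cap U=K(0)$, and the degree-$0$ part of a sum of graded submodules is the sum of their degree-$0$ parts; hence the graded submodules meeting $U$ trivially have a unique maximum, namely $J$, which therefore contains every graded submodule $K$ with $K\cap U=0$. Since $\mathscr{U}(V_{Lie})W$ is such a submodule, the inclusion is equivalent to
\[\mathscr{U}(V_{Lie})W\cap U=0.\]
Call this condition $(\ast)$. Granting $(\ast)$, the surjection $M(U)\to\Lambda(U)$ factors through $\bar{M}(U)=M(U)/\mathscr{U}(V_{Lie})W$, so $\Lambda(U)$ is a quotient of the weak $V$-module $\bar{M}(U)$ and is therefore itself a weak, and then admissible, $V$-module. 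As a bonus, $(\ast)$ also yields $\bar{M}(U)(0)=U\neq 0$, settling the point deferred after Theorem \ref{thm1}.

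The heart of the matter, and the step I expect to be hardest, is $(\ast)$: the defining relations of $W$ must impose no relation on the bottom space $U$. Following \cite{DLM2}, I would project $\mathscr{U}(V_{Lie})W$ onto degree $0$. By Poincar\'e--Birkhoff--Witt a degree-$0$ element is a combination of monomials in $V_{Lie}$ applied to the homogeneous coefficients of \eqref{defW}; commuting the lowering operators of $(V_{Lie})^-$ through these coefficients via \eqref{LieCommRlns}, and collapsing the degree-preserving $(V_{Lie})^0$-operators to zero modes, should reduce every such element to a sum of terms $o(x)u$ with $u\in U$ and $x$ a circle product, i.e.\ $x\in O(V)$. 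Because $U$ is an $A(V)=V/O(V)$-module, the earlier fact that $O(V)$ annihilates the bottom space forces each such term to vanish, establishing $(\ast)$. The delicate bookkeeping is matching the powers of $z_0,z_2$ in \eqref{defW} against the weight shifts in \eqref{LieCommRlns} so that the degree-$0$ projection lands inside $O(V)\cdot U$; this is precisely where the $\bb{C}$-grading of our setting demands more care than the integral situation treated in \cite{DLM2}.
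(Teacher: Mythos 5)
Your argument is correct and essentially the paper's own: the reduction of Theorem \ref{thm2} to the single inclusion $\mathscr{U}(V_{Lie})W\subseteq J$ is exactly the paper's strategy, and your condition $(\ast)$ is equivalent to the paper's characterization \eqref{charJ} of $J$, since for a graded submodule $N$ one has $N\subseteq J$ if and only if $N\cap U=0$. The ``delicate bookkeeping'' you defer is precisely what the paper's residue lemmas carry out (the analogues of Lemmas 6.7--6.8 and Propositions 6.5, 6.9, 6.10 of \cite{DLM2}, adapted to $a\in V^r$ with $r\neq 0$ via the ceiling weights $\wt{a}$), the crux being that the critical $i=0$ residue collapses to $o(a\circ b)u$ with $a\circ b\in O(V)$, which vanishes because $O(V)$ annihilates $U$ --- exactly the mechanism you predict --- and note that identifying the full lowest weight space (not merely the degree-zero component) with $U$ uses the same maximality observation you already made, applied to the submodule generated by a putative lowest weight vector outside $U$.
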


\quad The proof of this theorem relies on several results.  First, it is easy to see that the $V_{Lie}$-submodule $J$ can be described alternatively as
\begin{equation}
J =\{v\in M(U)\;|\;\left< u', xv\right> = 0\;\text{ for all } u'\in U^*,\;x\in\mathscr{U}(V_{Lie})\}\label{charJ}
\end{equation}
where $U^*$ is the set of linear functionals of $U$ which are trivially extended to all of $M(U)$.  Then to prove the theorem, we must show that $\mathscr{U}(V_{Lie})W\subseteq J$.

\quad The next lemma and its proof are slight modifications of Lemma 6.7 of \cite{DLM2}.  We let $u'\in U^*$, and $u\in U$.
\bigskip
\begin{lem} Assume $a\in V^r$ for some nonzero $r$.  Then, for any $i,j\geq 0$, we have
\begin{eqnarray*}
&&Res_{z_0}z_0^{-1+i}(z_0+z_2)^{\wt{a}-1+j}\left< u', Y(a, z_0+z_2)Y(b, z_2)u\right> \\
&&\quad =Res_{z_0}z_0^{-1+i}(z_2+z_0)^{\wt{a}-1+j}\left< u', Y(Y(a, z_0)b, z_2)u\right> 
\end{eqnarray*}
for any $b\in V$.\end{lem}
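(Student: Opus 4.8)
\emph{Approach.} The plan is to collapse both sides to explicit finite combinations of the zero modes $o(a(k)b)$ acting on $u$, and then to match these combinations using the residue relations \eqref{resainVr} together with the fact that $U$ is a module for $A(V)=V/O(V)$. By linearity we may take $a,b$ homogeneous; since $u'$ is supported on the lowest weight space $U=M(U)(0)$, the pairing $\langle u',\,\cdot\,\rangle$ extracts the degree-$0$ component, and \eqref{admissible} shows that $\langle u', a(n)b(m)u\rangle$ can be nonzero only when $(|a|-n-1)+(|b|-m-1)=0$, i.e. $m=|a|+|b|-n-2$; in particular everything vanishes unless $|a|+|b|\in\bb{Z}$, which we henceforth assume.

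\emph{The collapse of the left-hand side.} Two properties of the lowest weight vector $u$ do all the work, and it is here that the hypothesis $r\neq0$ enters. First, since $a\in V^r$ with $r\neq0$, the mode $a(n)$ shifts degree by $|a|-n-1$, and for $n\geq\wt{a}-1$ this quantity has negative real part (or, in the case $Re(|a|)=\wt{a}$, is nonzero and purely imaginary); hence $a(n)u=0$ for all $n\geq\wt{a}-1$. Second, along the diagonal $m=|a|+|b|-n-2$ singled out by $u'$, the operator $b(m)$ shifts degree by $|b|-m-1=n+1-|a|$, which for $n\leq\wt{a}-2$ has negative real part, so $b(m)u=0$. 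Consequently, working degree-by-degree in $z_2$ so that only finite sums occur, each term $a(n)b(m)u$ either drops (when $n\leq\wt{a}-2$, by the second property) or, after commuting $a(n)$ past $b(m)$ via \eqref{LieCommRlns}, reduces to $[a(n),b(m)]u=\sum_{l\geq0}\binom{n}{l}o(a(l)b)u$, the left-over term $b(m)a(n)u$ being killed by the first property. Thus the left-hand side becomes a finite combination of the $\langle u',o(a(l)b)u\rangle$.

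\emph{Matching.} The right-hand side is read off directly: $\langle u',Y(Y(a,z_0)b,z_2)u\rangle$ is already a combination of the zero modes $o(a(k)b)$. Applying $Res_{z_0}z_0^{-1+i}(z_0+z_2)^{\wt{a}-1+j}$ to the collapsed left-hand side and $Res_{z_0}z_0^{-1+i}(z_2+z_0)^{\wt{a}-1+j}$ to the right, both sides take the form $z_2^{E}\sum_l c_l\langle u',o(a(l)b)u\rangle$ with $E=\wt{a}-1+i+j-|a|-|b|$. It remains to compare the coefficients $c_l$. This is not a bare identity of binomial coefficients: the two sides agree only \emph{modulo} the relations $\sum_l\binom{\wt{a}-1+p}{l+q+1}\langle u',o(a(l)b)u\rangle=0$ $(q\geq p\geq0)$, which hold because the residues in \eqref{resainVr} lie in $O(V)$ and hence their zero modes annihilate $u$. (For instance, at $i=j=0$ the left-hand side is $0$ while the right-hand side is $\langle u',o(a\circ b)u\rangle$, which vanishes precisely because $a\circ b\in O(V)$.) Verifying that the difference of the coefficient vectors lies in the span of these relation vectors is the required residue computation, essentially that of \cite{DLM2} Lemma 6.7 and \cite{Z}.

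\emph{Main obstacle.} The delicate point is the bookkeeping forced by the two expansion conventions. Because $\wt{a}-1+j$ may be a negative integer, $(z_0+z_2)^{\wt{a}-1+j}$ (expanded in nonnegative powers of $z_2$) and $(z_2+z_0)^{\wt{a}-1+j}$ (expanded in nonnegative powers of $z_0$) genuinely differ, and this mismatch is exactly what the \eqref{resainVr}-relations absorb. One must also be careful to carry out the collapse of the left-hand side degree-by-degree in $z_2$, rather than splitting the convergent matrix coefficient into separately divergent ``commutator'' and ``opposite-order'' pieces. The whole argument rests on the annihilation $a(n)u=0$ for $n\geq\wt{a}-1$ supplied by $r\neq0$; this is the modification of \cite{DLM2} Lemma 6.7 needed in the $\bb{C}$-graded setting.
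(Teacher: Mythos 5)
You have assembled the right ingredients, and they are exactly the ones the paper's proof runs on: the annihilation $a(n)u=0$ for $n\geq\wt{a}-1$ forced by $r\neq 0$ (these modes lie in $(V_{Lie})^-$), the companion vanishing of $b(m)u$ along the degree-zero diagonal picked out by $u'$, the reduction of both sides to combinations of $\left<u',o(a(l)b)u\right>$, and the observation that the $i=j=0$ case is precisely $o(a\circ b)u=0$ --- this last point is the paper's claim \eqref{casei=j=0pt2}, and your diagonal collapse is essentially how the paper proves \eqref{casei=j=0pt1}. The gap is that your central ``Matching'' step is asserted rather than carried out: showing that the difference of the two coefficient vectors lies in the span of the relation vectors coming from \eqref{resainVr} is, in your own words, ``the required residue computation,'' and nothing in the write-up performs it. Since that verification is the entire content of the lemma, what you have is a correct reduction plus an appeal to \cite{DLM2} --- and the appeal is shaky, because neither \cite{DLM2} nor this paper ever proves the identity by comparing coefficient vectors modulo a relation span; that particular binomial-matching problem, with the two expansion conventions for $(z_0+z_2)^{\wt{a}-1+j}$ versus $(z_2+z_0)^{\wt{a}-1+j}$ genuinely differing, is nontrivial and is left unproven.

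The missing idea is the commutator-formula bridge, which also shows your framing is harder than necessary. For $i\geq 1$ the identity holds \emph{exactly}, with no use of $O(V)$ at all and no pairing against $u'$ needed: substitute $z_1=z_0+z_2$, use the annihilation to get $Res_{z_1}(z_1-z_2)^iz_1^{\wt{a}-1+j}Y(b,z_2)Y(a,z_1)u=0$, so the ordered product may be replaced by the commutator, and then
\begin{equation*}
[Y(a,z_1),Y(b,z_2)]=Res_{z_0}\,z_2^{-1}\delta\left(\frac{z_1-z_0}{z_2}\right)Y(Y(a,z_0)b,z_2)
\end{equation*}
together with standard $\delta$-function manipulations converts the left side into the right side in one stroke. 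The relations from $O(V)$ enter only at the boundary case $i=0$, where expanding $z_0^{-1}(z_0+z_2)^{\wt{a}-1+j}=\sum_k\binom{j}{k}(z_0+z_2)^{\wt{a}-1}z_0^{k-1}z_2^{j-k}$ reduces everything to the two vanishing claims \eqref{casei=j=0pt1} and \eqref{casei=j=0pt2}; the first follows from annihilation alone (only $\left<u',a(\wt{a}-1)b(\wt{b}-1)u\right>$ survives, and $a(\wt{a}-1)$ kills $U$), and the second uses only the single relation $o(a\circ b)u=0$, i.e.\ the $p=q=0$ member of your family --- not the whole span. To repair your proposal, replace the ``Matching'' paragraph by this two-case argument; your collapse computations then become exactly the proofs of the two $i=0$ claims rather than an input to an unverified coefficient comparison.
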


\begin{proof} Since $j\geq 0$ and $|a|\neq \wt{a}$, we know that $a(\wt{a}-1+j)$ is an element of $(V_{Lie})^-$, and therefore annihilates $u$.  Then, for any $i\geq 0$, we have
\[Res_{z_1}(z_1-z_2)^iz_1^{\wt{a} -1+j}Y(b, z_2)Y(a, z_1)u = 0.\]  Note that we also have the commutator formula
\[ [Y(a, z_1), Y(b, z_2)] = Res_{z_0}z_2^{-1}\delta\left(\frac{z_1-z_0}{z_2}\right)Y(Y(a, z_0)b, z_2)\]
as operators on $M$.  Then
\begin{eqnarray*}
&&Res_{z_0}z_0^i(z_0+z_2)^{\wt{a}-1+j}Y(a, z_0+z_2)Y(b, z_2) u \\
&&\quad =Res_{z_1}(z_1-z_2)^iz_1^{\wt{a}-1+j}Y(a, z_1)Y(b, z_2) u\\
&&\quad =Res_{z_1}(z_1-z_2)^iz_1^{\wt{a}-1+j}[Y(a, z_1),Y(b, z_2) ]u\\
&&\quad =Res_{z_0}Res_{z_1}(z_1-z_2)^iz_1^{\wt{a}-1+j}z_2^{-1}\delta\left(\frac{z_1-z_0}{z_2}\right)Y(Y(a, z_0)b, z_2)u\\
&&\quad =Res_{z_0}Res_{z_1}z_0^iz_1^{\wt{a}-1+j}z_1^{-1}\delta\left(\frac{z_2+z_0}{z_1}\right)Y(Y(a, z_0)b, z_2)u\\
&&\quad =Res_{z_0}z_0^i(z_2+z_0)^{\wt{a}-1+j}Y(Y(a, z_0)b, z_2)u,
\end{eqnarray*}
which proves the lemma for $i\geq 1$.  Now assume that $i=0$.  Then
\begin{eqnarray*}
&&Res_{z_0}z_0^{-1}(z_0+z_2)^{\wt{a}-1+j}\left<u',Y(a, z_0+z_2)Y(b, z_2) u\right> \\
&&\quad =\sum_{k=0}^{\infty}{j\choose k}Res_{z_0}(z_0+z_2)^{\wt{a}-1}z_0^{k-1}z_2^{j-k}\left<u',Y(a, z_0+z_2)Y(b, z_2) u\right> \\
&&\quad =\sum_{k=1}^{\infty}{j\choose k}Res_{z_0}(z_2+z_0)^{\wt{a}-1}z_0^{k-1}z_2^{j-k}\left<u',Y(Y(a, z_0)b, z_2) u\right> \\
&&\quad\quad \quad+Res_{z_0}(z_0+z_2)^{\wt{a}-1}z_0^{-1}z_2^{j}\left<u',Y(a, z_0+z_2)Y(b, z_2) u\right> 
\end{eqnarray*}

The lemma then follows from the claims that
\begin{equation}\label{casei=j=0pt1}
Res_{z_0}z_0^{-1}(z_0+z_2)^{\wt{a}-1}\left<u',Y(a, z_0+z_2)Y(b, z_2) u\right>=0
\end{equation}
and
\begin{equation}\label{casei=j=0pt2}
Res_{z_0}z_0^{-1}(z_2+z_0)^{\wt{a}-1}\left<u', Y(Y(a, z_0)b, z_2)u\right>=0.
\end{equation}

\quad To see \eqref{casei=j=0pt1}, recall that $b(\wt{b}-1-n)u = 0$ since $b_{\wt{b}-1+n}\in(V_{Lie})^-$ for $n\leq 1$.  Then we have 
\[\left< u', Y(a, z_0+z_2)Y(b, z_2)u\right> = \sum_{n\geq 0}\left< u', a(\wt{a}-1+n)b(\wt{b}-1-n)u\right>(z_0+z_2)^{-\wt{a}-n}z_2^{-\wt{b}+n}.\]
This gives
\begin{eqnarray*}
&&Res_{z_0}z_0^{-1}(z_0+z_2)^{\wt{a}-1}\left<u',Y(a, z_0+z_2)Y(b, z_2) u\right> \\
&&\quad =Res_{z_0}\sum_{n\geq 0}\left< u', a(\wt{a}-1+n)b(\wt{b}-1-n)u\right>(z_0+z_2)^{-n-1}z_2^{-\wt{b}+n-1}\\
&&\quad =Res_{z_0}\sum_{n\geq 0}\sum_{k\geq 0}{-n-1\choose k}\left< u', a(\wt{a}-1+n)b(\wt{b}-1-n)u\right>z_2^{-\wt{b}+n-1+k}z_0^{-n-1-k}\\
&&\quad =\left< u', a(\wt{a}-1)b(\wt{b}-1)u\right>z_2^{-\wt{b}-1}\\
&&\quad = 0,
\end{eqnarray*}
where the last equality holds because $a\in V^r$, so $a(\wt{a}-1)$ annihilates $b(\wt{b}-1)u$.  This proves \eqref{casei=j=0pt1}.

\quad An analogous calculation shows that 
\begin{eqnarray*}
&&Res_{z_0}z_0^{-1}(z_2+z_0)^{\wt{a}-1}\left<u', Y(Y(a, z_0)b, z_2)u\right>\\
&&\quad = \sum_{k\geq 0} {\wt{a}-1\choose k}\left<u', (a(k-1)b)(|a(k-1)b|-1)u\right>z_2^{-|a|-|b|+\wt{a}-1},\\
\end{eqnarray*}
where we have made the assumption that $\wt{a(j)b} = |a(j)b|$ for $j\in\bb{Z}$.  This assumption is valid because otherwise the element $a(j)b$ would have no zero mode, which would imply that 
\[\left< u', (a(j)b)(k)u\right> = 0\]
for any $k\in\bb{Z}$.
Then \eqref{casei=j=0pt2} follows from the fact that
\[\sum_{k\geq 0} {\wt{a}-1\choose k} (a(k-1)b)(|a(k-1)b|-1)u = o(a\circ b)u =0\]
since $O(V)$ annihilates $U$.  \end{proof}
\bigskip

\begin{lem}  Assume $a\in V^r$ for some nonzero $r$ and $j\geq 0$, but $i\in \bb{Z}$.  Then we have
\begin{eqnarray*}
&&Res_{z_0}z_0^{-1+i}(z_0+z_2)^{\wt{a}-1+j}\left< u', Y(a, z_0+z_2)Y(b, z_2)u\right> \\
&&\quad =Res_{z_0}z_0^{-1+i}(z_2+z_0)^{\wt{a}-1+j}\left< u', Y(Y(a, z_0)b, z_2)u\right> 
\end{eqnarray*}
for any $b\in V$.\end{lem}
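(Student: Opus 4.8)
The plan is to read the asserted identity, for a fixed $j\ge 0$ and all $i\in\bb{Z}$ at once, as the single statement that the formal series
\[
H_j \;=\; (z_0+z_2)^{\wt{a}-1+j}\langle u',Y(a,z_0+z_2)Y(b,z_2)u\rangle \;-\;(z_2+z_0)^{\wt{a}-1+j}\langle u',Y(Y(a,z_0)b,z_2)u\rangle
\]
vanishes identically. Indeed $Res_{z_0}z_0^{-1+i}H_j$ is exactly the difference of the two sides, so the lemma for all $i\in\bb{Z}$ is equivalent to the vanishing of every coefficient of a power of $z_0$ in $H_j$. Since $j\ge 0$ the factor $(z_0+z_2)^{j}$ is a genuine polynomial and coincides with $(z_2+z_0)^{j}$, so it pulls out of both terms and $H_j=(z_0+z_2)^{j}H_0$. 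Because multiplication by $(z_0+z_2)^{j}$ is injective on formal series, it suffices to prove $H_0=0$.

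I would then split the powers $z_0^{k}$ (writing $k=-i$) into the ranges $k\le 0$ and $k\ge 0$. For $k\le 0$, i.e. $i\ge 0$, the vanishing of the $z_0^{k}$-coefficient of $H_0$ is precisely the preceding lemma specialized to $j=0$, which we may assume. For $k\ge 0$ I claim that each of the two terms of $H_0$ has vanishing $z_0^{k}$-coefficient \emph{separately}. In the first term only modes $a(s)b(t)u$ with $s+t=|a|+|b|-2$ survive the pairing with $u'$, and $b(t)u\neq 0$ forces $b(t)u\in M(s+1-|a|)$ to have nonnegative real part; since $|a|\notin\bb{Z}$ (because $a\in V^{r}$, $r\neq 0$) and $Re(|a|)\in(\wt{a}-1,\wt{a}]$, this means $s\ge\wt{a}-1$. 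Hence inside this matrix coefficient $(z_0+z_2)^{\wt{a}-1}Y(a,z_0+z_2)$ is a sum of terms $a(s)(z_0+z_2)^{\wt{a}-2-s}$ with exponent $\le -1$, and expanding in $|z_0|>|z_2|$ produces only strictly negative powers of $z_0$; so its $z_0^{k}$-coefficient is $0$ for every $k\ge 0$. For the second term a direct residue computation (as in the $i=0$ case of the preceding lemma) identifies the $z_0^{k}$-coefficient of $(z_2+z_0)^{\wt{a}-1}\langle u',Y(Y(a,z_0)b,z_2)u\rangle$, up to one fixed power of $z_2$, with $\langle u',o(\theta_k)u\rangle$, where
\[
\theta_k \;=\; Res_z\,\frac{(1+z)^{\wt{a}-1}}{z^{1+k}}\,Y(a,z)b .
\]
By \eqref{resainVr} (with $\delta_{r,0}=0$, $n=0$ and $m=k\ge 0$) we have $\theta_k\in O(V)$, and since $O(V)$ annihilates $U$ the mode $o(\theta_k)$ kills $u$. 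Thus the $z_0^{k}$-coefficient of $H_0$ vanishes for all $k\ge 0$, and together with the range $k\le 0$ this gives $H_0=0$, whence $H_j=(z_0+z_2)^{j}H_0=0$ and the lemma follows.

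The main obstacle is the second term in the range $k\ge 0$: one must run the residue bookkeeping carefully and, crucially, recognize the resulting coefficient as the zero mode of an element of $O(V)$. This is the exact analogue of the step that produced $o(a\circ b)$ in the preceding lemma, but the higher-order pole $z^{-1-k}$ now forces us to invoke the full family of elements of $O(V)$ furnished by \eqref{resainVr} rather than the single circle product. The hypothesis $a\in V^{r}$, $r\neq 0$, enters twice and is essential: it forces the surviving $a$-modes in the first term to satisfy $s\ge\wt{a}-1$ (so that term contributes only negative powers of $z_0$), and it makes $\delta_{r,0}=0$, so that \eqref{resainVr} applies with the correct exponent $\wt{a}-1$ and yields $\theta_k\in O(V)$.
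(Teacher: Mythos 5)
Your proof is correct, but it takes a genuinely different route from the paper's. The paper disposes of the new range $i\le -1$ by downward induction on $i$, using the $T$-derivative property $Y(Ta,z)=\frac{\partial}{\partial z}Y(a,z)$ (note $Ta\in V^r$ again, with $\wt{Ta}=\wt{a}+1$) and citing \cite{DLM2}, Lemma 6.8, for the details, the base case being the preceding lemma. You instead prove the full generating-series identity $H_j=0$ at once, and for the genuinely new coefficients ($k\ge 0$, i.e.\ $i\le 0$) you show each side vanishes \emph{separately}: the left side because the pairing and grading force the surviving $a$-modes to have $s\ge\wt{a}-1$, so after multiplying by $(z_0+z_2)^{\wt{a}-1}$ only strictly negative powers of $z_0$ occur; the right side because its $z_0^k$-coefficient is, up to one power of $z_2$, $\left<u',o(\theta_k)u\right>$ with $\theta_k\in O(V)$ by \eqref{resainVr} (taking $\delta_{r,0}=0$, $n=0$, $m=k$), and $o(O(V))$ annihilates $U$. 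This is a real trade: the paper's induction needs only the single element $a\circ b\in O(V)$ and generates the higher poles on the fly, whereas your argument is induction-free at this point precisely because the inductive work was already banked in the proof of \eqref{resainVr} (itself established by the same $T$-derivative trick, after Zhu's Lemma 2.1.2); in exchange, your proof explains structurally why \eqref{resainVr} was proved for all $m\ge n\ge 0$ rather than just $m=n=0$ --- the higher-order poles are exactly what the coefficients at $i\le -1$ see. Two small blemishes, neither fatal: your claim that multiplication by $(z_0+z_2)^j$ is injective on formal series is false in two variables (delta-function kernels, e.g.\ $(z_0+z_2)\sum_{n\in\bb{Z}}(-1)^n z_0^n z_2^{-n}=0$), but you only use the trivial direction $H_0=0\Rightarrow H_j=(z_0+z_2)^jH_0=0$, so nothing is lost; and in the first-term analysis you should explicitly rule out the borderline case where $b(t)u$ has degree exactly $0$ --- it would force $s=|a|-1\in\bb{Z}$, impossible since $a\in V^r$ with $r\neq 0$ gives $|a|\notin\bb{Z}$ --- after which $s\ge\wt{a}-1$ follows as you say.
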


\begin{proof} The result holds for $i\geq -1$ by the previous lemma.  The proof then follows from the $T$ derivative property together with induction on $i$.  A complete proof is given in \cite{DLM2} Lemma 6.8.\end{proof}

\bigskip
\begin{prop} The following holds for any homogeneous $a\in V^r$, $b\in V$, $u'\in U^*$, $u\in U$ and $j\geq 0$:
\begin{eqnarray}
&&\left< u', (z_0+z_2)^{\wt{a}-1+\delta_{r,0}+j}Y_{M(U)}(a, z_0+z_2)Y(b, z_2)u\right>\notag\\
&&\quad\quad=\left< u', (z_2+z_0)^{\wt{a}-1+\delta_{r,0}+j}Y_{M(U)}(Y(a, z_0)b, z_2)u\right>.\label{prop6.5}
\end{eqnarray}\end{prop}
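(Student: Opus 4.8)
The plan is to recognize \eqref{prop6.5} as an identity of formal series in $z_0,z_2$ that is equivalent to the family of residue identities
\[
Res_{z_0}z_0^{-1+i}(z_0+z_2)^{\wt{a}-1+\delta_{r,0}+j}\langle u', Y(a, z_0+z_2)Y(b, z_2)u\rangle=Res_{z_0}z_0^{-1+i}(z_2+z_0)^{\wt{a}-1+\delta_{r,0}+j}\langle u', Y(Y(a, z_0)b, z_2)u\rangle,
\]
taken over all $i\in\bb{Z}$: applying $Res_{z_0}z_0^{-1+i}$ for every $i$ recovers each coefficient of a power of $z_0$, so equality of all these residues is equivalent to equality of the two series. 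I would then split according to whether $r=0$.

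If $r\neq 0$ then $\delta_{r,0}=0$, and the displayed residue identity with exponent $\wt{a}-1+j$ is precisely the content of the immediately preceding lemma, valid for every $i\in\bb{Z}$ and every $j\geq 0$. Reassembling the coefficients over all $i$ therefore yields \eqref{prop6.5} at once in this case.

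The case $r=0$ is where the real work lies; here $\delta_{r,0}=1$ and $\wt{a}=|a|$, so the exponent is $\wt{a}+j$ and $a$ behaves like an integer-weight vertex operator. This is exactly the situation treated in \cite{DLM2}, and I would follow that argument, which parallels the two preceding lemmas. For $i\geq 1$, and for every part of the $i=0$ expansion that carries a nonnegative power of $z_0$, the relevant modes $a(N)$ with $N\geq\wt{a}$ lie in $(V_{Lie})^-$ and annihilate $u$, so the commutator formula and the delta-function manipulation used above carry the identity through as before. The genuinely new ingredient is the pure residue $Res_{z_0}z_0^{-1}(z_0+z_2)^{\wt{a}}\langle u', Y(a,z_0+z_2)Y(b,z_2)u\rangle$, in which the zero mode $o(a)=a(\wt{a}-1)$ survives and does not annihilate $u$; this boundary term does not vanish termwise but is matched against its counterpart on the right, their difference being $o(a\circ b)u$, which is zero because $a\circ b\in O(V)$ and $O(V)$ annihilates $U$. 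Finally the extension to $i<0$ is obtained by downward induction on $i$ using the $T$-derivative property, exactly as in \cite{DLM2} Lemma 6.8.

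The main obstacle is precisely this $r=0$ boundary: losing the annihilation $a(\wt{a}-1)u=0$ that drove the $r\neq 0$ lemmas forces one to invoke the associative-algebra relation $O(V)U=0$, in the guise of $o(a\circ b)u=0$, in order to reconcile the two boundary terms, and then to carry the identity to negative $i$ by induction. Once both cases are assembled coefficient-by-coefficient over all $i\in\bb{Z}$, the stated equality \eqref{prop6.5} follows.
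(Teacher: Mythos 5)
Your overall architecture matches the paper's proof: the paper also treats \eqref{prop6.5} coefficient-by-coefficient, disposes of the case $r\neq 0$ by the two preceding residue lemmas, and defers the integral case to \cite{DLM2}. The gap is inside your $r=0$ case. You assert that once $\wt{a}=|a|$ ``this is exactly the situation treated in \cite{DLM2},'' but Proposition 6.5 of \cite{DLM2}, in the untwisted specialization needed here, assumes the whole algebra is integrally graded: it covers $a\in V^0$ paired against $b$ of \emph{integral} weight only. If $b\in V^s$ with $s\neq 0$, the boundary step you sketch breaks down: each $a(i)b$ then also lies in $V^s$, has non-integral weight, and therefore has no zero mode at all, so the quantity ``$o(a\circ b)u$'' by which you reconcile the two boundary terms is not defined as written, and the identification of the pure residue on the right-hand side with a zero-mode action fails. (A smaller imprecision: even for $s=0$ and $j>0$, the boundary difference is the zero mode not of $a\circ b$ itself but of the more general elements of $O(V)$ produced by \eqref{resainVr}; this is harmless since $o(x)U=0$ for all $x\in O(V)$, but worth stating.)

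The paper closes the problematic subcase separately and cheaply: for $r=0$, $b\in V^s$ with $s\neq 0$, it observes that \emph{both} sides of \eqref{prop6.5} vanish identically, because the modes of $b$ (and of each $a(i)b$) shift the degree of $u\in U$ by an amount in $s+\bb{Z}$ while the modes of $a$ shift by integers, so every vector produced lies outside the degree class of $U$ and pairs to zero against $u'\in U^*$; the paper phrases this as $b(i)V^0\cap V^0=0$. With that subcase removed, the appeal to \cite{DLM2} is legitimate exactly when $r=s=0$, and the rest of your argument --- the reduction to residues over all $i\in\bb{Z}$ and the $r\neq 0$ case via the two lemmas --- is correct as it stands. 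So your proof is repaired by inserting this one vanishing observation rather than invoking \cite{DLM2} uniformly for all $b$.
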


\begin{proof} Assume that $b\in V^s$ for some $s$.  If $r=0$ and $s\neq 0$, then both sides of \eqref{prop6.5} are equal to 0.  This follows from the fact that the modes of $a$ act on $V^0$, but the modes of $b$ do not, i.e., for all $i\in\bb{Z}$,
\[b(i)V^0\cap V^0 = 0.\]
\quad Now assume that $r = s = 0$.  Then this is essentially the case where $V=V^0$, so we appeal to \cite{DLM2} Proposition 6.5.  The only remaining case is where $r\neq 0$, and this case follows from the previous two lemmas.\end{proof}
\bigskip

\begin{prop} Assume that $M$ is a $V_{Lie}$-module and $W$ is a subspace of $M$ such that $W$ generates $M$ as a $V_{Lie}$-module.  Assume further that for any $a\in V^r$, and any $w\in W$, that these is some $k\in\bb{Z}$ such that
\begin{eqnarray}
&&\left< w', (z_0+z_2)^kY_{M(U)}(a, z_0+z_2)Y(b, z_2)w\right>\notag\\
&&\quad\quad=\left< w', (z_2+z_0)^kY_{M(U)}(Y(a, z_0)b, z_2)w\right>.\label{prop6.9}
\end{eqnarray}
for any $b\in V$ and any $w'\in W^*$.  Then \eqref{prop6.9} holds for any $u\in M$.\end{prop}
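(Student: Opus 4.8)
The plan is to prove the relation by induction on the number of $V_{Lie}$-modes needed to express the vector on which the operators act, after first strengthening the statement so that $w'$ ranges over all homogeneous functionals rather than only those in $W^*$. Since both sides of \eqref{prop6.9} are linear in the vector and $M=\mathscr{U}(V_{Lie})W$, it suffices to prove the identity when $u=c^{(1)}(m_1)\cdots c^{(s)}(m_s)w$ is a monomial with $w\in W$, and to induct on $s$. For the base case $s=0$ (so $u=w\in W$) the identity is the hypothesis; the point, elaborated below, is that the relation for vectors in $W$ actually holds against an arbitrary homogeneous functional, not merely against $w'\in W^*$.

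For the inductive step I would write $u=c(m)u_0$ with $u_0$ a shorter monomial and, using the commutator formula packaged in \eqref{LieCommRlns} (equivalently weak commutativity, which holds throughout $M$), move the mode $c(m)$ to the far left of both sides of \eqref{prop6.9}. On the left-hand side, commuting $c(m)$ past $Y(a,z_0+z_2)Y(b,z_2)$ produces a ``transferred'' term $\langle w',c(m)\,(z_0+z_2)^{k}Y(a,z_0+z_2)Y(b,z_2)u_0\rangle=\langle w'\circ c(m),(z_0+z_2)^{k}Y(a,z_0+z_2)Y(b,z_2)u_0\rangle$ together with corrections involving $Y(c(j)a,z_0+z_2)Y(b,z_2)u_0$ and $Y(a,z_0+z_2)Y(c(i)b,z_2)u_0$. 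On the right-hand side, commuting $c(m)$ past the iterate $Y(Y(a,z_0)b,z_2)$ produces the matching transferred term $\langle w'\circ c(m),(z_2+z_0)^{k}Y(Y(a,z_0)b,z_2)u_0\rangle$ together with a correction $Y(c(i)(Y(a,z_0)b),z_2)u_0$. The two transferred terms are exactly the two sides of \eqref{prop6.9} for the shorter vector $u_0$ paired against the homogeneous functional $w'\circ c(m)$, so they cancel by the inductive hypothesis.

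It remains to cancel the corrections, and here the key algebraic identity is the derivation-type rule
\[ c(i)\big(Y(a,z_0)b\big)=Y\big(a,z_0\big)\big(c(i)b\big)+\sum_{l\ge 0}\binom{i}{l}z_0^{\,i-l}\,Y\big((c(l)a),z_0\big)b, \]
which splits the right-hand correction $Y(c(i)(Y(a,z_0)b),z_2)u_0$ into the iterates attached to the modified pairs $(a,c(i)b)$ and $(c(l)a,b)$. Matching these against the left-hand corrections shows that, modulo binomial coefficients and powers of $z_0,z_2$, the total correction is a combination of the differences in \eqref{prop6.9} for the pairs $(c(j)a,b)$ and $(a,c(i)b)$ acting on the shorter vector $u_0$. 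Each of these vanishes by the inductive hypothesis, completing the step; specializing the strengthened statement to $w'\in W^*$ then gives the proposition.

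The main obstacle is twofold. First, the formal-variable bookkeeping: one must fix $k$ large enough that every expression is a genuine formal series (no infinitely many negative powers), expand all commutators consistently in $z_1=z_0+z_2$, and verify that the left- and right-hand corrections really assemble into lower-length instances of \eqref{prop6.9} with matching coefficients. Second, and more conceptually, the induction cannot be run with $w'$ confined to $W^*$, because $w'\circ c(m)$ leaves $W^*$ as soon as $c(m)$ is not a zero mode; one must therefore carry arbitrary homogeneous functionals through the argument and check that the base case---the relation for vectors in $W$---holds against all of them. This is precisely where the earlier lemmas enter: their proofs establish vanishing of the relevant \emph{vectors} (e.g.\ $a(\wt{a}-1)b(\wt{b}-1)u=0$ and $o(a\circ b)u=0$ for $u\in W$), so those identities are genuinely vector-level and survive pairing against any functional.
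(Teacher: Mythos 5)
Your inductive scheme --- moving a mode $c(m)$ across the products via the commutator formula and the derivation identity, and absorbing the corrections into shorter instances of \eqref{prop6.9} --- is the right family of techniques (and matches the style of the argument in \cite{DLM2}, to which the paper simply defers), and you correctly diagnosed the central obstruction: the transferred term pairs $u_0$ against $w'\circ c(m)$, which leaves $W^*$. But your proposed repair is exactly where the proof breaks. The claim that the relation for $w\in W$ holds against \emph{arbitrary} homogeneous functionals is false: since homogeneous functionals separate points of $M(U)$, it would say that every coefficient of the defect expression \eqref{defW} vanishes as a \emph{vector} of $M(U)$, i.e.\ that the freely induced module $M(U)\cong S((V_{Lie})^+)\otimes U$ already satisfies weak associativity. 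That fails in general: weak associativity is precisely the nonlinear relation identifying the elements $(a(n)b)_m$, which are tied to products of modes of $a$ and $b$ only through the commutator relations \eqref{LieCommRlns}, with normally ordered infinite sums of such products, and no such relation holds in the induced module. If your strengthened base case were true, then $\mathscr{U}(V_{Lie})W=0$, the quotient $\bar{M}(U)$ and the maximal submodule $J$ would be superfluous, and the subsequent proposition inserting $x\in\mathscr{U}(V_{Lie})$ would be an immediate triviality --- none of which is the case. Your justification also misreads the earlier lemmas: their $i=0$ cases are \emph{not} vector-level. The collapse of $\left<u', Y(a,z_0+z_2)Y(b,z_2)u\right>$ to the single diagonal sum over $a(\wt{a}-1+n)b(\wt{b}-1-n)$, and likewise the step ``$a(j)b$ has no zero mode, hence $\left<u',(a(j)b)(k)u\right>=0$ for all $k$'' used for \eqref{casei=j=0pt2}, depend essentially on $u'$ being supported on the degree-zero piece $U$; against a functional supported in another graded piece those discarded terms survive. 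Only the terminal scalar facts such as $a(\wt{a}-1)b(\wt{b}-1)u=0$ and $o(a\circ b)u=0$ are vector-level, and they fall far short of the strengthened base case.

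The correct strengthening, which is how \cite{DLM2} organizes the matter (Propositions 6.9 and 6.10 there), is to admit only functionals of the form $w'\circ x$ with $w'\in W^*$ and $x\in\mathscr{U}(V_{Lie})$, and to run a joint induction on the length of $x$ and of the monomial $u$: moving $c(m)$ off $u$ lengthens $x$ (that is the content of the paper's next proposition), while moving a mode of $x$ rightward past $Y(a,\cdot)Y(b,\cdot)$ deposits it on the vector side, which is where the present proposition, with $u$ ranging over all of $M$, is consumed. The common kernel of the functionals $w'\circ x$ is exactly $J$ of \eqref{charJ}, not $0$, so the conclusion one can and does reach is that the defect lies in $J$ --- i.e.\ vanishes in $\Lambda(U)$ --- not that it vanishes in $M(U)$. (Your formal-variable bookkeeping concerns, by contrast, are benign: since $k\geq 0$ may always be enlarged by multiplying by powers of $(z_0+z_2)=(z_2+z_0)$, a common $k$ for the finitely many corrected pairs $(c(j)a,b)$, $(a,c(i)b)$ exists.) As written, however, your induction rests on a false base case and does not go through.
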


\begin{proof} See \cite{DLM2} Proposition 6.9.\end{proof}
\bigskip
\begin{prop} Let $M$ be as in the previous proposition.  Then for any $x\in\mathscr{U}(V_{Lie})$, $a\in V^r$, $u\in M$, there is an integer $k$ such that 
\begin{eqnarray}
&&\left< w', (z_0+z_2)^kx\cdot Y_{M(U)}(a, z_0+z_2)Y(b, z_2)w\right>\notag\\
&&\quad\quad=\left< w', (z_2+z_0)^kx\cdot Y_{M(U)}(Y(a, z_0)b, z_2)w\right>.\label{prop6.10}
\end{eqnarray}
for any $b\in V$ and any $w'\in W^*$.\end{prop}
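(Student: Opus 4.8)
The plan is to argue by induction on the degree $d$ of $x$ in the PBW filtration $\{\mathscr{U}_{\leq d}\}$ of $\mathscr{U}(V_{Lie})$. The base case $d=0$, i.e.\ $x=1$, is precisely the preceding proposition, which gives \eqref{prop6.9} for an arbitrary vector of $M$. The integer $k$ produced there may be increased at will, since multiplying both sides by the polynomial $z_0+z_2=z_2+z_0$ preserves the identity; I will therefore assume a single common large $k$ for the finitely many relations combined at each stage, without further comment.

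Assume now the statement for all elements of $\mathscr{U}_{\leq d-1}$, and write $x=c_m x'$ with $c\in V$ homogeneous, $c_m$ a PBW generator of $V_{Lie}$, and $x'\in\mathscr{U}_{\leq d-1}$. First I would split
\[
c_m x'=x'c_m+[c_m,x'].
\]
Because $[\mathscr{U}_{\leq 1},\mathscr{U}_{\leq d-1}]\subseteq\mathscr{U}_{\leq d-1}$, the element $[c_m,x']$ again lies in $\mathscr{U}_{\leq d-1}$, so its contribution to both sides of \eqref{prop6.10} is governed directly by the inductive hypothesis. It then remains to treat the term $x'c_m$, where the generator now sits immediately to the left of the vertex operators and may be commuted through them.

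For this I would use the operator form of \eqref{LieCommRlns},
\[
[c(m),Y(d,z)]=\sum_{p\geq 0}\binom{m}{p}Y(c(p)d,z)\,z^{\,m-p}.
\]
Applied to the product $Y_{M(U)}(a,z_0+z_2)Y(b,z_2)u$ it produces, besides the term $Y_{M(U)}(a,z_0+z_2)Y(b,z_2)\bigl(c(m)u\bigr)$, two families of commutator terms carrying $c(p)a$ in the first slot (with a factor $(z_0+z_2)^{m-p}$) and $c(l)b$ in the second slot (with a factor $z_2^{\,m-l}$). Applied instead to the iterate $Y_{M(U)}(Y(a,z_0)b,z_2)u$, together with the expansion
\[
c(l)\bigl(Y(a,z_0)b\bigr)=\sum_{p\geq 0}\binom{l}{p}z_0^{\,l-p}Y(c(p)a,z_0)b+Y(a,z_0)\bigl(c(l)b\bigr),
\]
it produces the corresponding terms on the iterate side. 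The combinatorial identity
\[
\sum_{l\geq 0}\binom{m}{l}\binom{l}{p}z_2^{\,m-l}z_0^{\,l-p}=\binom{m}{p}(z_0+z_2)^{m-p}
\]
then matches the two sides term by term: the $c(p)a$-, $c(l)b$-, and $c(m)u$-contributions on the product side are carried to the iterate side by \eqref{prop6.10} for $x'$, with $a$, $b$, or $u$ replaced by $c(p)a$, $c(l)b$, or $c(m)u$. Summing these instances of the inductive hypothesis, and adding the $[c_m,x']$ contribution above, yields \eqref{prop6.10} for $x$.

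I expect the main obstacle to be the formal-variable bookkeeping in this matching. Applying the inductive hypothesis to the $c(p)a$-term at exponent $k+m-p$ converts $(z_0+z_2)^{k+m-p}$ on the product side into $(z_2+z_0)^{k+m-p}$ on the iterate side, and one must know that $(z_2+z_0)^{m-p}$ agrees with the factor $(z_0+z_2)^{m-p}$ already present there. For $m-p\geq 0$ this is a polynomial identity, but for $m<0$ the two are genuinely different expansions, so one must check that, after extracting any fixed monomial $z_0^iz_2^j$, the relevant sums over the graded module $M$ are finite and the two conventions agree. Choosing $k$ large enough to dominate the finitely many exponents furnished by the inductive hypothesis for each such coefficient then completes the argument. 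This is the exact analogue of \cite{DLM2} Proposition 6.10, and the verification proceeds along the same lines.
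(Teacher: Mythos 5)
Your induction on the PBW filtration of $\mathscr{U}(V_{Lie})$, commuting the generator $c_m$ through the two vertex operators via \eqref{LieCommRlns} and applying the inductive hypothesis to the resulting $c(p)a$-, $c(l)b$-, and $c(m)u$-terms, is exactly the argument of \cite{DLM2} Proposition 6.10, which is all the paper's own proof cites. Your closing remark correctly isolates the one delicate point (the $(z_0+z_2)^{m-p}$ versus $(z_2+z_0)^{m-p}$ expansion conventions when $m-p<0$, which your displayed combinatorial identity glosses over) and resolves it the standard way, by truncation ($c(p)a=0$ for $p\gg 0$, so only finitely many exponents occur) plus a sufficiently large common $k$; the proposal is correct and essentially the same as the cited proof.
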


\begin{proof} See \cite{DLM2} Proposition 6.10.\end{proof}

\quad We now return to the proof of Theorem \ref{thm2}.  We let $M=M(U)$ and $W=U$ in the previous proposition.  In particular, from \eqref{charJ} together with the previous three propositions, and from the definition \eqref{defW} of $W$, we see that $\mathscr{U}(V_{Lie})(W)\subseteq J$.  Thus, $\Lambda(U)$ is an admissible module for $V$, and it satisfies the property that its vacuum space is equal to $U$, i.e., $\Omega(\Lambda(U)) = U$. (Indeed, if $v$ were a lowest weight vector in $M(U)$ but $v$ was not in $U$, then $v$ would generate a submodule $N$ which would satisfy $N\cap U =0$, hence $N\subseteq J$.)  This completes the proof of Theorem \ref{thm2}.

\quad Note that we may now complete the proof of Theorem \ref{thm1} by declaring that the lowest weight space of $\bar{M}(U)$ is equal to $U$.  This follows from the fact that $\mathscr{U}(V_{Lie})(W)\subseteq J$.
\bigskip
\begin{thm} The functors $\Lambda$ and $\Omega$ induce mutually inverse bijections on the isomorphism classes of the categories of simple $A(V)$-modules and simple admissible $V$-modules.\end{thm}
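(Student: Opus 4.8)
The plan is to establish four facts: that $\Omega$ carries simple admissible $V$-modules to simple $A(V)$-modules, that $\Lambda$ carries simple $A(V)$-modules to simple admissible $V$-modules, and that the two composites $\Omega\circ\Lambda$ and $\Lambda\circ\Omega$ restrict to the respective identities on isomorphism classes. The first fact is already in hand, being exactly Proposition \ref{prop3.12}. The relation $\Omega(\Lambda(U))=U$ for an $A(V)$-module $U$ is the content of Theorem \ref{thm2}, so once we know $\Lambda$ preserves simplicity the composite $\Omega\circ\Lambda$ is automatically the identity on (isomorphism classes of) simple $A(V)$-modules. Thus the two substantive tasks are to prove that $\Lambda(U)$ is simple whenever $U$ is, and to prove that $\Lambda(\Omega(M))\cong M$ for simple $M$.

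For simplicity of $\Lambda(U)$, recall that $\Lambda(U)=M(U)/J$, where $J$ is the maximal graded submodule of $M(U)$ meeting $U$ trivially, and that $U$ embeds as the lowest weight space of $\Lambda(U)$. First I would show that any nonzero graded submodule $\bar{N}\subseteq\Lambda(U)$ satisfies $\bar{N}\cap U\neq 0$; otherwise its preimage in $M(U)$ would be a graded submodule properly containing $J$ and still meeting $U$ trivially, contradicting the maximality of $J$. Next, since $\bar{N}$ is stable under all modes, in particular under the zero modes $o(a)$, which preserve the lowest weight space, the intersection $\bar{N}\cap U$ is a nonzero $A(V)$-submodule of $U$; simplicity of $U$ then forces $\bar{N}\cap U=U$. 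Finally, because $M(U)$, and hence $\Lambda(U)$, is generated by $U$ under the action of $V_{Lie}$, containing $U$ forces $\bar{N}=\Lambda(U)$. Hence $\Lambda(U)$ is simple.

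For the reverse composite, let $M$ be a simple admissible $V$-module, so that $M$ is generated by its lowest weight space $M(0)=\Omega(M)$, and on $M(0)$ the subalgebra $(V_{Lie})^0$ acts through the zero modes $o(a)$ while $(V_{Lie})^-$ annihilates. By the universal property of the induced module $M(M(0))$ this data yields a surjective map $\pi\colon M(M(0))\twoheadrightarrow M$ restricting to the identity on $M(0)$. Its kernel is a graded submodule meeting $M(0)$ trivially, hence contained in $J$ by maximality, so $\pi$ descends to a surjection $\bar{\pi}\colon\Lambda(M(0))\twoheadrightarrow M$. As $\Lambda(M(0))$ is simple by the previous paragraph and $\bar{\pi}$ is nonzero, $\bar{\pi}$ is an isomorphism, giving $\Lambda(\Omega(M))\cong M$. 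Combined with $\Omega(\Lambda(U))=U$ from Theorem \ref{thm2}, this exhibits $\Lambda$ and $\Omega$ as mutually inverse bijections on isomorphism classes.

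I expect the main obstacle to be bookkeeping rather than conceptual, since Theorem \ref{thm2} already supplies the hard analytic input through the inclusion $\mathscr{U}(V_{Lie})W\subseteq J$. The points genuinely requiring care are verifying that the $(V_{Lie})^{\leq 0}$-action inherited from the inclusion $M(0)\subseteq M$ coincides with the action used to build $M(M(0))$ — here one must check that every $a_n\in(V_{Lie})^-$ annihilates $M(0)$, which follows because an admissible module has no graded piece $M(\mu)$ with $Re(\mu)=0$ and $\mu\neq 0$, so the universal property genuinely applies — and confirming that $\bar{N}\cap U$ is closed under $A(V)$, which relies on the earlier observation that only elements of $V^0$ possess zero modes acting on the graded pieces. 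No estimates beyond those already established are needed.
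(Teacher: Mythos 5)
Your argument is correct in substance and follows the same skeleton as the paper's proof: Proposition \ref{prop3.12} for one composite's ingredient, Theorem \ref{thm2} for $\Omega(\Lambda(U))=U$, simplicity of $\Lambda(U)$, and uniqueness of the simple quotient of the universal object. The difference is one of explicitness: the paper cites \cite{DLM2}, Lemma 7.1 for the simplicity of $\Lambda(U)$ and compresses the final step into the remark that $M$ and $\Lambda(\Omega(M))$ are both simple quotients of the universal object $\bar{M}(\Omega(M))$ with its unique maximal graded submodule $J$ meeting $\Omega(M)$ trivially, whereas you prove simplicity in-house (preimage argument against the maximality of $J$, then $A(V)$-stability of $\bar{N}\cap U$ via zero modes, then generation by $U$) and realize the comparison map concretely through the universal property of the induced module $M(M(0))$. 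Your extra care in checking that $(V_{Lie})^-$ annihilates $M(0)$ because an admissible module has no graded piece $M(\mu)$ with $Re(\mu)=0$ and $\mu\neq 0$ is exactly what makes the universal property legitimately applicable, and it is a point the paper leaves implicit; what the paper's terser route buys is brevity, at the cost of outsourcing the simplicity of $\Lambda(U)$ to \cite{DLM2}.

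One step is stated with the logic reversed. From $\ker\pi\subseteq J$ you cannot conclude that $\pi$ descends to a map $\Lambda(M(0))=M(M(0))/J\to M$: descent in that direction would require $J\subseteq\ker\pi$. What your inclusion actually yields is a surjection the other way, $M\cong M(M(0))/\ker\pi\twoheadrightarrow M(M(0))/J=\Lambda(M(0))$, which already finishes the proof since $M$ is simple and $\Lambda(M(0))\neq 0$ (you do not even need simplicity of $\Lambda(M(0))$ at this point). Alternatively: because $M$ is simple, $\ker\pi$ is a maximal submodule, so $\ker\pi\subseteq J\subsetneq M(M(0))$ forces $J=\ker\pi$ and hence $\Lambda(M(0))\cong M$ directly. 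Either repair is one line, and the ingredients you assembled --- gradedness of $\ker\pi$, the identity $\ker\pi\cap M(0)=0$, and the characterization of $J$ as the largest graded submodule meeting $U$ trivially --- are all correct and sufficient.
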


\begin{proof} First, Theorem \ref{thm2} implies that $\Omega(\Lambda(U)) = U$ for any $A(V)$-module $U$.  Now assume that $M$ is a simple $V$-module.  Then we know that $\Omega(M)$ is a simple $A(V)$-module (Proposition \ref{prop3.12}).  Moreover, we know that $\Lambda(\Omega(M))$ is a simple $V$-module (See \cite{DLM2} Lemma 7.1).  Then $M$ and $\Lambda(\Omega(M))$ are both simple quotients of the universal object $\bar{M}(\Omega(M))$, and are therefore isomorphic, since $\bar{M}(\Omega(M))$ has a unique maximal ideal $J$ subject to $J\cap\Omega(M) = 0$.\end{proof}

\section{Pseudo VOAs}

The goal of this section is to construct a family of $\bb{C}$-graded vertex algebras from a given VOA.  Since this class of examples come from a VOA, there is some additional conformal structure which is not present in an arbitrary $\bb{C}$-graded vertex algebra.  Thus we have the following definition: 
\bigskip
\begin{definition*} A \emph{pseudo vertex operator algebra}, or PVOA, is a $\bb{C}$-graded vertex algebra $V= \bigoplus_{\mu\in\bb{C}}V_{\mu}$ with the following additional properties:

\quad (i) There is a vector $\omega\in V_2$ called the \emph{conformal vector} such that the operators $\{L(n)\}$ defined by $Y(\omega ,z) = \sum_{n\in\bb{Z}}L(n)\;z^{-n-2}$ generate the Virasoro algebra,

\quad (ii) $L(-1) = T$,

\quad (iii) For any $v\in V_{\mu}$, there is some $n\in \bb{N}$ such that $(L(0)-\mu)^nv=0$, and $dim(V_{\mu})<\infty$ for all $\mu\in\bb{C}$,

\quad (iv) $Re(\mu) \geq |Im(\mu)|$ for all but finitely many $\mu\in Spec_V(L(0))$.

This completes the definition.
\end{definition*}

\quad One notes that any VOA is a PVOA, and any PVOA contains the VOA generated by $\omega$.  In fact, a PVOA satisfies the axioms of a VOA with the exception of the semisimplicity of $L(0)$ and the integrality of the spectrum of $L(0)$.  The conformal structure of a PVOA allows us to define a new class of $V$-module.
\bigskip
\begin{definition*} A weak $V$-module $M$ is called \emph{ordinary} if $M$ is has a $\bb{C}$-grading induced by $L(0)$-eigenvalues\[M=\bigoplus_{\mu\in Spec_M(L(0))}M(\mu),\]such that each graded subspace $M(\mu)$ is finite dimensional.  We also require that $Re(\mu) >0$ for all but finitely many $\mu\in Spec_M(L(0))$. \end{definition*}

\quad One notes that an ordinary $V$-module is an admissible module for $V$ as a $\bb{C}$-graded vertex algebra.

\quad Throughout this section, we let \[V = \bb{C1}\oplus\bigoplus_{n\in\bb{N}}V_n\] be a CFT-type VOA of central charge $c$.  The next proposition shows how one may ``deform" the conformal structure on a VOA.

\bigskip
\begin{prop} Let $h\in V_1$ with $h(1)h  = \alpha\bb{1}$ and $L(1)h = \beta\bb{1}$.  Then the modes of \[\omega_h = \omega +L(-1)h\] satisfy the Virasoro relations with central charge $c_h =c+12(\beta-\alpha)$.
\end{prop}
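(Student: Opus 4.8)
The plan is to verify the Virasoro relations for the modes of $\omega_h = \omega + L(-1)h = \omega + Th$ by a direct computation with the commutator formula \eqref{LieCommRlns}, or equivalently with the standard formula for the modes of a field built from $\omega$ and a weight-one primary-like vector $h$. First I would introduce the new modes $L_h(n)$ via $Y(\omega_h,z)=\sum_n L_h(n)z^{-n-2}$. Since $Y(Th,z)=\partial_z Y(h,z)$, and writing $Y(h,z)=\sum_n h(n)z^{-n-1}$, one reads off the modes of $\omega_h$: if $Y(\omega,z)=\sum_n L(n)z^{-n-2}$ gives the modes $L(n)=\omega(n+1)$, then differentiation shifts the weight-one field so that $L_h(n) = L(n) - (n+1)\,h(n)$. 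Getting this mode expansion right is the bookkeeping backbone of the whole argument.

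**Reducing to known brackets.** With $L_h(n) = L(n) - (n+1)h(n)$ in hand, the Virasoro bracket $[L_h(m),L_h(n)]$ expands into four pieces:
\[
[L(m),L(n)] - (n+1)[L(m),h(n)] - (m+1)[h(m),L(n)] + (m+1)(n+1)[h(m),h(n)].
\]
Each bracket is standard. The first is the Virasoro relation for $\omega$ itself. The mixed brackets $[L(m),h(n)]$ are governed by the fact that $h\in V_1$, so $L(0)h=h$, $L(n)h=0$ for $n\geq 2$, $L(1)h=\beta\bb{1}$, giving $[L(m),h(n)] = -n\,h(m+n) + \binom{m+1}{2}\beta\,\delta_{m+n,0}\cdot(\text{vacuum mode})$ — here I must track the $L(1)h=\beta\bb{1}$ contribution carefully, since $\bb{1}$ has only its constant mode. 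The last bracket uses $h(1)h=\alpha\bb{1}$, $h(0)h = Th$'s contribution (which vanishes under the symmetric combination), yielding $[h(m),h(n)] = m\,\alpha\,\delta_{m+n,0}$.

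**Assembling the central term.** The hard part will be collecting all the $\delta_{m+n,0}$ terms and checking they combine into the single central charge $c_h = c + 12(\beta-\alpha)$ with the correct $\tfrac{1}{12}(m^3-m)$ coefficient, while simultaneously confirming that the non-central terms reorganize exactly into $(m-n)L_h(m+n)$. I expect the $(m-n)L_h(m+n)$ part to fall out cleanly once the $-(n+1)h$ corrections are matched against the $-n\,h(m+n)$ terms from the mixed brackets; the genuine obstacle is the arithmetic of the central coefficients, where the $c/12$ term from $\omega$, the $\beta$-contributions from the two mixed brackets (each weighted by a binomial in $m$), and the $\alpha$-contribution from $[h(m),h(n)]$ must sum to $\tfrac{1}{12}(c+12\beta-12\alpha)(m^3-m)$. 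I would verify this by specializing to the defining relation at $n=-m$ and matching the coefficient of $m^3-m$, which pins down the $\pm 12$ factors in front of $\beta$ and $\alpha$. Throughout, the only external inputs are the hypotheses $h(1)h=\alpha\bb{1}$, $L(1)h=\beta\bb{1}$, the CFT-type grading, and the translation property $T=L(-1)$, all available by assumption.
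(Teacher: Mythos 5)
Your proposal is correct and follows exactly the route the paper takes: read off $L_h(n)=L(n)-(n+1)h(n)$ from $Y(\omega+L(-1)h,z)$ and then carry out the ``standard calculation'' of $[L_h(m),L_h(n)]$, which the paper leaves implicit; your bracket values $[L(m),h(n)]=-n\,h(m+n)+\binom{m+1}{2}\beta\,\delta_{m+n,0}$ and $[h(m),h(n)]=m\alpha\,\delta_{m+n,0}$ assemble to $(m-n)L_h(m+n)+\frac{m^3-m}{12}(c+12\beta-12\alpha)\delta_{m+n,0}$ as claimed. (One small clean-up: the symmetric term $(h(0)h)(m+n)$ vanishes outright, since skew-symmetry gives $h(0)h=-h(0)h+T(h(1)h)=-h(0)h$, not because $h(0)h$ equals $Th$.)
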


\begin{proof} We let $Y(\omega_h, z) = \sum_{n\in\bb{Z}} L_h(n)z^{-n-2}$, so that 
\begin{equation}
L_h(n) = L(n) - (n+1)h(n).\label{shiftedmodes}
\end{equation}
A standard calculation shows that 
\begin{equation*}
[L_h(m), L_h(n)] = (m-n)L_h(m+n) +\frac{m^3 - m}{12}\delta_{m+n,0}(c+ 12(\beta-\alpha))Id. 
\end{equation*}
\end{proof}

\quad In what follows, we will concern ourselves primarily with the case where $h$ is a \emph{primary} vector, i.e., $L(1) h =0$, so that $\beta = 0$ in the previous proposition.  Note that by \eqref{shiftedmodes} we have $L_h(-1) = L(-1)$ and $L_h(0) = L(0) - h(0)$.  We denote by $V^h$ the quadruple $(V, Y, \bb{1}, \omega_h)$.  In general, it is not clear whether $V^h$ is a PVOA, since the spectrum of $L_h(0)$ is not well understood.

\subsection{Strongly Regular VOAs and Jacobi Forms}

\begin{definition*} A VOA is called \emph{strongly regular} if $V$ is rational, $C_2$-cofinite, CFT-type, and $L(1)V_1=0$.
\end{definition*}

The following two theorems are proved in \cite{M}:
\bigskip

\begin{thm} Let $V$ be a strongly regular VOA.  Then the weight one subspace $V_1$ is a reductive Lie algebra.
\end{thm}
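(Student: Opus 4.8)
The plan is to exhibit a nondegenerate invariant bilinear form on $V_1$ and combine it with a Lie-algebraic structure theorem. First I would recall that a strongly regular VOA is in particular CFT-type with $L(1)V_1 = 0$, so the weight one space $V_1$ carries a Lie bracket given by $[a,b] = a(0)b$, making it a finite-dimensional Lie algebra (finiteness follows from $C_2$-cofiniteness, which forces each $V_n$ to be finite dimensional). The key construction is the invariant form: on a VOA of CFT-type one has the canonical bilinear form defined via $\langle a, b\rangle \mathbf{1} = a(2|a|-1)b$ on homogeneous vectors (the restriction to $V_1$ being $\langle a,b\rangle\mathbf{1} = a(1)b$), and I would verify it is symmetric and invariant in the vertex-algebra sense, i.e. $\langle a(n)u, v\rangle = \langle u, a^*(n)v\rangle$ where the adjoint is controlled by $e^{zL(1)}$.

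The crucial input is a theorem of Li: for a VOA of CFT-type with $L(1)V_1 = 0$, the canonical invariant bilinear form is \emph{nondegenerate}. This is exactly where rationality and $C_2$-cofiniteness (through strong regularity) enter, guaranteeing $V_0 = \mathbb{C}\mathbf{1}$ and that $V$ is self-dual so that the form has no radical. Granting nondegeneracy of $\langle\,,\,\rangle$ on $V_1$, I would then check that this form is invariant for the Lie bracket: the identity $\langle [a,b], c\rangle = -\langle b, [a,c]\rangle$ should follow from the invariance of the VOA form together with the primary condition $L(1)V_1 = 0$, which kills the correction terms that would otherwise appear when commuting modes.

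With an invariant nondegenerate symmetric bilinear form in hand, the conclusion is the structure theorem: a finite-dimensional Lie algebra admitting a nondegenerate invariant symmetric bilinear form is reductive. I would invoke this standard fact, whose proof observes that the radical of the Killing-type data and the orthogonal complement of any ideal behave well under a nondegenerate invariant form, forcing the solvable radical to be central and hence $V_1 = Z \oplus [V_1, V_1]$ with $[V_1,V_1]$ semisimple. The main obstacle, and the real content of the theorem, is establishing nondegeneracy of the form on $V_1$; this is not a formal consequence of the axioms but genuinely relies on the regularity hypotheses, and it is the step I would either cite in full from \cite{M} or from Li's work on bilinear forms rather than reprove here.
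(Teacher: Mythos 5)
The paper itself gives no argument here --- it simply cites \cite{M} --- so the comparison is with the proof there (which rests on earlier work of Dong--Mason). Measured against that, your proposal has a fatal gap in its final step: it is \emph{not} true that a finite-dimensional Lie algebra admitting a nondegenerate invariant symmetric bilinear form is reductive. Such ``quadratic'' (or metric) Lie algebras include non-reductive, even solvable, examples: the four-dimensional oscillator (diamond) algebra carries a nondegenerate invariant symmetric form, and the double-extension construction of Medina--Revoy produces such forms on a large class of non-reductive algebras. In these examples the solvable radical is not central, so nondegeneracy of $\left<\,,\,\right>$ on $V_1$ alone cannot force $V_1 = Z \oplus [V_1,V_1]$ as you claim. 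The true statement in this vicinity is a Cartan-criterion fact: if the form is the \emph{trace form of a faithful finite-dimensional representation}, then the algebra is reductive; but the canonical VOA form $\left<a,b\right>\mathbf{1} = a(1)b$ on $V_1$ is not a priori the trace form of any finite-dimensional $V_1$-module, so that route is not available without substantial extra input.

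That extra input is exactly where rationality and $C_2$-cofiniteness really enter, and your proposal misplaces it. Nondegeneracy of the form is the comparatively soft part: by Li's theorem it follows from $V_0 = \mathbb{C}\mathbf{1}$, $L(1)V_1 = 0$, and simplicity of $V$ (the radical of the form is an ideal) --- note, incidentally, that simplicity is not among this paper's listed axioms for strong regularity, so your appeal to rationality for nondegeneracy is doubly off. The hard step, entirely absent from your sketch, is Zhu's modular invariance of graded trace functions, which is what rationality and $C_2$-cofiniteness buy. Schematically: for $h$ in the nilpotent radical of $V_1$, $h(0)$ acts nilpotently on each finite-dimensional $V_n$, so $\mathrm{tr}_{V_n}\,o(h)^2 = 0$ for all $n$; Zhu's recursion expresses the trace function attached to $h[-1]h\mathbf{1}$ as this vanishing trace plus a term proportional to $\left<h,h\right>$ times the quasimodular Eisenstein series $E_2(\tau)$, and modular invariance then forces $\left<h,h\right> = 0$. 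Combined with nondegeneracy and invariance of the form, this makes the radical central, which is equivalent to $V_1$ being reductive. So your outline would need to be repaired by replacing the false structure theorem with this modular-invariance argument (or by citing \cite{M} for the whole theorem, not just for nondegeneracy).
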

\quad This theorem allows us to work with a Cartan subalgebra of $V_1$.  Indeed, a reductive Lie algebra $\mf{g}$ can be written as
\begin{equation*}
\mf{g} = \mf{g}_{ss} \oplus \mf{a}
\end{equation*}
where $\mf{g}_{ss}$ is a semisimple Lie algebra and $\mf{a}$ is an abelian ideal of $\mf{g}$.  In this case, if $\mf{h}$ is a Cartan subalgebra of $\mf{g}_{ss}$, then $\mf{h}\oplus\mf{a}$ is a Cartan subalgebra of $\mf{g}$.

\bigskip

\begin{thm}\label{proph} Let $V$ be a strongly regular VOA.  Then any Cartan subalgebra $H\subseteq V_1$ has a basis $\{h_1, ...\; ,h_r\}$ such that each $h_i$ satisfies the following two properties:

\quad\quad\quad\quad (i) $h_i(0)$ is a semisimple operator with integral eigenvalues

\quad\quad\quad\quad (ii) $\left<h_i, h_i\right> \in2\bb{Z}$

\end{thm}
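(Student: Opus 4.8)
The plan is to produce the basis in three stages: establish semisimplicity of the zero modes $h(0)$, identify the $H$-weights of $V$ with a lattice inside $H$, and then invoke the Jacobi-form machinery of this subsection to show that this lattice is even and integral, at which point a $\bb{Z}$-basis of it is exactly the basis we want.

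First I would record the Lie-theoretic input. Because $H$ is a Cartan subalgebra of the reductive Lie algebra $V_1$ (with bracket $[a,b]=a(0)b$), for any $h,k\in H$ we have $h(0)k=[h,k]=0$; hence the operators $\{h(0):h\in H\}$ commute with one another and, since $h\in V_1$, each commutes with $L(0)$. Strong regularity then forces these operators to be semisimple: the vertex subalgebra generated by $H$ is a rank-$r$ Heisenberg VOA, and rationality together with $C_2$-cofiniteness make $V$ a completely reducible module over it, so $V$ decomposes into Fock spaces on each of which every $h(0)$ acts as a scalar ``charge''. This yields a joint eigenspace decomposition $V=\bigoplus_{\lambda\in H^*}V^{\lambda}$, proving simultaneously that each $h(0)$ is semisimple and that its eigenvalues are the values $\lambda(h)$.

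Next I would analyze the weight set $P=\{\lambda\in H^*:V^{\lambda}\neq 0\}$. Since $u\in V^{\lambda}$ forces $u(n)V^{\mu}\subseteq V^{\lambda+\mu}$, and $\bb{1}\in V^{0}$, the set $P$ is closed under addition; and since $V$ is self-dual (a consequence of strong regularity, via $L(1)V_1=0$), $V^{-\lambda}$ is dual to $V^{\lambda}$, so $P=-P$ and $P$ is a subgroup of $H^*$. Using the invariant form $\langle\cdot,\cdot\rangle$ on $V_1$, whose restriction to $H$ is nondegenerate, I identify $H^{*}\cong H$ and transport $P$ to a subgroup $L\subseteq H$. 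Nondegeneracy of the form, together with the fact that no nonzero $h\in H$ can act as zero on all of $V$, shows that $L$ is a full-rank lattice in $H$, so any $\bb{Z}$-basis of $L$ is automatically a $\bb{C}$-basis of $H$. I note for later that, under this identification, the eigenvalues of $h(0)$ for $h\in H$ are exactly the pairings $\langle h,\beta\rangle$ with $\beta\in L$. It remains to show that $L$ is an \emph{even, integral} lattice, and this is where the Jacobi-form input of the subsection enters. The refined character $Z_V(\tau,z)=\mathrm{tr}_V\,q^{L(0)-c/24}\,\zeta^{h(0)}$, with $q=e^{2\pi i\tau}$ and $\zeta=e^{2\pi i z}$, is by Zhu-type modular invariance a weak Jacobi form of weight $0$ and index $\tfrac12\langle h,h\rangle$. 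Its elliptic transformation law under $z\mapsto z+a\tau+b$ forces the charges to fit a theta-decomposition over $L$, which pins down both the quantization of the eigenvalues of $h(0)$ and the parity of the norms: the index is a nonnegative integer, i.e.\ $\langle h,h\rangle\in 2\bb{Z}$ for $h\in L$, and the pairing is integral on $L$. Granting this, a $\bb{Z}$-basis $\{h_1,\dots,h_r\}$ of $L$ finishes the argument: each $h_i\in L$ gives $\langle h_i,h_i\rangle\in 2\bb{Z}$, which is (ii), and integrality of $L$ makes every eigenvalue $\langle h_i,\beta\rangle$ of $h_i(0)$ an integer, which together with the semisimplicity above is (i).

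The main obstacle is precisely this last step: proving that the charge lattice $L$ is even and integral. Everything there rests on showing that $Z_V(\tau,z)$ is a genuine Jacobi form and on correctly identifying its index with $\tfrac12\langle h,h\rangle$; establishing convergence and the full $SL_2(\bb{Z})\ltimes\bb{Z}^{2}$ transformation law requires the Zhu recursion together with $C_2$-cofiniteness, and extracting evenness and integrality from the elliptic transformation law is the delicate point. By contrast, the semisimplicity and lattice steps are comparatively formal once the reductive structure of $V_1$ and the complete reducibility of $V$ over the Heisenberg subalgebra are in hand.
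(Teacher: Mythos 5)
Note first that the paper does not prove this theorem at all: it is quoted verbatim from \cite{M} (``The following two theorems are proved in [M]''), so your proposal has to be measured against Mason's argument there. Against that standard, your first stage has a circularity. You derive semisimplicity of the operators $h(0)$, $h\in H$, from the assertion that ``rationality together with $C_2$-cofiniteness make $V$ a completely reducible module'' over the Heisenberg subVOA generated by $H$. But the Heisenberg VOA is not rational, and rationality of $V$ says nothing about restriction of $V$ to a subVOA; a (restricted) Heisenberg module decomposes into Fock spaces precisely when the zero modes already act semisimply, which is the very thing being proved. For $h$ in the semisimple part of $V_1$ one can argue via complete reducibility of each finite-dimensional $V_1$-module $V_n$, but the hard case is $h$ in the abelian radical of the reductive Lie algebra $V_1$, and in \cite{M} this is imported from the Dong--Mason integrability theorem for rational $C_2$-cofinite VOAs --- a substantive result, not a formal one. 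Likewise, the full-rank claim for the charge lattice (your assertion that ``no nonzero $h\in H$ can act as zero on all of $V$'') is itself a theorem obtained by modular-invariance arguments, so your closing remark that the semisimplicity and lattice steps are ``comparatively formal'' inverts where the real depth lies. (A smaller point: closure of $P$ under addition needs $Y(u,z)v\neq 0$ for nonzero $u,v$, i.e.\ simplicity, which is not part of this paper's definition of strongly regular.)

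The more serious problem is that your endgame aims at a false statement: the full charge lattice $L$ of a strongly regular VOA is rational but in general neither integral nor even. For the simple affine VOA $L_{\widehat{\mf{sl}}_2}(k,0)$ with coroot $h$, one has $\langle h,h\rangle = 2k$ and $h(0)e = 2e$ for the root vector $e\in V_1$, so the charge of $e$ is $\beta = h/k$ with $\langle\beta,\beta\rangle = 2/k$, which is not in $2\bb{Z}$ for any $k\geq 2$ (and not even integral for $k\geq 3$). So no elliptic transformation law can force evenness and integrality of all of $L$. Moreover, invoking the Krauel--Mason Jacobi-form theorem here is circular: as quoted in this very paper, its hypothesis is that $h(0)$ acts semisimply with \emph{integral} spectrum on every module $M_i$, so it cannot be the source of the integrality in (i). The theorem as stated only asks for \emph{some} basis with properties (i) and (ii), and that is how \cite{M} proceeds: once the charge lattice is known to be rational, positive definite, and of full rank, one rescales a basis $h_i\mapsto N_ih_i$ so that the $h_i(0)$-spectra become integral and the norms $N_i^2\langle h_i,h_i\rangle$ land in $2\bb{Z}$; equivalently, \cite{M} produces an even full-rank lattice $\Lambda\subseteq H$ with $V_\Lambda\subseteq V$, and integrality of $h_i(0)$ for $h_i\in\Lambda$ follows because every charge of $V$ lies in the dual lattice $\Lambda^{\circ}$. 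Your architecture (semisimplicity, charge lattice, modularity) is recognizably the right shape, but the two pivotal steps as you present them --- complete reducibility over the Heisenberg, and evenness/integrality of the whole charge lattice --- do not go through.
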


\quad Now we consider a strongly regular simple VOA $V$.  In this case, we denote by $\{M_1=V,.. .,M_k\}$ the set of inequivalent simple admissible $V$-modules.  For each $i\in\{1,.. .k\}$, we define
\[J^i(\tau,z) = Tr_{M_i}q^{L_i(0)-c/24}\zeta^{h(0)},\]
where $h$ is assumed to be an element in $V_1$ with the property that $h(0)$ acts semisimply on every $M_i$ with integral spectrum.  We note that $J^i(\tau, z)$ is, up to an overall shift, a power series in $q$:
\begin{equation}\label{jacpows}
J^i(\tau, z)= q^{s_i}\sum_{n=0}^{\infty}\sum_{r\in\bb{Z}}c^i(n,r)q^n\zeta^r.\end{equation}
where $s_i = -c/24 +\lambda_i$ and $\lambda_i$ is the conformal weight of $M_i$.  We have the following 
result (\cite{KM}):
\bigskip

\begin{thm} Let $V$ be a strongly regular simple VOA.  Let $h\in V_1$ be an element such that $h(0)$ is semisimple on every $M_i$ with integral spectrum.  Then the functions $J^i(\tau,z)$ are holomorphic in $\bb{H}\times \bb{C}$, and the following functional equations hold for all $\gamma = \begin{pmatrix}a&b\\c&d\end{pmatrix}\in SL(2,\bb{Z})$ and $(u,v)\in\bb{Z}^2$:

\quad (i) There are scalars $a_{ij}(\gamma)$ depending only on $\gamma$ such that 
\[J^i\left(\gamma\tau, \frac{z}{c\tau+d}\right) = e^{\pi i c z^2\left<h,h\right> /(c\tau +d)}\sum_{j=1}^ra_{ij}(\gamma)J^j(\tau,z),\]

\quad (ii) There is a permutation $i\mapsto i'$ of $\{1,.. .,k\}$ such that 
\begin{equation}\label{vvjac}J^i(\tau, z+u\tau+v) = e^{-\pi i\left< h,h\right>(u^2\tau+2uz)}J^{i'}(\tau,z).\end{equation}\end{thm}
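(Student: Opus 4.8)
The plan is to recognize the family $\{J^i\}$ as a vector-valued weak Jacobi form of weight $0$ and index $m=\langle h,h\rangle/2$, and to prove the two functional equations by separate mechanisms: the elliptic relation (ii) from a spectral flow of modules, and the modular relation (i) from Zhu's modular invariance combined with a theta decomposition. Using Theorem~\ref{proph} I would first reduce, after writing $h$ in the distinguished Cartan basis, to the case where $h(0)$ is semisimple with integral spectrum on every $M_i$ and $\langle h,h\rangle\in\bb{Z}$, so that each $J^i$ is a genuine $q$-series whose coefficient of each power of $q$ is a Laurent polynomial in $\zeta=e^{2\pi i z}$ and $2m=\langle h,h\rangle\in\bb{Z}$.

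For holomorphy in $\bb{H}\times\bb{C}$: since $V$ is rational and $C_2$-cofinite, each $M_i$ is an ordinary module with finite-dimensional homogeneous subspaces, so at each $q$-degree the $\zeta$-coefficient is a finite Laurent polynomial, hence entire in $z$. Joint holomorphy then reduces to the locally uniform convergence in $\tau$ of the ordinary graded dimension (Zhu's convergence theorem, \cite{Z}) together with the boundedness of the $h(0)$-spectrum on each weight space. This is preliminary and not the crux.

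For the elliptic transformation (ii) I would use Li's $\Delta$-operator construction. For primary $h\in V_1$ with $h(0)$ integral and semisimple, the operator $\Delta(h,z)=z^{h(0)}\exp\big(\sum_{n\geq 1}\tfrac{h(n)}{-n}(-z)^{-n}\big)$ turns any $V$-module $(M_i,Y_{M_i})$ into a new $V$-module $\widetilde M_i$ on the same space, carrying grading operators $\widetilde L(0)=L(0)+h(0)+\tfrac12\langle h,h\rangle$ and $\widetilde h(0)=h(0)+\langle h,h\rangle$. Because $V$ is rational, this spectral flow permutes the finite set of simple modules, defining the permutation $i\mapsto i'$ by $\widetilde M_i\cong M_{i'}$. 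Comparing $\mathrm{Tr}_{\widetilde M_i}\,q^{\widetilde L(0)-c/24}\zeta^{\widetilde h(0)}$ with $J^i$ yields $J^i(\tau,z+\tau)=e^{-\pi i\langle h,h\rangle(\tau+2z)}J^{i'}(\tau,z)$, which is (ii) for $(u,v)=(1,0)$; integrality of the $h(0)$-spectrum gives invariance under $z\mapsto z+v$ with $i'=i$, and these two cases generate all of (ii).

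The modular transformation (i) is the heart of the matter and the main obstacle. Granting (ii) and holomorphy, each $J^i$ admits a theta decomposition $J^i(\tau,z)=\sum_{\mu\bmod 2m}f^i_\mu(\tau)\,\theta_{m,\mu}(\tau,z)$ into classical index-$m$ Jacobi theta functions. I would identify the finitely many components $f^i_\mu$ with ordinary graded-trace functions of $V$ (traces over the fixed $h(0)$-charge subspaces, organized as characters for $V$ or for the Heisenberg commutant of $h$) and invoke Zhu's modular invariance theorem to conclude that the vector $(f^i_\mu)$ transforms as a vector-valued modular form under $SL(2,\bb{Z})$. The classical transformation of $\theta_{m,\mu}$ under $\gamma$, which supplies the automorphy factor $(c\tau+d)^{1/2}e^{\pi i\langle h,h\rangle c z^2/(c\tau+d)}$ and mixes the $\mu$ by a Weil representation, then combines with the modular matrix for $(f^i_\mu)$ to produce exactly the scalars $a_{ij}(\gamma)$ and the stated factor in (i). The genuine difficulty is here: one must justify that the theta-components are precisely the family covered by Zhu's theorem (closure under $S:\tau\mapsto-1/\tau$, and that the convergence underlying the decomposition is locally uniform so the rearrangement is legitimate), and control the finitely many exceptional $L(0)$-eigenvalues allowed by the spectral hypotheses. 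An alternative route avoiding the theta decomposition, which also covers non-integral index directly, is to extend Zhu's recursion formulas to the charged setting, replacing the Eisenstein and Weierstrass functions by their elliptic Jacobi analogues; the $SL(2,\bb{Z})$-covariance of the latter then feeds the automorphy factor, though the convergence bookkeeping is comparably delicate.
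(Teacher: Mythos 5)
The paper itself contains no proof of this theorem: it is imported verbatim from \cite{KM}, so the benchmark for your proposal is the argument in that reference. Your treatment of part (ii) matches it essentially exactly. Li's operator $\Delta(h,z)$ does give a spectral flow $M_i\mapsto \widetilde M_i$ with the grading shifts you state, $\widetilde L(0)=L(0)+h(0)+\tfrac12\langle h,h\rangle$ and $\widetilde h(0)=h(0)+\langle h,h\rangle$ (legitimate here because strong regularity forces $L(1)V_1=0$, so $h$ is primary and $h(1)h=\langle h,h\rangle\bb{1}$); invertibility of $\Delta$ plus rationality makes $i\mapsto i'$ a permutation of the finitely many simples; and your trace comparison correctly reproduces \eqref{vvjac} for $(u,v)=(1,0)$, with integrality of the $h(0)$-spectrum disposing of $v$-translations. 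That half of your proposal is sound and is the mechanism actually used in the cited source.

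The genuine gap is in part (i), and it sits exactly where you placed it — but your primary mechanism fails as stated rather than merely being delicate. The theta components $f^i_\mu(\tau)$ are partial traces over the fixed $h(0)$-charge subspaces of $M_i$, and those subspaces are \emph{not} $V$-modules (modes of charged vectors move between charge sectors), so Zhu's theorem \cite{Z} asserts nothing about the vector $(f^i_\mu)$; invoking it there is a category error, not a convergence issue. To repair this route one must decompose each $M_i$ over the Heisenberg subalgebra generated by $h$ and identify the $f^i_\mu$ with characters of the commutant, which requires the structure theory of strongly regular VOAs \cite{M} together with modularity input for the coset — none of which you supply. A secondary unjustified step feeds into this: your opening reduction ``so that $\langle h,h\rangle\in\bb{Z}$'' does not follow from Theorem~\ref{proph}, since integrality of the spectrum of $h(0)=\sum_i\lambda_i h_i(0)$ does not by itself constrain the scalars $\lambda_i$ enough to force an integral index, and the theta decomposition into finitely many $\theta_{m,\mu}$ needs it. Your fallback — rerunning Zhu's recursion with Jacobi-theta/twisted-Weierstrass kernels in place of the Eisenstein series — is in fact precisely how the cited proof proceeds (Miyamoto's modular invariance for such theta-traces, on which \cite{KM} relies), but you only name it without executing any of it. Since (i) is the substance of the theorem, the proposal as written leaves the main claim unproved.
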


\quad We can use the tranformation property \eqref{vvjac} to deduce information about the coefficients $c^i(n,r)$.  For each $i\in\{1,.. ., k\}$,  set $d_i = \max_{j\in\{1,.. .,k\}}|s_i-s_j|$, and set $m = \left<h,h\right>/2$.  Then we have 
\bigskip
\begin{prop}\label{quadbnd} With the previous notation, one has $c^i(n,r) = 0$ if
\[r^2 > m^2 + 4m(n+d_i).\]\end{prop}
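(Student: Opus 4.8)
The plan is to read off the bound directly from the elliptic transformation law \eqref{vvjac}, which is precisely the transformation under the lattice $\mathbb{Z}\tau+\mathbb{Z}$ of a (vector-valued) Jacobi form of index $m=\langle h,h\rangle/2$. Write $q=e^{2\pi i\tau}$ and $\zeta=e^{2\pi i z}$. By Theorem \ref{proph} we may take $h$ so that $\langle h,h\rangle\in 2\mathbb{Z}$ and $h(0)$ has integral spectrum; hence $m$ is a positive integer (positivity coming from the definiteness of the form on $V_1$), and the $\zeta$-exponents $r$ appearing in \eqref{jacpows} are integers. The strategy is the classical one for Jacobi forms: use the elliptic transformation to move $c^i(n,r)$ to a coefficient whose $\zeta$-exponent has been reduced into the range $|r|\leq m$, and then invoke holomorphy (non-negativity of the $q$-exponent) to bound $r^2$.

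First I would convert \eqref{vvjac} into a relation among Fourier coefficients. Under $z\mapsto z+u\tau+v$ with $u,v\in\mathbb{Z}$ one has $\zeta\mapsto \zeta q^u$, while the right-hand side of \eqref{vvjac} contributes the factor $q^{-mu^2}\zeta^{-2mu}$ (since $\langle h,h\rangle=2m$). Substituting \eqref{jacpows} into both sides of \eqref{vvjac} and comparing the coefficient of $q^{\bullet}\zeta^{R}$ should give, whenever the $q$-exponents are forced to agree, an identity of the form $c^i(n,R)=c^{i'}(n',R+2mu)$, where $n'$ is fixed by matching powers of $q$, namely $s_{i'}+n'=s_i+n+uR+mu^2$. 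A short computation then shows that the discriminant $4m(s_i+n)-r^2$ is invariant under this correspondence: the cross terms $\pm 4muR$ and $\pm 4m^2u^2$ cancel, leaving $4m(s_{i'}+n')-(R+2mu)^2=4m(s_i+n)-R^2$.

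Next I would fix $(n,r)$ with $c^i(n,r)\neq 0$ and choose the unique $u\in\mathbb{Z}$ with $r_0:=r+2mu$ satisfying $|r_0|\leq m$ (each residue class mod $2m$ has exactly one such representative). The coefficient relation then gives $c^{i'}(n',r_0)=c^i(n,r)\neq 0$ for the corresponding $i'$ and $n'$. Since by \eqref{jacpows} the series $J^{i'}$ has $q$-exponents lying in $s_{i'}+\mathbb{Z}_{\geq 0}$, a nonzero coefficient forces $n'$ to be a non-negative integer. Combining $n'\geq 0$ with discriminant invariance $4m(s_i+n)-r^2=4m(s_{i'}+n')-r_0^2$, one finds
\[ r^2-r_0^2 = 4m(n-n')+4m(s_i-s_{i'}) \leq 4mn+4m|s_i-s_{i'}| \leq 4m(n+d_i), \]
using $n'\geq 0$, $m>0$, and $|s_i-s_{i'}|\leq d_i=\max_j|s_i-s_j|$. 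As $r_0^2\leq m^2$, this yields $r^2\leq m^2+4m(n+d_i)$, and the contrapositive is exactly the assertion of the proposition.

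The one genuinely delicate point is the coefficient-matching step: I must check that whenever $c^i(n,r)\neq 0$ the transformation law actually produces a matching monomial in $J^{i'}$ with a legitimate non-negative integer exponent $n'$, rather than vacuously. This is where holomorphy of the $J^{i'}$ (equivalently, the form \eqref{jacpows}) is essential — if the required $n'$ were negative or non-integral there would be no such monomial, which would instead force $c^i(n,r)=0$ and make the bound hold trivially. The remaining steps (the substitution $\zeta\mapsto\zeta q^u$, the discriminant cancellation, and the reduction to $|r_0|\leq m$) are routine bookkeeping.
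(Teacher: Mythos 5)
Your proposal is correct and follows essentially the same route as the paper: both extract the coefficient relation $c^{i'}(n+ru+mu^2+s_i-s_{i'},\,r+2mu)=c^i(n,r)$ from \eqref{vvjac} and combine it with the non-negativity (and integrality) of the $q$-exponents in \eqref{jacpows} together with the bound $|s_i-s_{i'}|\leq d_i$. The only difference is the elementary finish: the paper exhibits an integer $u$ making the quadratic $(n+d_i)+ru+mu^2$ negative (possible exactly when its roots are more than $1$ apart, i.e.\ $r^2>m^2+4m(n+d_i)$), whereas you equivalently reduce $r$ modulo $2m$ to a representative $r_0$ with $|r_0|\leq m$ and invoke the invariance of the discriminant $4m(s_i+n)-r^2$ --- the same computation in contrapositive form, with your chosen $u$ being precisely the integer nearest the vertex of the paper's quadratic.
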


\begin{proof} The transformation property \eqref{vvjac} implies that \[c^{i'}(n +ru +mu^2 +s_i-s_{i'},r+2um) = c^i(n,r).\]  Then using \eqref{jacpows}, we see that $c^i(n,r) = 0$ if $n+ru+mu^2 +s_i-s_{i'}<0$.  From this we can see that $c^i(n,r) = 0$ whenever there is an integer $u$ such that $(n+d_i)+ru+mu^2 <0$.

\quad The condition $(n+d_i)+ru +mu^2 <0$ is equivalent to the condition that the quadratic polynomial $f(x) = (n+d_i)+rx+mx^2$ has a negative value when evaluated at some integer $u$.  Since $m$ is a positive number, elementary algebra tells us that this condition is satisfied if the roots of $f$ are more than 1 unit apart, i.e., if \[\dfrac{\sqrt{r^2 -4m(n+d_i)}}{2m} \;>\;\dfrac{1}{2},\]
which is equivalent to $r^2 > m^2 +4m(n+d_i)$.\end{proof}

\quad Note that the coefficient $c^i(n,r)$ is the dimension of the space
\[M_i(n,r) = \{\;u\in M_i\;|\; L(0) u =nu\quad\text{and}\quad h(0)u = ru\;\}.\]
In particular, one has \[c^1(n,r) = Dim\{ \;v\in V_n \;|\;h(0)v = rv\;\}.\]

Now define $h^n\in\bb {Z}_{\geq 0}$ to be the largest element of the set
\begin{equation*}
\{ \quad |m| \quad \vert \quad m \text{ is an eigenvalue of }h(0)\text{ on }V_n\;\},
\end{equation*}
where $|m |$ denotes the absolute value of $m$.  In other words, for each fixed $n$, $h^n$ denotes the absolute value of the $r$ of largest absolute value for which $c^1(n,r)\neq 0$.

\bigskip

\begin{lem}\label{ppp} Let $V$ and $h$ be as above.  Then the following inequality holds for every $n$:
\[(h^n)^2\leq n(4m) + m^2+4md_1.\]
In other words, $h^n\sim O(\sqrt{n})$ as $n\to \infty$.
\end{lem}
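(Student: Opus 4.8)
The plan is to read the inequality off directly from Proposition \ref{quadbnd} by specializing the module index $i$ to the distinguished value $1$ (where $M_1 = V$). Recall that $h^n$ is by definition the largest absolute value among the eigenvalues $r$ of $h(0)$ on $V_n$; since $c^1(n,r) = \mathrm{Dim}\{v\in V_n \mid h(0)v = rv\}$, this is precisely the largest $|r|$ for which $c^1(n,r)\neq 0$. Thus the entire content of the lemma is a bound on which eigenvalues $r$ can actually occur in weight $n$, and Proposition \ref{quadbnd} already controls exactly this.

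First I would invoke Proposition \ref{quadbnd} in contrapositive form with $i = 1$. That proposition asserts $c^i(n,r) = 0$ whenever $r^2 > m^2 + 4m(n+d_i)$, so taking $i=1$ we conclude that $c^1(n,r)\neq 0$ forces
\[ r^2 \leq m^2 + 4m(n + d_1). \]
Next I would select an eigenvalue $r$ realizing the maximum, i.e. with $|r| = h^n$ and $c^1(n,r)\neq 0$. Such an $r$ exists because $V_n$ is finite dimensional and, by Theorem \ref{proph}, $h(0)$ acts semisimply on $V$ with integral spectrum, so the set of occurring eigenvalues is finite and nonempty. For this $r$ we have $(h^n)^2 = r^2$, and substituting into the displayed bound gives
\[ (h^n)^2 \leq m^2 + 4m(n+d_1) = n(4m) + m^2 + 4md_1, \]
which is exactly the claimed inequality.

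Finally, since $m = \langle h,h\rangle/2$ and $d_1 = \max_{j}|s_1 - s_j|$ are constants independent of $n$, the bound reads $(h^n)^2 \leq 4mn + O(1)$, whence $h^n \leq \sqrt{4mn + O(1)} = O(\sqrt{n})$ as $n\to\infty$, establishing the asymptotic assertion.

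As for the main obstacle: at this stage there is essentially none, because all of the hard analytic input — the Jacobi-form transformation law \eqref{vvjac} and the consequent vanishing of the Fourier coefficients $c^i(n,r)$ outside a parabolic region — has already been packaged into Proposition \ref{quadbnd}. The only points requiring care are purely bookkeeping: confirming that $h^n$ as defined coincides with the extreme value of $r$ for which $c^1(n,r)\neq 0$, and that specializing the general module index $i$ to $M_1 = V$ (with shift $s_1 = -c/24 + \lambda_1$ and the associated $d_1$) is legitimate. Both are immediate from the definitions, so the proof amounts to a one-line substitution into the inequality of Proposition \ref{quadbnd}.
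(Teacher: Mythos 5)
Your proof is correct and is essentially identical to the paper's, which disposes of the lemma in one line by citing Proposition \ref{quadbnd} with $i=1$ to get $(h^n)^2 \leq m^2 + 4m(n+d_1)$. Your write-up merely spells out the bookkeeping (contrapositive reading, choosing $r$ with $|r| = h^n$) that the paper leaves implicit.
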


\begin{proof}By Proposition \ref{quadbnd}, it follows that $(h^n)^2\leq m^2+4m(n+d_1)$ for every $n$.\end{proof}

\quad Of course, one obtains a similar statement if we replace $h$ by any complex scalar multiple $\lambda h$ of $h$.  In this case, the previous proposition says that $|\lambda h^n| \sim O(\sqrt{n})$.  This leads us to the following extension of Proposition \ref{ppp}:

\bigskip

\begin{lem}\label{bigo} Let $V$ be a simple strongly regular VOA and let $h\in V_1$ be such that $h(0)$ is semisimple.  Then
$|h^n|\sim O(\sqrt{n})$ as $n\to \infty$.\end{lem}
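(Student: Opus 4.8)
The plan is to reduce the general semisimple case to the integral-eigenvalue case already treated in Lemma~\ref{ppp}, using that a semisimple element of the reductive Lie algebra $V_1$ lies in a Cartan subalgebra carrying the distinguished basis supplied by Theorem~\ref{proph}.

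First I would locate $h$ inside a Cartan subalgebra of $V_1$. By the grading \eqref{cgraded}, the mode $h(0)$ preserves each homogeneous piece $V_n$, and on $V_1$ it acts as the adjoint map $\mathrm{ad}_h=[h,-]=h(0)|_{V_1}$. Since $h(0)$ is semisimple on all of $V$, it is semisimple on the invariant subspace $V_1$, so $\mathrm{ad}_h$ is semisimple; hence $h$ is a semisimple element of the reductive Lie algebra $V_1$ and lies in some Cartan subalgebra $H$. By Theorem~\ref{proph}, $H$ has a basis $\{h_1,\dots,h_r\}$ with each $h_i(0)$ semisimple and having integral eigenvalues, and I write $h=\sum_{i=1}^r\lambda_i h_i$ with $\lambda_i\in\bb{C}$.

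Next I would diagonalize simultaneously and estimate. Since $H$ is abelian, the zero-mode commutator $[h_i(0),h_j(0)]=(h_i(0)h_j)(0)=0$ shows the $h_i(0)$ commute; being semisimple they are simultaneously diagonalizable on the finite-dimensional space $V_n$. On a common eigenvector $v\in V_n$ with $h_i(0)v=m_i v$ and $m_i\in\bb{Z}$, the eigenvalue of $h(0)$ is $\sum_i\lambda_i m_i$, and $|m_i|\le h_i^n$ by the definition of $h_i^n$. The triangle inequality then yields
\[ \Big|\sum_{i=1}^r\lambda_i m_i\Big|\le\sum_{i=1}^r|\lambda_i|\,h_i^n, \]
and since every eigenvalue of $h(0)$ on $V_n$ is of this form, $|h^n|\le\sum_{i=1}^r|\lambda_i|\,h_i^n$. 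Applying Lemma~\ref{ppp} to each $h_i$ (equivalently the scalar-multiple form of its estimate) gives $h_i^n\sim O(\sqrt{n})$, so the finite sum is $O(\sqrt{n})$ and the result follows.

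The step that requires genuine care — and which I expect to be the main obstacle — is checking that each $h_i$ actually satisfies the hypotheses of Lemma~\ref{ppp}: one needs $h_i(0)$ to act semisimply with integral spectrum not merely on $V$ but on every simple module $M_j$, so that the Jacobi-form estimate of Proposition~\ref{quadbnd} is available for $h_i$. Theorem~\ref{proph} as quoted guarantees integrality on $V$ together with $\langle h_i,h_i\rangle\in2\bb{Z}$; the upgrade to integral spectrum on all modules is exactly what the strongly regular (modular/Jacobi-form) structure provides, and this is the ingredient that must be invoked before Lemma~\ref{ppp} can be applied term by term.
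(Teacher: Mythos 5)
Your argument is correct and is essentially the paper's own proof: the paper likewise places $h$ in a Cartan subalgebra, expands $h=\sum_i\lambda_i h_i$ in the basis of Theorem \ref{proph}, uses that the $h_i(0)$ are commuting semisimple operators to get $|h^n|\leq\sum_i|\lambda_i h_i^n|$, and concludes by the scalar-multiple form of Lemma \ref{ppp}. The subtlety you flag at the end---that Lemma \ref{ppp} requires $h_i(0)$ to act semisimply with integral spectrum on every simple module $M_j$, while Theorem \ref{proph} as quoted only asserts this on $V$---is passed over silently in the paper, so raising it is appropriate extra care rather than a deviation from the intended argument.
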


\begin{proof} Since $h$ is a semisimple element of $V_1$, we know that $h$ is an element of some Cartan subalgebra $H$ of $V_1$.  Now using a basis $\{ h_1,.. .,h_k\}$ of $H$ as in Theorem \ref{proph}, we write
\begin{equation*} 
h=\sum_{i=1}^{k}\lambda_ih_i, 
\end{equation*} 
for some $\lambda_i\in\bb{C}$, so that 
\begin{equation} \label{h0decomp}
h(0)=\sum_{i=1}^{k}\lambda_ih_i(0).
\end{equation} 
Since $\{h_i(0)\}$ is a set of commuting, semisimple operators, it follows that 
\begin{equation*}
0\leq |h^n|\leq \sum_{i=1}^k |\lambda_ih_i^n|.
\end{equation*}
Our prior remarks show that each of the above summands satisfies $|\lambda_ih_i^n| \sim O(\sqrt{n})$, and so it follows that $|h^n|\sim O(\sqrt{n})$.\end{proof}

\bigskip

\begin{lem}\label{lhspec} Let $V$ and $h$ be as in Proposition \ref{bigo}, and consider $L_h(0) = L(0) - h(0)$.  Then $Re(\mu) \geq |Im(\mu)|$ for all but finitely many $\mu \in Spec_V(L_h(0))$.\end{lem}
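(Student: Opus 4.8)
The plan is to reduce the statement to the growth estimate of Lemma \ref{bigo} by passing to joint eigenspaces of $L(0)$ and $h(0)$. First I would record the eigenspace structure. Since $V$ is of CFT-type, $L(0)$ acts semisimply on $V=\bigoplus_{n\geq 0}V_n$ with eigenvalue $n$ on $V_n$, and each $V_n$ is finite-dimensional. Because $h\in V_1$, the operator $h(0)$ maps $V_\mu$ into $V_{\mu+1-0-1}=V_\mu$, so it preserves every $V_n$ and commutes with $L(0)$; as $h(0)$ is semisimple by hypothesis, we may decompose $V_n=\bigoplus_r V_n^r$ into joint eigenspaces, where $V_n^r$ is the $r$-eigenspace of $h(0)$. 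On $V_n^r$ the operator $L_h(0)=L(0)-h(0)$ acts as the scalar $n-r$, so every $\mu\in Spec_V(L_h(0))$ has the form $\mu=n-r$ for some $n\geq 0$ and some eigenvalue $r$ of $h(0)$ on $V_n$.

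Next I would translate the desired inequality into a condition on $r$. Writing $\mu=n-r$ gives $Re(\mu)=n-Re(r)$ and $Im(\mu)=-Im(r)$, so the condition $Re(\mu)\geq |Im(\mu)|$ becomes $n-Re(r)\geq |Im(r)|$. Since $|Re(r)|\leq |r|$ and $|Im(r)|\leq |r|$, it suffices to guarantee $n-|r|\geq |r|$, that is $n\geq 2|r|$. By the definition of $h^n$ one has $|r|\leq h^n$ for every eigenvalue $r$ of $h(0)$ on $V_n$, so the inequality $Re(\mu)\geq |Im(\mu)|$ holds for every $\mu$ arising from $V_n$ as soon as $n\geq 2h^n$.

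It then remains to verify $n\geq 2h^n$ for all but finitely many $n$, and this is exactly where Lemma \ref{bigo} enters: it furnishes a constant $C$ and an integer $N_0$ with $h^n\leq C\sqrt{n}$ for all $n\geq N_0$, whence $n\geq 2h^n$ whenever $n\geq \max(N_0,4C^2)$. Thus for all but finitely many $n$ every eigenvalue of $L_h(0)$ coming from $V_n$ already satisfies $Re(\mu)\geq |Im(\mu)|$. Finally, the remaining indices $n<\max(N_0,4C^2)$ are finite in number, and each corresponding $V_n$ is finite-dimensional, so together they contribute only finitely many eigenvalues $\mu$; consequently $Re(\mu)\geq |Im(\mu)|$ for all but finitely many $\mu\in Spec_V(L_h(0))$.

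I expect no serious obstacle in this argument, because the analytic content is entirely packaged in the $O(\sqrt{n})$ growth of $h^n$ from Lemma \ref{bigo}; the remaining work is the elementary observation that square-root growth of the modulus of the $h(0)$-eigenvalues is eventually dominated by the linear growth of $n$. The only point demanding care is the bookkeeping ensuring that the exceptional set of $\mu$ is genuinely finite, for which I rely on the finite-dimensionality of each $V_n$ together with the finiteness of the range $n<\max(N_0,4C^2)$.
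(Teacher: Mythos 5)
Your proof is correct, and it rests on the same pivot as the paper's: the $O(\sqrt{n})$ growth of $h^n$ from Lemma \ref{bigo} is eventually dominated by the linear growth of $n$. The difference is in the decomposition you use to parametrize the spectrum. The paper keeps the Cartan-basis expansion \eqref{h0decomp} visible inside this proof, writing $\mu = n - \sum_i \lambda_i a_i$ with $a_i \in Spec_{V_n}(h_i(0)) \subset \bb{Z}$, and then estimates the real and imaginary parts separately through the bounds $k\cdot\max_i|Re(\lambda_i h_i^n)|$ and $k\cdot\max_i|Im(\lambda_i h_i^n)|$, mediated by an auxiliary constant $m$ chosen so that $mh_1^n$ dominates both. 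You instead diagonalize $h(0)$ itself --- legitimate, since semisimplicity of $h(0)$ is exactly the hypothesis inherited from Lemma \ref{bigo} --- and write $\mu = n - r$ with $|r| \leq h^n$, reducing the whole inequality to the single condition $n \geq 2h^n$ via $|Re(r)|, |Im(r)| \leq |r|$. Your route buys two things: the exceptional set is quantified explicitly ($n < \max(N_0, 4C^2)$, each such $V_n$ finite-dimensional, hence finitely many bad $\mu$), and you sidestep a genuine looseness in the paper's write-up, where the constant $m$ as chosen depends on $n$ (and the index with $h_i^n > 0$ may vary with $n$), so that the subsequent appeal to Lemma \ref{bigo} to get $n - mh_1^n > 0$ for all but finitely many $n$ requires a uniformity the paper does not spell out. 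What the paper's version buys is mainly expository: it re-exposes the integral eigenvalues $a_i$ that make Lemma \ref{bigo} work, but since that content is already packaged in the lemma's conclusion, your treatment of the eigenvalues of $h(0)$ wholesale is the cleaner reduction.
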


\begin{proof} Making use of the decomposition \eqref{h0decomp}, one sees that 
\begin{equation*}
Spec_{V}(L_h(0))\;\subseteq\; \bb{Z} -\sum_{i=1}^k\lambda_i\bb{Z}\;\subset\;\bb{C}.
\end{equation*}
Any $\mu\in Spec_{V}(L_h(0))$ is of the form 
\[\mu = n-\sum_{i=1}^k\lambda_ia_i,\]
where $a_i\in Spec_{V_n}(h_i(0))\subset\bb{Z}$.

\quad If $h(0)$ does not act as $0$ on $V_n$, then $h_i^n >0$ for some $i$.  Without loss of generality we may assume $h_1^n>0$.  Now choose $m\in\bb{R}$ such that 
\begin{equation*}
mh_1^n \;>\; k\cdot \max_{i}|Re(\lambda_ih_i^n)| + k\cdot \max_i |Im(\lambda_ih_i^n)|.
\end{equation*}

Using Lemma \ref{bigo}, we see that that $n-mh^n_1 >0$ for all but finitely many $n$.  Therefore, since $h_1^n >0$, we have
\begin{equation*}
n-k\cdot \max_{i}|Re(\lambda_ih_i^n)| - k\cdot \max_i |Im(\lambda_ih_i^n)| \;>\; n-mh_1^n\; >\;0
\end{equation*}
for all but finitely many $n$.  In particular,
\begin{equation*}
n-k\cdot \max_{i}|Re(\lambda_ih_i^n)| \;>\; k\cdot \max_i |Im(\lambda_ih_i^n)|.
\end{equation*}
for all but finitely many $n$.

Then we have
\begin{eqnarray*}
Re\left(n-\sum_{i=1}^k \lambda_ia_i \right) &=&n-Re\left(\sum_{i=1}^k \lambda_ia_i \right) \\
&\geq& n- \left|Re\left(\sum_{i=1}^k \lambda_ia_i \right)\right| \\
&\geq & n-k\cdot \max_i|Re(\lambda_i h_i^n)| \\
&>& k\cdot \max_i|Im(\lambda_ih_i^n)| \\ 
&\geq &\sum_{i=1}^k | Im(\lambda_ih_i^n)| \\
&\geq &\sum_{i=1}^k | Im(\lambda_ia_i)| \\
& \geq & \left|Im\left(\sum_{i=1}^r\lambda_ia_i\right)\right|
\end{eqnarray*}
for all but finitely many $n$.  Since $h(0)$ has only finitely many eigenvalues on each $V_n$, it follows that the above inequality holds for all but finitely many $L_h(0)$-eigenvalues $\mu$.  \end{proof}

\bigskip
\begin{lem}\label{findim} Let $V$ and $h$ be as in Proposition \ref{bigo}.  Then the $L_h(0)$ eigenspaces are all finite dimensional.\end{lem}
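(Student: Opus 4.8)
The plan is to reduce the statement to the asymptotic bound already established in Lemma \ref{bigo}. The starting observation is that the zero mode $h(0)$ preserves each graded piece $V_n$: since $h\in V_1$, formula \eqref{cgraded} gives $h(0)V_n\subseteq V_{n+1-0-1}=V_n$. Hence $h(0)$ commutes with $L(0)$, and since $h(0)$ is semisimple on $V$ and stabilizes each finite-dimensional $V_n$, the operator $L_h(0)=L(0)-h(0)$ acts semisimply with a simultaneous eigenbasis adapted to the decomposition $V=\bigoplus_n V_n$. Concretely, a vector $v\in V_n$ with $h(0)v=av$ satisfies $L_h(0)v=(n-a)v$, so the $\mu$-eigenspace of $L_h(0)$ is
\[
\bigoplus_{n\in\bb{N}}\{\,v\in V_n : h(0)v=(n-\mu)v\,\}.
\]

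Next I would note that the $n$-th summand is nonzero only if $n-\mu$ is genuinely an eigenvalue of $h(0)$ on $V_n$; by the definition of $h^n$ this forces $|n-\mu|\le h^n$. Since $\mu$ is fixed, $|n-\mu|\ge n-Re(\mu)$ grows linearly in $n$, whereas Lemma \ref{bigo} gives a constant $C$ with $h^n\le C\sqrt{n}$ for all large $n$. Hence the chain $n-Re(\mu)\le|n-\mu|\le h^n\le C\sqrt{n}$ can hold for only finitely many $n$, so only finitely many graded pieces $V_n$ contribute to the $\mu$-eigenspace.

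Finally, each contributing summand is a subspace of the finite-dimensional space $V_n$ and is therefore finite-dimensional; being a finite sum of such spaces, the $\mu$-eigenspace of $L_h(0)$ is finite-dimensional, as required. The whole argument is essentially a corollary of Lemma \ref{bigo}: the only point needing genuine care is the semisimplicity and explicit eigenspace description of $L_h(0)$ in the first step, after which the comparison of linear growth against $\sqrt{n}$ growth does all of the work, and no substantial obstacle remains.
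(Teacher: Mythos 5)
Your proof is correct and is essentially the paper's own argument: the paper runs the identical comparison by contradiction, observing that an infinite-dimensional $\mu$-eigenspace of $L_h(0)$ would force $k-\mu\in Spec_{V_k}(h(0))$ for infinitely many $k\in\bb{N}$, which the $O(\sqrt{k})$ bound of Lemma \ref{bigo} rules out. Your direct version simply makes explicit the details the paper leaves implicit (that $h(0)$ preserves each $V_n$, hence the $\mu$-eigenspace decomposes as $\bigoplus_n\{v\in V_n : h(0)v=(n-\mu)v\}$), so no further comment is needed.
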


\begin{proof} Assume to the contrary that $\mu$ is the eigenvalue corresponding to an infinite dimensional $L_h(0)$-eigenspace.  Since each $V_n$ is of finite dimension, it follows that there must be an infinite number of $k\in\bb{N}$ for which \[k-\mu \in Spec_{V_k}(h(0)),\]
which is impossible due to Lemma \ref{bigo}.\end{proof}

\bigskip
\begin{thm}\label{pvoa} Let $V$ be a simple strongly regular VOA.  Then $V^h$ is a simple PVOA for any $h\in V_1$. \end{thm}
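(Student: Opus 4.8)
The plan is to equip $V$ with the $\mathbb C$-grading given by the generalized eigenspaces of $L_h(0) = L(0) - h(0)$, to verify that $(V,Y,\mathbf 1)$ with this grading is a $\mathbb C$-graded vertex algebra, and then to extract the four PVOA axioms, most of which are already packaged in Lemmas \ref{bigo}, \ref{lhspec} and \ref{findim}. First I would remove the semisimplicity hypothesis on $h(0)$ present in those lemmas. As $V$ is strongly regular, $V_1$ is a reductive, hence algebraic, Lie algebra, so $h$ admits an abstract Jordan decomposition $h = h_s + h_n$ with $h_s, h_n \in V_1$ commuting, $h_s$ semisimple and $h_n$ nilpotent. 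Since $a \mapsto a(0)$ is a Lie homomorphism and $h(0)$ preserves each finite-dimensional $V_n$, functoriality of the Jordan decomposition on the representations $V_n$ gives $h(0) = h_s(0) + h_n(0)$ with $h_s(0)$ semisimple and $h_n(0)$ locally nilpotent. Hence the generalized $L_h(0)$-eigenspaces coincide with the honest $L_{h_s}(0)$-eigenspaces and $Spec_V(L_h(0)) = Spec_V(L_{h_s}(0))$, while $h_n(0)$ furnishes exactly the nilpotent part of $L_h(0)$ that axiom (iii) permits. Every spectral assertion therefore reduces to the semisimple element $h_s$, to which Lemmas \ref{bigo}, \ref{lhspec}, \ref{findim} apply verbatim.

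Write $V = \bigoplus_\mu V_\mu$ with $V_\mu$ the generalized $L_h(0)$-eigenspace. Decomposing a homogeneous $a$ into its $L(0)$-homogeneous, $h(0)$-Jordan pieces and combining $[L(0),a(n)] = (k - n - 1)a(n)$ (for $a$ of $L(0)$-weight $k$) with $[h(0),a(n)] = (h(0)a)(n)$, one gets $[L_h(0),a(n)] = (\lambda - n - 1)a(n)$ up to a term coming from $h_n(0)$ that stays within the same generalized eigenspace; this is exactly the grading condition \eqref{cgraded}, $a(n)V_\mu \subseteq V_{\mu+\lambda-n-1}$ for $a\in V_\lambda$. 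Granting the grading, the PVOA axioms drop out: axiom (i) is the proposition that the modes \eqref{shiftedmodes} of $\omega_h = \omega + L(-1)h$ close into the Virasoro algebra (here $h(1)h \in V_0 = \mathbb C\mathbf 1$ automatically, and $\beta = 0$ by strong regularity, so $c_h = c - 12\alpha$), with $\omega_h \in V_2$ because $\omega \in V_2$ and $L(-1)h = Th \in V_2$; axiom (ii) is immediate from \eqref{shiftedmodes}, namely $L_h(-1) = L(-1) = T$; the generalized-eigenspace and finite-dimensionality clauses of axiom (iii) hold by construction and by Lemma \ref{findim}; and axiom (iv) is precisely Lemma \ref{lhspec}.

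The substantive step — and the one I expect to be the main obstacle — is checking that $(V,Y,\mathbf 1)$ with this grading really is a $\mathbb C$-graded vertex algebra, i.e. that it is generated by lowest weight vectors. The essential input is Lemma \ref{bigo}: the eigenvalues of $h(0)$ on $V_n$ are $O(\sqrt n)$, so the real parts of the weights satisfy $Re(\mu) = n - Re(\gamma) \to \infty$ and are bounded below, and each real part is attained on only finitely many $V_\mu$. Unlike the VOA case, $\mathbf 1$ need not be a lowest weight vector: the finitely many weights allowed by axiom (iv) to have $Re(\mu) < |Im(\mu)|$ contribute modes that neither strictly raise the real part nor act as integer zero modes, and such modes must annihilate any lowest weight vector. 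I would therefore study the finite-dimensional bottom slab $\bigoplus_{Re(\mu)=R_0} V_\mu$, where $R_0 = \min Re\,(Spec_V(L_h(0)))$: every strictly real-part-lowering mode kills this slab for weight reasons, so a lowest weight vector there is exactly a vector annihilated by the finitely many critical imaginary-part-shifting modes $a(Re(\lambda)-1)$ with $Im(\lambda)\neq 0$, and I would produce one by a filtration/extremal argument on the imaginary part of the weight (controlled by the commutation relations \eqref{LieCommRlns}). Once a single nonzero lowest weight vector exists, generation is free: $V$ simple means $V$ is irreducible as a module over itself, so the submodule generated by any nonzero vector, in particular by the lowest weight vectors, is all of $V$.

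Finally, simplicity of $V^h$ costs nothing. The quadruple $V^h = (V,Y,\mathbf 1,\omega_h)$ has the same underlying vertex algebra $(V,Y,\mathbf 1)$ as the simple VOA $V$, and any proper graded ideal of $V^h$ would be a proper nonzero ideal of that vertex algebra, of which there are none. In summary, the spectral Lemmas \ref{bigo}--\ref{findim} and the deformation proposition hand over axioms (i)--(iv) almost directly, and the real work is concentrated in establishing the existence of lowest weight vectors making $V^h$ a $\mathbb C$-graded vertex algebra — precisely the place where the bound $h^n \sim O(\sqrt n)$ and the finiteness in axiom (iv) are indispensable.
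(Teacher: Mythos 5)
Your proposal follows the paper's proof almost step for step. The paper likewise produces the Jordan decomposition $h=h^{ss}+h^{n}$ with $h^{ss},h^{n}\in V_1$ (it gets there via $C_2$-cofiniteness and finite generation rather than algebraicity of the reductive Lie algebra $V_1$, but the content is the same), reduces to the semisimple case, identifies the generalized $L_h(0)$-eigenspaces with the honest $L_{h}^{ss}(0)$-eigenspaces exactly as you do, extracts axioms (iii) and (iv) from Lemmas \ref{findim} and \ref{lhspec}, axioms (i) and (ii) from \eqref{shiftedmodes}, and obtains both generation by lowest weight vectors and simplicity of $V^h$ from simplicity of the underlying vertex algebra.

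The one place you depart from the paper is the existence of lowest weight vectors, and here a caution is in order. You are right that $\mathbf{1}$ need not be a lowest weight vector, and right that on the bottom slab of minimal real part the only obstruction is the critical modes $a(\wt{a}-1)$ with $Re(|a|)\in\mathbb{Z}$ and $Im(|a|)\neq 0$. But note first that such modes arise from \emph{every} weight with integral real part and nonzero imaginary part, not only from the finitely many weights exempted by axiom (iv), so the finiteness in (iv) is not the control you want; and second, your proposed extremal/filtration argument on the imaginary part does not obviously close: the imaginary shifts occur in both directions, and the critical modes do not span a Lie subalgebra --- the paper's own computation in Section 3.4, where $[a_{n-1},b_{n-1}]\in (V_{Lie})^0$ for $|a|=n+i\lambda$, $|b|=n-i\lambda$, exhibits exactly this failure, which rules out an Engel-type common-kernel argument. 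Each critical mode is individually nilpotent on the finite-dimensional slab, but a family of nilpotent operators without bracket closure need not annihilate a common nonzero vector (two mutually inverse-direction injective shifts between two weight spaces of equal dimension is the model obstruction). For what it is worth, the paper itself dispatches this step in a single sentence ($V_0\neq 0$ together with Lemma \ref{lhspec} ``implies that $V^h$ has a lowest weight space''), so your write-up is, if anything, more candid about where the difficulty sits; but as sketched, your argument does not yet fill it.
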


\begin{proof}  We first remark that simplicity of $V^h$ is equivalent to simplicity of $V$ since they are both the same underlying vertex algebra.  The $C_2$-cofiniteness of $V$ implies that $V$ is finitely generated, that is, 
\begin{equation*}
V = \left<\bigoplus_{n=0}^{N}V_n\right>.
\end{equation*}  Then the action of $h(0)$ on $V$ is completely determined by the action of $h(0)$ on $\bigoplus_{n=0}^{N}V_n$, which is a finite dimensional $V_1$-module.  Therefore, we consider the abstract Jordan decomposition $h(0)=h^{ss}(0) + h^n(0)$, with $h^{ss}(0)$ a semisimple operator and $h^n(0)$ a nilpotent operator such that $h^{ss}(0)$ and $h^n(0)$ commute.  The theory of Lie algebras then ensures that the operators $h^{ss}(0)$ and $h^n(0)$ are modes of elements $h^{ss}$, $h^n\in V_1$.  Thus any $h\in V_1$ decomposes as $h=h^{ss} + h^n$, where $h^{ss}(0)$ is a semisimple operator on $V$ and $h^n(0)$ is a nilpotent operator on $V$ that commutes with $h^{ss}(0)$.  

\quad First assume that $h^n = 0$.  Then $L_h(0) = L(0) - h^{ss}(0)$ is a semisimple operator.  In this case $V^h$ has a grading 
\[V^h = \bigoplus_{\mu\in Spec_{V^h}(L(0))}V_{\mu}\] which satisfies \eqref{cgraded}.  Moreover, we know that $\bb{1}\in V_0$, so $V_0\neq 0$.  This fact, together with Lemma \ref{lhspec}, implies that $V^h$ has a lowest weight space $V^h_{\lambda}$.  Since $V^h$ is simple as a vertex algebra, it is generated by $V^h_{\lambda}$.  This shows that $V^h$ is a $\bb{C}$-graded vertex algebra.  It is clear from \eqref{shiftedmodes} that $L(-1)=L_h(-1)$.  Therefore, recalling Lemmas \ref{findim} and \ref{lhspec}, we see that $V^h$ is a PVOA.

\quad If $h^n$ is not zero, then $h^n(0)$ simply acts on each $L_h^{ss}(0)$-eigenspace since $[h^n(0), L_h^{ss}(0)]=0$, and so we see that the generalized $L_h(0)$-eigenspaces are precisely the $L_h^{ss}(0)$-eigenspaces.  The finite dimensionality of each $L_h^{ss}(0)$-eigenspace $V^h_{\lambda}$ implies that $(L_h(0)-\lambda)$ is a nilpotent operator on $V^h_{\lambda}$.  The result follows.\end{proof}

\subsection{Regularity of Lattice PVOAs}

Let $V_L$ be a the VOA associated to a rank $k$ positive definite even lattice $L$ (see \cite{FLM}).  Dong \cite{D} proved that $V_L$ is a rational VOA, and that the irreducible ordinary modules for $V_L$ correspond to cosets of $L$ in its dual lattice $L^{\circ}$.  In particular, the irreducible ordinary $V_L$-modules are of the form
\begin{equation*}
V_{L-\lambda} = M(1)\otimes \bb{C}[L-\lambda],
\end{equation*}
where $\lambda\in L^{\circ}$ and $\bb{C}[L-\lambda]$ is the corresponding module for the twisted group algebra $\bb{C}\{ L\}$.

We recall Theorem 3.16 of \cite{DLM1}:
\bigskip
\begin{thm} Let $L$ be any positive definite even lattice.  Then any weak $V_L$-module is completely reducible, and any simple weak $V_L$-module is isomorphic to $V_{L-\lambda}$ for some $\lambda$ in $L^{\circ}$.  In other words, $V_L$ is regular.\end{thm}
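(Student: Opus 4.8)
The plan is to reduce the statement to the representation theory of the rank-$k$ Heisenberg algebra sitting inside $V_L$, and then to use the lattice vertex operators to reassemble the pieces. Recall that $V_L = M(1)\otimes \bb{C}[L]$, where $M(1)$ is the Fock space for the affinization $\hat{\mf{h}}$ of the Cartan $\mf{h} = \bb{C}\otimes_{\bb{Z}} L$. Given any weak $V_L$-module $M$, I would first restrict the action to the Heisenberg sub-VOA $M(1)$, so that $M$ becomes a module for $\hat{\mf{h}}$ on which the canonical central element acts as the identity (i.e.\ at level one), and on which the lower-truncation condition supplied by the weak-module axioms holds.

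First I would invoke the standard fact that, at nonzero level, every such restricted $\hat{\mf{h}}$-module is completely reducible with irreducible constituents the Fock modules $M(1,\lambda)$, $\lambda\in\mf{h}^*$. Concretely, setting $U_\lambda = \{v\in M : h(n)v = 0 \text{ for } n>0,\ h(0)v = \langle\lambda,h\rangle v \text{ for all } h\in\mf{h}\}$, one obtains a decomposition $M\cong\bigoplus_{\lambda}M(1,\lambda)\otimes U_\lambda$ as $\hat{\mf{h}}$-modules, so that all the structure of $M$ beyond the Heisenberg action is encoded on the ``vacuum spaces'' $U_\lambda$.

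Next I would bring in the lattice vertex operators $Y(e^{\alpha},z)$, where $e^{\alpha}$ is the image of $\alpha\in L$ in $\bb{C}\{L\}\subseteq V_L$. Their modes shift the $h(0)$-weight by $\alpha$ up to Heisenberg terms, hence induce maps $U_\lambda\to U_{\lambda+\alpha}$, and their operator products reproduce the relations of the twisted group algebra $\bb{C}\{L\}$; this is precisely where positive-definiteness and evenness of $L$ enter. The crucial point is that compatibility of these operators with the weak-module expansion forces $\langle\alpha,\lambda\rangle\in\bb{Z}$ for every $\alpha\in L$, whence every weight $\lambda$ occurring lies in the dual lattice $L^\circ$; moreover the nonzero modes connect $U_\lambda$ to $U_{\lambda+\alpha}$, so the support of $M$ splits as a union of cosets $\lambda+L$ with $\lambda\in L^\circ$.

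Finally, for each such coset I would check that $\bigoplus_{\beta\in L}M(1,\lambda+\beta)\otimes U_{\lambda+\beta}$, equipped with the combined Heisenberg and lattice action, is a genuine $V_L$-submodule isomorphic to a direct sum of copies of the irreducible module $V_{L-\lambda}$ (irreducibility being Dong's result \cite{D}), the multiplicities being recorded by a semisimple action of $\bb{C}\{L\}$ on the vacuum spaces. Summing over cosets yields $M\cong\bigoplus_i V_{L-\lambda_i}\otimes W_i$, which gives both complete reducibility and the classification of simple weak modules. I expect the main obstacle to lie in the two places where the bareness of the weak-module hypothesis bites: establishing complete reducibility of $M$ as a level-one $\hat{\mf{h}}$-module with no grading to lean on, and verifying that the coset-pieces are honest $V_L$-submodules admitting no nontrivial self-extensions, so that $M$ is the full direct sum rather than merely filtered by the $V_{L-\lambda}$.
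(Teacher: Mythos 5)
A point of context first: the paper does not prove this theorem at all --- it is recalled verbatim as Theorem 3.16 of \cite{DLM1} --- so the relevant comparison is with the proof given there, whose overall architecture (decompose a weak module over the Heisenberg subalgebra, then use the lattice operators on the vacuum spaces) your outline does reproduce. However, your first step contains a genuine gap, and it is the crux of the whole theorem. The ``standard fact'' you invoke --- that every restricted level-one $\hat{\mf{h}}$-module is completely reducible with constituents the Fock modules $M(1,\lambda)$ --- is false as stated. What is true (the Stone--von Neumann-type theorem, cf.\ \cite{LL}, Theorem 6.2.18) is only that $M\cong M(1)\otimes\Omega_M$ as a module for the subalgebra generated by the modes $h(n)$ with $n\neq 0$, where $\Omega_M$ is the vacuum space; the operator $h(0)$ is \emph{central} in $\hat{\mf{h}}$ and may act on $\Omega_M$ arbitrarily. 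Taking $\Omega_M=\bb{C}^2$ with $h(0)$ acting by a Jordan block produces a restricted level-one module that is a non-split self-extension of a Fock module --- indeed the Heisenberg VOA $M(1)$ is the standard example of a non-rational vertex algebra --- so your eigenspace decomposition into the pieces $U_\lambda$ cannot be extracted from Heisenberg theory alone, and the subsequent bookkeeping collapses unless semisimplicity of $h(0)$ is established first.

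The missing idea, which is what \cite{DLM1} actually supplies, is a monodromy (no-logarithm) argument driven by the lattice operators: on a weak module $Y(e^{\alpha},z)$ involves only integer powers of $z$, whereas conjugating by the operators $E^{\pm}(\alpha,z)$ and composing $e^{\alpha}$ with $e^{-\alpha}$ (using $e^{\alpha}e^{-\alpha}=\pm 1$ in $\bb{C}\{L\}$, so these operators are invertible on $\Omega_M$) produces factors of the shape $z^{\alpha(0)}$ built from $h(0)$. A nilpotent part of $h(0)$ would generate $\log z$ terms and a non-integral eigenvalue would generate non-integral exponents, contradicting the weak-module axioms; this simultaneously forces $h(0)$ to be semisimple with spectrum pairing integrally against $L$, i.e.\ contained in $L^{\circ}$ (your third paragraph assumed the eigenvalue decomposition in advance, proving only integrality \emph{given} eigenvectors). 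With that in hand, each vector of $\Omega_M$ is shown to generate an irreducible ordinary submodule isomorphic to some $V_{L-\lambda}$, so complete reducibility comes out directly, rather than through the separate ``no nontrivial self-extensions'' verification you defer at the end. You flagged both soft spots honestly, but they are really one missing mechanism, and without it the proposal does not close.
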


\quad Since $V_L$ is a simple, strongly regular VOA, we know that for any $h\in V_1$, one obtains a PVOA by shifting the conformal structure on $V_L$ by the element $h$.  Specifically, we have the PVOA $V_L^h=(V_L,Y,\bb{1},\omega_h)$, where $\omega_h=\omega-L(-1)h$.  

\bigskip

\begin{thm}\label{thm4} The PVOA $V_L^h$ is regular.  In other words, any weak $V_L^h$-module is a direct sum of simple ordinary $V_L^h$-modules.  \end{thm}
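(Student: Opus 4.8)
The plan is to exploit the fact that $V_L^h$ and $V_L$ share the same underlying vertex algebra $(V_L, Y, \bb{1}, T)$: they differ only in the choice of conformal vector, and since $L_h(-1) = L(-1) = T$ even the translation operator is unchanged. Consequently the notion of \emph{weak module} for $V_L$ coincides with that for $V_L^h$, as do the notions of submodule and hence of simplicity, since none of these reference $\omega$. I would therefore begin by invoking the regularity of $V_L$ (the cited Theorem 3.16 of \cite{DLM1}): every weak $V_L$-module $M$ decomposes as a direct sum of simple weak modules, each isomorphic to some $V_{L-\lambda}$ with $\lambda\in L^{\circ}$. Reading this very decomposition through the dictionary above exhibits $M$ as a direct sum of the modules $V_{L-\lambda}$, each of which is simple as a weak $V_L^h$-module.

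It then remains only to verify that each $V_{L-\lambda}$ is an \emph{ordinary} $V_L^h$-module, i.e.\ that the grading operator $L_h(0) = L(0)-h(0)$ meets the three requirements of the definition. For this I would pass to the explicit realization $V_{L-\lambda} = M(1)\otimes\bb{C}[L-\lambda]$. Because $V_1 = \mf{h}$ is abelian, $h(0)$ acts semisimply, scaling the subspace $M(1)\otimes e^{\beta}$ (for $\beta\in L-\lambda$) by $\langle h,\beta\rangle$ independently of the Fock level; moreover $[L(0),h(0)]=0$, so $L_h(0)$ is a difference of commuting semisimple operators and is itself semisimple. This furnishes the required $\bb{C}$-grading of $V_{L-\lambda}$ by $L_h(0)$-eigenspaces, and in particular there is no nilpotent complication of the kind handled by the Jordan decomposition in Theorem \ref{pvoa}.

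The crux is then the eigenvalue estimate. On the weight-$n$ level of the Fock space over $e^{\beta}$ the operator $L_h(0)$ acts by the scalar
\[
\tfrac{1}{2}\langle\beta,\beta\rangle-\langle h,\beta\rangle+n,\qquad n\in\bb{Z}_{\geq 0}.
\]
Since $L$ is positive definite, $\tfrac{1}{2}\langle\beta,\beta\rangle$ is a positive definite quadratic form in $\beta$ while $\mathrm{Re}\langle h,\beta\rangle$ is linear, so $\mathrm{Re}\bigl(\tfrac{1}{2}\langle\beta,\beta\rangle-\langle h,\beta\rangle\bigr)\to+\infty$ as $|\beta|\to\infty$. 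Hence for any constant $C$ only finitely many pairs $(\beta,n)$ yield an eigenvalue of real part $\leq C$; taking $C=0$ gives $\mathrm{Re}(\mu)>0$ for all but finitely many $\mu\in Spec_{V_{L-\lambda}}(L_h(0))$, and the finiteness of the set of $(\beta,n)$ producing a fixed $\mu$, combined with the finite dimensionality of each Fock level, gives finite dimensionality of each eigenspace. This is precisely the module-theoretic analogue of Lemmas \ref{lhspec} and \ref{findim}, which were established for $V$ itself. Assembling the pieces, $M$ is a direct sum of simple ordinary $V_L^h$-modules, which is the assertion of Theorem \ref{thm4}.

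I expect the only genuine obstacle to be the bookkeeping in this last estimate: although each coset $L-\lambda$ is infinite and each Fock tower is infinite, one must check that positive definiteness controls the real part \emph{uniformly} over all $(\beta,n)$ so that every sublevel set $\{\mu : \mathrm{Re}(\mu)\leq C\}$ of the spectrum is finite. The structural observations — equality of weak module categories and preservation of simplicity — are essentially formal once the coincidence $L_h(-1)=T$ is noted, and infinite direct sums pose no difficulty since regularity applies to arbitrary weak modules.
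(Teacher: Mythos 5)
Your overall route is the paper's: transfer a weak $V_L^h$-module to $V_L$, invoke the regularity of $V_L$ (\cite{DLM1}) and Dong's classification \cite{D} to split it into copies of $V_{L-\lambda}$, and then verify ordinariness from the explicit eigenvalue $\tfrac12\langle\beta,\beta\rangle-\langle h,\beta\rangle+n$. Even your ``crux'' estimate is the same as the paper's, which merely packages it as the theta-function identity $Z_{V[L-\lambda],V_L^h}=Z_{M(1),V_L}(q)\cdot\theta_{L-h-\lambda}(q)$; completing the square, with $h_1=\mathrm{Re}\,h$ one has $\mathrm{Re}\bigl(\tfrac12\langle\beta,\beta\rangle-\langle h,\beta\rangle\bigr)=\tfrac12\langle\beta-h_1,\beta-h_1\rangle-\tfrac12\langle h_1,h_1\rangle$, and since $L-\lambda$ is discrete in a positive definite space every sublevel set of the real part meets it in finitely many $\beta$ — so the uniformity you worried about at the end is automatic. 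The genuine error is elsewhere: your claim that ``$V_1=\mf{h}$ is abelian,'' used to dismiss any nilpotent complication. This is false whenever $L$ contains vectors of squared length $2$: then $V_1=\mf{h}\oplus\bigoplus_{\langle\alpha,\alpha\rangle=2}\bb{C}e^{\alpha}$ is a nonabelian reductive Lie algebra (for $L=A_1$, $V_1\cong \mf{sl}_2$), and Theorem \ref{thm4} is asserted for \emph{arbitrary} $h\in V_1$. For such $h$, $h(0)$ need not act semisimply on $V_{L-\lambda}$, and even when $h$ is semisimple your formula $h(0)(u\otimes e^{\beta})=\langle h,\beta\rangle\,u\otimes e^{\beta}$ is valid only for $h$ in the Cartan $\mf{h}=\bb{C}\otimes_{\bb{Z}}L$, not for, say, $h=e^{\alpha}$.

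The paper addresses exactly this point immediately before its proof: it invokes the abstract Jordan decomposition $h=h^{ss}+h^{n}$ from the proof of Theorem \ref{pvoa}, notes that only the spectrum of $L_h^{ss}(0)=L(0)-h^{ss}(0)$ is at issue, and reduces to the case $h^{n}=0$; since $h^{n}(0)$ commutes with $L^{ss}_h(0)$, the generalized $L_h(0)$-eigenspaces coincide with the $L^{ss}_h(0)$-eigenspaces and inherit the finite-dimensionality and positivity of real part established in the style of Lemmas \ref{lhspec} and \ref{findim}. (A semisimple $h$ may furthermore be conjugated into $\mf{h}$, which is what legitimizes the displayed eigenvalue computation on $M(1)\otimes\bb{C}[L-\lambda]$.) Note this reduction is not cosmetic: the paper's definition of ordinary module requires a grading by honest $L(0)$-eigenvalues, so for $h^{n}\neq 0$ the statement itself must be read through this reduction rather than waved off. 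With the Jordan-decomposition step restored, your argument matches the paper's proof.
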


\quad Before beginning the proof of Theorem \ref{thm4}, we recall some details about partition functions.  For any coset $L-h$ of $L$ in $H$, we define the formal sum
\[\theta_{L-h}(q) = \sum_{\alpha \in L-h}q^{\left<\alpha,\alpha\right>/2} = q^{\left<h,h\right>/2}\sum_{\alpha\in L}q^{\left<\alpha,\alpha\right>/2 - \left<h,\alpha\right>}\]

We define the partition function of a $V_L^h$-module $M$ as
\[Z_{M,V_L^h}(q) = Tr_{M}q^{L_h(0) - c_h/24}.\]
This expression is to be treated only as a formal sum, since it may contain complex powers of $q$.  Since we are primarily interested in the spectrum of $L_h(0)$  we only need to consider the operator $L^{ss}_h(0) = L(0) - h^{ss}(0)$.  Therefore, we assume that $h^n=0$ (see the proof of Theorem \ref{pvoa}).  For $\lambda\in L^{\circ}$, we consider the partition function of the $V_L^h$-module $V[L-\lambda]$.  If $u\in M(1)(n)$ and $\alpha \in L-\lambda$, we have
\begin{equation}
(L(0)-h(0))(u\otimes e^{\alpha}) = \left(n +\frac{1}{2}\left< \alpha,\alpha\right>-\left<h,\alpha\right>\right)(u\otimes e^{\alpha}).
\end{equation}
As a formal sum, we have that \[Tr_{V[L-\lambda]} q^{L_h(0)} = \sum_{\mu\in \bb{C}}dim(V[L-\lambda](\mu))\;q^{\mu},\] where $V[L-\lambda](\mu)$ is the $L_h(0)$-eigenspace with eigenvalue $\mu$.  Then we calculate:
\begin{eqnarray*}
Z_{V[L-\lambda],V_L^h} &=& Tr_{V[L-\lambda]}q^{L_h(0) - c_h/24}\\
&=&Tr_{V[L-\lambda]}q^{L(0)-h(0) - k/24 + \left<h,h\right>/2}\\
&=&Z_{M(1),V_L}(q)\cdot Tr_{V[L-\lambda]}q^{L(0)-h(0) + \left<h,h\right>/2}\\
&=&Z_{M(1),V_L}(q)\cdot q^{\left<h,h\right>/2}\sum_{\alpha\in\bb{C}[L-\lambda]}q^{\left<\alpha,\alpha\right>/2 - \left<h,\alpha\right>}\\
&=&Z_{M(1),V_L}(q)\cdot\theta_{L-h-\lambda}(q) 
\end{eqnarray*}

\emph{Proof of Theorem \ref{thm4}.} Let $M$ be any weak $V_L^h$-module.  Since $V_L^h$ has the same modes as $V_L$, it follows that $M$ is a weak module for $V_L$.  Due to the regularity of $V_L$, $M$ must decompose as a direct sum of simple ordinary $V_L$-modules.  By \cite{D} we know that each simple ordinary $V_L$-module must be of the form $V_{L-\lambda}$ for some $\lambda\in L^{\circ}$.  Thus, $M$ decomposes as a sum of simple weak $V_L^h$-modules, each of which is of the form $V_{L-\lambda}$.  Moreover, we calculated above that the partition function of $V_{L-\lambda}$ as a $V_L^h$-module satisfies
\[Z_{V[L-\lambda],V_L^h} =Z_{M(1),V_L}(q)\cdot\theta_{L-h-\lambda}(q) ,\]
and this is sufficient to show that $V_{L-\lambda}$ satisfies the grading requirements for ordinary $V_L^h$-modules.  Thus, $M$ is a sum of simple ordinary $V_L^h$-modules.\hfill$\square$

\bigskip
\begin{cor}  $A(V_L^h)$ is a finite dimensional semisimple algebra.\end{cor}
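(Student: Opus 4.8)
The plan is to deduce the semisimplicity of $A(V_L^h)$ from the regularity of $V_L^h$ (Theorem \ref{thm4}) together with the bijection between simple $A(V)$-modules and simple admissible $V$-modules established in Theorem 3.18. The finite-dimensionality should follow from the $C_2$-cofiniteness of the underlying vertex algebra $V_L$, which forces $V_L^h$ to be finitely generated and hence $A(V_L^h) = V/O(V)$ to be finitely generated over $\mathbb{C}$; I would cite the standard fact (as in \cite{Z} or \cite{DLM2}) that $C_2$-cofiniteness gives finite-dimensionality of the Zhu algebra, noting that $A(V_L^h)$ coincides with $A(V_L)$ as a vector space since $V_L^h$ and $V_L$ share the same underlying vertex algebra and the star/circle products computed in Section 3 depend only on the $\mathbb{C}$-grading.

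First I would argue that $A(V_L^h)$ is semisimple by invoking the general principle that a finite-dimensional associative algebra is semisimple if and only if every finite-dimensional module is completely reducible (equivalently, if its regular representation is completely reducible). Theorem \ref{thm4} asserts that $V_L^h$ is regular, i.e.\ every weak $V_L^h$-module is a direct sum of simple ordinary modules; in particular the category of admissible $V_L^h$-modules is semisimple. Under the functors $\Omega$ and $\Lambda$ of Section 3, which give mutually inverse bijections on isomorphism classes of simple objects, this semisimplicity should transfer to the category of $A(V_L^h)$-modules. Concretely, I would show that $\Omega$ carries the decomposition of an admissible module into simples to a decomposition of the corresponding $A(V_L^h)$-module into simples, so that every $A(V_L^h)$-module is completely reducible, whence $A(V_L^h)$ is semisimple by the converse direction of Wedderburn's theorem.

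The main obstacle I anticipate is making the transfer of complete reducibility fully rigorous at the level of \emph{all} finite-dimensional $A(V_L^h)$-modules, not merely the simple ones. The bijection of Theorem 3.18 is stated only for \emph{simple} modules, so I cannot directly conclude that $\Omega$ and $\Lambda$ are an equivalence of categories respecting arbitrary direct sums. The cleanest route around this is to show that $A(V_L^h)$ has no nonzero nilpotent ideals (equivalently, trivial Jacobson radical): if $N$ is the radical, then $N$ would annihilate every simple $A(V_L^h)$-module, but since the simple modules are exactly the vacuum spaces $\Omega(M_i)$ of the finitely many simple admissible $V_L^h$-modules $M_i$ arising from the regular module $V_L^h$, and since these separate points of $A(V_L^h)$, one forces $N=0$. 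Alternatively, one can use that the regular admissible module $V_L^h$ decomposes as a finite direct sum of simple ordinary modules and apply $\Omega$ to recover a faithful semisimple module over $A(V_L^h)$; faithfulness plus semisimplicity of a finite-dimensional algebra's module forces semisimplicity of the algebra.

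I would organize the write-up as: (i) finite-dimensionality of $A(V_L^h)$ via $C_2$-cofiniteness; (ii) identification of the complete set of simple $A(V_L^h)$-modules with $\{\Omega(V_{L-\lambda})\}$ using Theorem 3.18 and Theorem \ref{thm4}; (iii) construction of a faithful completely reducible $A(V_L^h)$-module, or directly the vanishing of the radical; and (iv) conclusion via Wedderburn. The only genuinely delicate point is step (iii)—ensuring faithfulness, which amounts to checking that the zero modes $o(a)$ for $a$ ranging over $A(V_L^h)$ act nontrivially enough on $\bigoplus_\lambda \Omega(V_{L-\lambda})$; this should follow because $V_L^h$ itself appears among the summands and $O(V_L^h)$ is precisely the annihilator of the vacuum spaces by Theorem 3.11.
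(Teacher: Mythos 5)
Your overall strategy (regularity plus the Zhu machinery of Section 3) is the right one, and you correctly identified the danger spot: the bijection of functors is stated only on simple objects. But your proposed repair of that spot is circular. For a finite-dimensional algebra $A$, the Jacobson radical \emph{is} the intersection of the annihilators of the simple $A$-modules, so the assertion that the modules $\Omega(V_{L-\lambda})$ ``separate points of $A(V_L^h)$'' is literally equivalent to the vanishing of the radical — it is the thing to be proved, not an input. Likewise, faithfulness of $\bigoplus_{\lambda}\Omega(V_{L-\lambda})$ is equivalent to radical zero, and your justification for it fails: Theorem 3.11 shows only that $O(V)$ is \emph{contained in} the annihilator of the vacuum spaces, not that it equals it (the gap between the two is exactly the radical). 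Note also that $V_L^h$ is simple as a module over itself (since $V_L$ is simple), so decomposing the adjoint module buys you a single simple summand, not a faithful semisimple module. The missing idea is to run the induction functor of Section 3 on an \emph{arbitrary} $A(V_L^h)$-module $U$: form $\bar{M}(U)$, a weak module with $\bar{M}(U)(0)=U$; by Theorem \ref{thm4} it is a direct sum of simple ordinary modules $M_i$; since the projections onto the summands commute with all modes, they carry lowest weight vectors to lowest weight vectors, so $U$ embeds as an $A(V_L^h)$-submodule of $\bigoplus_i \Omega(M_i)$, which is semisimple by Proposition \ref{prop3.12}. Hence every $A(V_L^h)$-module is semisimple, which is Wedderburn's criterion. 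The paper prints no proof of the corollary, but this is the standard argument (cf.\ \cite{DLM2}, \cite{Z}) that Theorem \ref{thm4} together with Section 3 is set up to deliver.

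Your finite-dimensionality step also contains an error: $A(V_L^h)$ does \emph{not} coincide with $A(V_L)$ as a vector space. The circle and star products, and hence $O(V)$, depend on the $\mathbb{C}$-grading, and the grading of $V_L^h$ (by $L_h(0)$-eigenvalues) differs from the integral grading of $V_L$: the exponent $\wt{a}$ is the ceiling of the real part of the \emph{shifted} weight, and by Lemma 3.1 every $V^r$ with $r\neq 0$ lies in $O(V_L^h)$, which has no counterpart in $O(V_L)$. Moreover the bound $\dim A(V)\leq \dim V/C_2(V)$ is proved in the literature only for $\mathbb{Z}_{\geq 0}$-graded VOAs and is not established in this $\mathbb{C}$-graded setting, so citing it is not legitimate here. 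Instead, finite-dimensionality falls out of the semisimplicity argument for free: take $U=A(V_L^h)$, the regular module, in the embedding above. It is cyclic (generated by the image of $\mathbb{1}$), and a cyclic submodule of a direct sum lies in finitely many summands; the simples occurring are among the $\Omega(V_{L-\lambda})$ with $\lambda$ ranging over the finite set $L^{\circ}/L$, and each such vacuum space is finite-dimensional because $V_{L-\lambda}$ is an ordinary module. Hence $A(V_L^h)$ embeds in a finite direct sum of finite-dimensional simple modules and is finite-dimensional.
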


\

\begin{thebibliography}{XXXX}

\bibitem[AM]{AM} D. Adamovic and A. Milas, On the triplet vertex algebra W(p), \emph{Adv. Math.} \textbf{217}, 2664-2699 (2008).

\bibitem[D]{D} C. Dong, Vertes algebras associated with even lattices, \emph{J. Alg.} \textbf{117} (1988), 441-450.

\bibitem[DM]{DM} C. Dong, G. Mason, Shifted vertex operator algebras, \emph{Math. Proc. Camb. Phil. Soc.} \textbf{141} (2006), 67-80.

\bibitem[DLM1]{DLM1} C. Dong, H. Li, G. Mason,
Regularity of rational vertex operator algebras,
\emph{Adv. Math.} \textbf{132} (1997), 148-166.


\bibitem[DLM2]{DLM2} C. Dong, H. Li, G. Mason, Twisted representations of vertex operator algebras,
\emph{Math. Ann.} \textbf{310} (1998), 571-600.


\bibitem[FLM]{FLM} I. Frenkel, J. Lepowsky, A. Meurman,
Vertex Operator Algebras and the Monster, \emph{Pure and Appl. Math.} \textbf{Vol. 134}, Academic Press, Boston, 1988.

\bibitem[K]{K} V. Kac,
Vertex Algebras for Beginners,
\emph{University Lecture Series} \textbf{Vol. 10} Amer. Math. Soc (1998).

\bibitem[KM]{KM} M. Krauel and G. Mason, Vertex operator algebras and weak Jacobi forms, 
Intnl.\ J.\ Math.\ Vol. \textit{23} No. 6 (2012), 1250024-1250034. 

\bibitem[L]{L} B. Lian, On the classification of simple vertex operator algebras, \emph{Commun. Math. Phys.} \textbf{163} (1994), 307-357.

\bibitem[LL]{LL} J.\ Lepowsky and H.\ Li,  \textit{Introduction to Vertex Operator Algebras and
Their Representations}, Progress in Mathematics Vol.\ \textbf{227}, Birkh\"{a}user, Boston, 2004.

\bibitem[M]{M} G. Mason, Lattice subalgebras of strongly regular vertex operator algebras, \emph{arXiv:1110.0544}, (2011).

\bibitem[Z]{Z} Y. Zhu, Modular invariance of characters of vertex operator algebras,
 \emph{J. Amer. Math. Soc.} \textbf{9}, (1978), 237-302.

\end{thebibliography}
\end{document}